\documentclass{amsart}

\usepackage[latin1]{inputenc}
\usepackage[T1]{fontenc}
\usepackage{enumerate}

\newtheorem{theorem}{Theorem}[section]
\newtheorem{lemma}[theorem]{Lemma}
\newtheorem{cor}[theorem]{Corollary}
\newtheorem{prop}[theorem]{Proposition}

\theoremstyle{definition}
\newtheorem{definition}[theorem]{Definition}
\newtheorem{example}[theorem]{Example}
\newtheorem{problem}{Problem}
\newtheorem{question}{Question}

\theoremstyle{remark}
\newtheorem{remark}[theorem]{Remark}

\numberwithin{equation}{section}

\newcommand{\N}{\mathbb N}

\newcommand{\R}{\mathbb R}
\newcommand{\Z}{\mathbb Z}
\newcommand{\C}{\mathbb C}

\title{Existence of common and upper frequently hypercyclic subspaces}
\author[J. B\`{e}s, Q. Menet]{Juan B\`{e}s and Quentin Menet}
\address{Juan B\`{e}s, Department of Mathematics and Statistics,
Bowling Green State University,
Bowling Green, Ohio 43403,
USA}
\email{jbes@bgsu.edu}
\address{Quentin Menet, D\'{e}partement de Math\'{e}matique, Universit\'{e} de Mons, 20 Place du Parc, 7000 Mons, Belgique}
\email{quentin.menet@umons.ac.be}
\thanks{The first author is partially supported by MEC and FEDER Projects MTM2007-64222 and 
MTM2010-14909.  The second author is supported by a grant of FRIA}
\date{August 23rd, 2014}
\subjclass[2010]{Primary 47A16}
\keywords{Common hypercyclic vectors; Hypercyclic  subspaces; Frequently hypercyclic operators}

\allowdisplaybreaks[3]
\begin{document}
\begin{abstract}
We provide criteria for the existence of upper frequently hypercyclic subspaces and for common hypercyclic subspaces, which include the following consequences. There exist frequently hypercyclic operators with upper-frequently hypercyclic subspaces and no frequently hypercyclic subspace. 
On the space of entire functions,  each differentiation operator induced by a non-constant polynomial supports an upper frequently hypercyclic subspace, and the family of its non-zero scalar multiples has a common hypercyclic subspace.  A question of Costakis and Sambarino on the existence of a common hypercyclic subspace for a certain uncountable family of weighted shift operators is also answered.
\end{abstract}
\maketitle
\section{Introduction}
Throughout this paper, $X$ and $Y$ denote separable infinite-dimensional Fr\'{e}chet spaces, and $L(X,Y)$ denotes the space of continuous linear operators from $X$ to $Y$. When $Y=X$, we let $L(X)=L(X,X)$. A sequence $(T_n)$ in $L(X,Y)$ is said to be {\em hypercyclic } provided there is some $x$ in $X$ for which the set $\{ T_nx: \ n\ge 0\}$ is dense in $Y$. Such vector $x$ is called a {\em hypercyclic vector} for $(T_n)$.
 An operator $T$ in $L(X)$ is said to be hypercyclic whenever its sequence of iterates $(T^n)$ is a hypercyclic sequence.

An important question about a hypercyclic operator is whether it supports any infinite-dimensional {\em closed} subspace in which every non-zero vector is hypercyclic. Such a subspace is called a \emph{hypercyclic subspace}. This notion is interesting --see \cite[Chapter 8]{BayartMatheron} or \cite[Chapter 10]{Grosse}-- because some hypercyclic operators do not support hypercyclic subspaces while some other operators do. Indeed, Montes \cite{Montes} showed that no scalar multiple of the backward shift on $\ell^p$ ($1\le p<\infty$)  supports a hypercyclic subspace. On the other hand, the collective work of Bernal and Montes \cite{Bernal}, Petersson \cite{Petersson}, Shkarin \cite{ShkarinD} and Menet \cite{Menet1} shows that each non-scalar convolution operator on the space $H(\mathbb{C})$ of entire functions has a hypercyclic subspace. Convolution operators on $H(\C)$ are those that commute with the operator $D$ of complex differentiation, and are precisely those of the form
\[
\phi (D): H(\C)\to H(\C ), \ f\mapsto \sum_{n\ge 0} a_n D^nf,
\]
where $\phi (z)=\sum_{n\ge 0} a_n z^n \in H(\C )$ is of exponential type \cite{Godefroy}.

\subsection{Two sufficient criteria for the existence of hypercyclic subspaces}  \label{Ss:Hcs}
Several criteria for the existence or the non-existence of hypercyclic subspaces are known; we will be  
particularly interested in the next two theorems. We first recall the following.
\begin{definition}(Le\'{o}n and M\"{u}ller \cite{Leon})\label{def C fre}
 A sequence $(T_n)$ in $L(X,Y)$ satisfies \emph{Condition} (C)  along a given strictly increasing sequence  $(n_k)$ of positive integers provided
\begin{enumerate}
\item[(1)]\ For each $x$ in some dense subset of $X$ we have $T_{n_k}x\to 0$, and
\item[(2)]\  For each continuous seminorm $p$ on $X$, $\bigcup_k T_{n_k}(p^{-1}[0,1))$ is dense in $Y$.
\end{enumerate}
\end{definition}
First formulated on the Banach space setting, Condition (C) ensures by  a standard Baire Category argument that the sequence $(T_n)$ has a residual set of hypercyclic vectors whenever $Y$ is separable \cite{Menet1}.

\begin{theorem}[{\bf Criterion ${\bf M_0}$ }\cite{Menet1}]\label{thm M0}
Let  $(T_n)$ be a sequence in $L(X,Y)$ satisfying Condition~$(C)$ along a sequence $(n_k)$.  Suppose that $X$ supports a continuous norm and that there exists an infinite-dimensional closed subspace $M_0$ of $X$ such that \[T_{n_k}x\rightarrow 0 \ \ \mbox{ for all $x\in M_0$.}\] Then $(T_n)$ has a hypercyclic subspace.
\end{theorem}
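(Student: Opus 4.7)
The goal is to construct a closed infinite-dimensional subspace $M\subseteq X$ in which every non-zero vector is hypercyclic for $(T_n)$.  I would realise $M$ as the closed linear span of a basic sequence $(f_k)$ obtained by perturbing a basic sequence in $M_0$.

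First, using the continuous norm on $X$ together with the infinite-dimensionality of $M_0$, I would extract a normalised basic sequence $(e_k)_{k\ge 1}\subseteq M_0$.  Letting $(p_m)$ be an increasing family of seminorms defining the topology of $Y$, after passing to a further subsequence of $(n_k)$ and relabelling I would arrange
\[p_m(T_{n_k}e_j)\le 2^{-k}\quad\text{whenever}\quad k\ge \max(j,m),\]
which is possible since $T_{n_k}e_j\to 0$ in $Y$ for each $j$.  By continuity and linearity, $T_{n_k}z\to 0$ for every $z$ in the closed linear span of $\{e_j:j\ge 1\}\subseteq M_0$.

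Next I would fix a countable basis $(V_j)_{j\ge 1}$ of non-empty open subsets of $Y$ and a countable set $D$ of finite scalar tuples dense in $\bigcup_n\K^n$.  I would then enumerate a countable list of tasks $\tau=(j,q,s)$ with $q\in D$ and $s\in\N$, and at step $\tau$ use Condition~$(C)(2)$ to pick a time $N_\tau$ in $(n_k)$ far beyond those selected before together with a small-norm vector $v_\tau\in X$ such that $T_{N_\tau}(\sum_i q_i e_i+v_\tau)$ lies within $1/s$ of some point of $V_j$.  Accumulating the $v_\tau$'s into coordinate-indexed perturbations $u_k$ (assigning each $v_\tau$ to the index equal to the length of its tuple $q$, and scaling so that $\|u_k\|\le 2^{-k}$), I would set $f_k:=e_k+u_k$.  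The small-perturbation principle then makes $(f_k)$ a basic sequence equivalent to $(e_k)$, and $M:=\overline{\mathrm{span}}\{f_k:k\ge 1\}$ is a closed infinite-dimensional subspace of $X$.

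The main obstacle is verifying that every non-zero $x=\sum_k a_k f_k\in M$ is hypercyclic for $(T_n)$.  Fixing an open $V_j\subseteq Y$ and the first non-zero index $k_0$ of $x$, I would approximate the initial segment $(a_1,\ldots,a_n)$ by some $q\in D$ for a suitable $n\ge k_0$, and consider the corresponding task $\tau=(j,q,s)$.  At time $N_\tau$, the head $T_{N_\tau}(\sum_{k\le n}q_kf_k)$ is close to a point of $V_j$ by construction; the tail $T_{N_\tau}(\sum_{k>n}a_kf_k)$ is controlled by combining the fast decay from step one (for the $e_k$ part, which lies in $M_0$) with an a-priori bound built into the construction ensuring that $T_{N_\tau}u_k$ is negligible for every task $\tau$ committed before $u_k$ was fixed.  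The technical heart is the bookkeeping that reconciles the summability $\sum\|u_k\|<\infty$ needed for the small-perturbation principle with the requirement that $T_{N_\tau}u_k$ stays small for all later task times $N_\tau$; this forces a scheduling in which at step $k$ one processes only tasks whose accuracy and seminorm indices are bounded by $k$, the tolerance halving geometrically.
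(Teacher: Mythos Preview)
The overall architecture---perturb a basic sequence $(e_k)\subseteq M_0$ by small vectors $u_k$ to obtain $(f_k)$, and argue that every non-zero element of $\overline{\mathrm{span}}\{f_k\}$ is hypercyclic---is the standard one; the paper does not prove Theorem~\ref{thm M0} (it is cited from~\cite{Menet1}), but the constructive proof of the more general Theorem~\ref{CritM0} follows exactly this template.  The gap in your proposal is in the way the perturbations are indexed and in how they enter the verification step.

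You index the perturbations $v_\tau$ by triples $(j,q,s)$ with $q$ a finite scalar tuple, place $v_\tau$ inside $u_{|q|}$, and then claim that for $x=\sum_k a_k f_k$ one can choose $q\approx(a_1,\dots,a_n)$ and have $T_{N_\tau}\bigl(\sum_{k\le n} q_k f_k\bigr)$ land near $V_j$ ``by construction''.  But $\sum_{k\le n} q_k f_k=\sum_{k\le n}q_k e_k+\sum_{k\le n}q_k u_k$, and the copy of $v_\tau$ sitting inside $u_n$ now appears with coefficient $q_n$ (times whatever scaling factor you imposed to force $\|u_n\|\le 2^{-n}$), not with coefficient~$1$.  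Your construction arranged $T_{N_\tau}\bigl(\sum_i q_ie_i+v_\tau\bigr)\in V_j$, not $T_{N_\tau}\bigl(\sum_i q_ie_i+q_n c\,v_\tau\bigr)\in V_j$, and there is no mechanism tying $q_n c$ to~$1$.  A second, related problem: even granting the previous point, passing from $\sum_{k\le n}q_kf_k$ to $\sum_{k\le n}a_kf_k$ costs an error $T_{N_\tau}\bigl(\sum_{k\le n}(a_k-q_k)f_k\bigr)$ which you cannot make small, since $N_\tau$ (and hence the operator norm of $T_{N_\tau}$) was fixed at construction time, long before you chose how well $q$ approximates $(a_k)$.

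The fix used in the paper (see the proof of Theorem~\ref{CritM0}) is to drop the scalar tuples entirely.  One indexes the perturbations by pairs $(i,j)$---coordinate and target---builds $z_{i,j}$ small with $T_n z_{i,j}\approx y_j$ at a time $n$ chosen at step $(i,j)$, and sets $z_i=u_i+\sum_j z_{i,j}$.  Given a non-zero $x=\sum_i\alpha_i z_i$, one \emph{rescales} so that $\alpha_k=1$ for some fixed~$k$; then the relevant perturbation $z_{k,r}$ sits in the expansion of $x$ with coefficient exactly~$1$, and no approximation of the coefficients---hence no uncontrolled operator norm---is ever needed.  The scheduling issues you correctly anticipate in your last paragraph are real, but they live on the diagonal order $\prec$ on $\N\times\N$, not on a set of dense scalar tuples.
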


\begin{definition}  Let $(M_n)$ be a sequence of infinite-dimensional closed subspaces of $X$ with $M_n\supseteq M_{n+1}$ for all $n$.
A sequence $(T_n)$  in $L(X, Y)$ is said to be {\em equicontinuous along $(M_n)$} provided  for each continuous seminorm $q$ on $Y$ there exists a continuous seminorm $p$ of $X$ so that for each $n\in\mathbb{N}$ 
\[
q(T_nx) \le p(x) \ \ \mbox{ for each $x\in M_n$.}
\]
\end{definition}

\begin{theorem}[ {\bf Criterion ${\bf (M_k)}$} \cite{Menet1}]\label{thm Mk}
Let $(T_n)$ be a sequence in $L(X,Y)$ satisfying Condition $(C)$ along a sequence $(n_k)$. Suppose that $X$ supports a continuous norm and that 
$(T_{n_k})$ is equicontinuous along some non-increasing sequence $(M_k)$ of closed, infinite-dimensional subspaces of $X$.
Then $(T_n)$ has a hypercyclic subspace.
\end{theorem}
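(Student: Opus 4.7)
The plan is to build the hypercyclic subspace as the closed span of a basic sequence $(u_k)$ with $u_k\in M_k$. The equicontinuity along $(M_k)$ will automatically control the tails of expansions in this span, so the core task is to arrange that finite heads can be mapped near prescribed targets in $Y$ by suitable iterates $T_{n_K}$.

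Since $X$ supports a continuous norm $\|\cdot\|$, a Mazur-type selection produces basic sequences with basis constant $\le 2$ inside any nested sequence of closed, infinite-dimensional subspaces of $X$. I would fix such a norm, a dense sequence $(y_l)\subseteq Y$, and a fundamental family $(q_m)$ of continuous seminorms on $Y$; for each $q_m$, let $p_m$ denote the continuous seminorm on $X$ provided by the equicontinuity, so that $q_m(T_{n_k}z)\le p_m(z)$ for every $k\in\mathbb{N}$ and every $z\in M_k$. I would then enumerate a countable list of tasks $\tau=(l,m,N,(r_j))$, with $(r_j)$ a rational tuple of finite support, each task being addressed at infinitely many stages of the induction.

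At stage $k$ addressing task $(l,m,N,(r_j))$, I would apply Condition~(C)(2) with seminorm $p_m$ to find an index $K\le k+1$ and a vector $v\in p_m^{-1}[0,1)$ with $T_{n_K}v$ within $1/(2N)$ of $y_l-\sum_{j<k}r_jT_{n_K}u_j$ in $q_m$ (since each task is revisited at arbitrarily large $k$, the constraint $K\le k+1$ eventually becomes non-binding); then apply Condition~(C)(1) to perturb $v$ by a vector of small $T_{n_K}$-image so that the perturbed vector is approximately $\alpha u$ with $u\in M_k$ of unit norm, and set $u_k=u$ after a Mazur-type refinement preserving the basic-sequence constant. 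This ensures that $T_{n_K}(\sum_{j<k}r_ju_j+\alpha u_k)$ is within $1/N$ of $y_l$ in $q_m$. For the verification, I set $E=\overline{\mathrm{span}}(u_k)$ and fix $x=\sum a_ju_j\in E\setminus\{0\}$, $y\in Y$, a seminorm $q\preceq q_m$, and $\epsilon>0$; I then select a stage $k$ with task $(l,m,N,(r_j))$ where $y_l\approx y$, $1/N<\epsilon/3$, $r_k=\alpha$, and $(r_j)_{j<k}$ approximates $(a_j)_{j<k}$ closely enough that $q_m(T_{n_K}\sum_{j\le k}(a_j-r_j)u_j)<\epsilon/3$. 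The tail $\sum_{j>k}a_ju_j\in M_{k+1}\subseteq M_K$ yields $q_m(T_{n_K}(\mathrm{tail}))\le p_m(\mathrm{tail})<\epsilon/3$ by equicontinuity and the basic-sequence property. Combining the three estimates gives $q_m(T_{n_K}x-y)<\epsilon$, so every nonzero $x\in E$ is hypercyclic.

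The main obstacle is the inductive step: I must simultaneously realize the prescribed head approximation (via Condition~(C)(2)), translate the vector produced by Condition~(C)(2) into $M_k$ (using Condition~(C)(1) and the infinite-dimensionality of $M_k$), extend the basic sequence with bounded basis constant, and respect the arithmetic compatibility $K\le k+1$ needed for equicontinuity to apply to the tail. Orchestrating Conditions~(C)(1)--(2) together with the nested structure $(M_k)$ is the technical heart of the argument, and it is what distinguishes Criterion~${\bf (M_k)}$ from the simpler Criterion~${\bf M_0}$, where only pointwise convergence on a single subspace is required.
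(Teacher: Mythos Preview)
Your inductive step contains a genuine gap. You want to select $u_k\in M_k$ of unit norm so that $T_{n_K}(\alpha u_k)$ is (approximately) a \emph{prescribed} vector, namely $y_l-\sum_{j<k}r_jT_{n_K}u_j$, for some index $K\le k+1$. Nothing in the hypotheses makes this possible. Condition~(C)(2) does produce $K$ and some $v\in p_m^{-1}[0,1)$ with $T_{n_K}v$ near a target---but note first that your target depends on $K$, so this is not a straight application of~(C)(2), and more importantly $v$ has no reason to lie in or near $M_k$. Your proposed fix, ``apply Condition~(C)(1) to perturb $v$ by a vector of small $T_{n_K}$-image so that the perturbed vector is approximately $\alpha u$ with $u\in M_k$,'' cannot work: Condition~(C)(1) is an asymptotic statement ($T_{n_k}x\to 0$ as $k\to\infty$ on a dense set) and says nothing about the kernel or near-kernel of a \emph{fixed} $T_{n_K}$; and even if one could locate perturbations with small $T_{n_K}$-image, there is no reason any of them would carry $v$ into a scalar multiple of $M_k$. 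The only control you have on $T_{n_K}|_{M_k}$ is the equicontinuity bound $q_m(T_{n_K}u)\le p_m(u)$, which is an upper estimate, not a density or surjectivity statement. Consequently the subspace $\overline{\mathrm{span}}\{u_k:u_k\in M_k\}$ that you build has its tails controlled but carries no hypercyclicity mechanism at all.

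The route actually taken (see the construction in the proof of Theorem~\ref{NiceMn} here, and the direct construction in Theorem~\ref{CritM0}) is essentially the reverse of yours: one fixes a basic sequence $(u_i)$ with $u_i\in M_i$ \emph{at the outset}, and then perturbs each $u_i$ by a series of small vectors $z_{i,j}$ lying \emph{outside} $M_i$, setting $z_i=u_i+\sum_j z_{i,j}$. It is the perturbations $z_{i,j}$---chosen via Condition~(C)(2) to be small in successively stronger seminorms while having $T_{n_K}z_{i,j}$ near prescribed $y_j$---that carry the hypercyclicity; the hypercyclic subspace is $\overline{\mathrm{span}}(z_i)$, not $\overline{\mathrm{span}}(u_i)$. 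In the verification one splits $T_{n_K}z=T_{n_K}(\sum_i\alpha_iu_i)+\sum_{i,j}\alpha_iT_{n_K}z_{i,j}$: the tail of $\sum_i\alpha_iu_i$ lies in some $M_k$ and is handled by equicontinuity, its head is handled because the partial perturbations were arranged to lie in the dense set from~(C)(1), a single $z_{k,r}$-term supplies the approximation to the target, and the remaining $z_{i,j}$-terms are negligible by construction. In short, the $u_i\in M_i$ serve only for tail control; the approximation to targets is done entirely by the external perturbations, which your scheme omits.
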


The first versions of Criterion $M_0$ and Criterion $(M_k)$ appeared under the stronger assumption of satisfying the Hypercyclicity Criterion instead of Condition $(C)$.
Criterion $M_0$ first appeared in 1996 
for the case of operators on Banach spaces and is due to Montes~\cite{Montes}; this was generalized to operators on Fr\'{e}chet spaces with a continuous norm  by Bonet, Mart\'inez and Peris~\cite{Bonet} and independently by Petersson~\cite{Petersson}, as well as to sequences of operators on Banach spaces by Le\'{o}n and M\"{u}ller~\cite{Leon}. The present version on sequences of operators on Fr\'{e}chet spaces with a continuous norm is due to Menet~\cite{Menet1}. 
Criterion $(M_k)$  first appeared implicitly for the case of operators on Hilbert  and Banach spaces (and where the sequence $(M_k)$ is constant) in the works of  Le\'{o}n and Montes \cite{LeonMontes} and by Gonz\'{a}lez et al \cite{Gonzalez}, respectively; this was extended in 2006 by Le\'on and M\"{u}ller \cite{Leon} to sequences of operators on Banach spaces and more recently by Menet \cite{Menet1} to the present version on Fr\'{e}chet spaces with a continuous norm. Both Criterion $M_0$ and Criterion $(M_k)$ have generalizations to Fr\'{e}chet spaces without continuous norm as well \cite{Menet1bis}.
\vspace{.1in}

\subsection{Two themes about hypercyclic subspaces we consider}

This paper explores a link between Criterion $M_0$ and Criterion $(M_k)$  and its consequences within two themes: subspaces of $\mathcal{U}$-frequently hypercyclic vectors 
and subspaces of common hypercyclic vectors.
The first theme is motivated by recent work of Bonilla and Grosse-Erdmann~\cite{Bonilla2} that initiated the study of {\em frequently hypercyclic subspaces,} that is, of hypercyclic subspaces consisting entirely (but the origin) of frequently hypercyclic vectors. 
The notions of frequently hypercyclic vectors and of $\mathcal{U}$-frequently hypercyclic vectors are due to Bayart and Grivaux~\cite{Bayart04} and to Shkarin~\cite{Shkarin0}, respectively.  For a set $A$ of non-negative integers, the quantities
\[\underline{\text{dens}}(A):=\liminf_N\frac{\#(A\cap [0,N])}{N+1}  \mbox{  and  } \overline{\text{dens}}(A):=\limsup_N\frac{\#(A\cap [0,N])}{N+1} \]  
are respectively called the lower density and the upper density of $A$.

\begin{definition}\label{D:fhc} Let $(T_n)$ be a sequence in $L(X,Y)$. A vector $x\in X$ is said to be {\em frequently hypercyclic} (respectively, $\mathcal{U}$-\emph{frequently hypercyclic}) for $(T_n)$ provided for each non-empty open subset $U$ of $Y$, the set $\{n\ge 0: T_n x\in U\}$ has positive lower density (respectively, positive upper density).
\end{definition}

Bonilla and Grosse-Erdmann \cite{Bonilla3, Bonilla2} showed that each non-scalar convolution operator $\phi (D)$ on $H(\C )$  is frequently hypercyclic and that it has  a frequently hypercyclic subspace whenever $\phi\in H(\mathbb{C})$ is transcendental, and asked whether the derivative operator $D$ has a frequently hypercyclic subspace as well \cite[Problem 2]{Bonilla2}. We will consider here the following related question.

\begin{question} \label{Q:P(D)ufhcs}
Does the derivative operator have a $\mathcal{U}$-frequently hypercyclic subspace on $H(\mathbb{C})$? What about $P(D)$, where
$P$ is a non-constant polynomial?
\end{question}

The next question is motivated by recent work of Bayart and Ruzsa \cite{Bayart3}, who completely characterized frequent hypercyclicity and $\mathcal{U}$-frequent hypercyclicity among unilateral and bilateral weighted shift operators on $\ell^p$ and $\ell^p(\Z )$ $(1\le p<\infty )$, respectively, as well as on  $c_0$ and $c_0(\mathbb{Z})$, respectively. Hence it is natural to ask:

\begin{question} \label{Q:B_wUfhcs}
Which weighted shifts support a $\mathcal{U}$-frequently hypercyclic subspace?
\end{question}

\vspace{.05in}

The second theme we consider is common hypercyclic subspaces. Recall the following.

\begin{definition}
A vector $x\in X$ is called a \emph{common hypercyclic vector} for a given family $\mathcal{F}=\{ (T_{n,\lambda})_{n\ge 0} \}_{\lambda\in \Lambda}$ of sequences in $L(X,Y)$ provided $x$ is a hypercyclic vector for each  $(T_{n,\lambda})_{n\ge 0}$ in $\mathcal{F}$. 
We also say that the common hypercyclic vectors of a given family $\mathcal{F}=\{ T_{\lambda} \}_{\lambda\in \Lambda}$ of operators on $X$ are the common hypercyclic vectors of the family of corresponding sequences of iterates $\{ (T_{\lambda}^n)_{n\ge 0} \}_{\lambda\in \Lambda}$.
A closed, infinite-dimensional subspace consisting entirely (but the origin) of common hypercyclic vectors for $\mathcal{F}$ is called a {\em common hypercyclic subspace} for $\mathcal{F}$. A dense, linear subspace consisting entirely (but the origin) of common hypercyclic vectors for $\mathcal{F}$ is called a {\em common hypercyclic manifold} for $\mathcal{F}$.
\end{definition}
If the family $\mathcal{F}=\{ T_{\lambda} \}_{\lambda\in \Lambda}$ of hypercyclic operators is countable, then it has a residual set of common hypercyclic vectors 
and a common hypercyclic manifold \cite{Grivaux2003}. If $\mathcal{F}$ is uncountable, however, it may fail to have a single common hypercyclic vector, even if each operator $T_\lambda$ has a hypercyclic subspace \cite{Aron}.  The first example of an uncountable family with a common hypercyclic vector was given by Abakumov and Gordon \cite{Abakumov}, who  showed that $\{ \lambda B \}_{\lambda >1}$ has a common hypercyclic vector, where $B$ is the unweighted backward shift on $\ell^2$. The importance of common hypercyclic vectors within linear dynamics is showcased in \cite[Chapter 7]{BayartMatheron} and \cite[Chapter 11]{Grosse}.

In a remarkable paper, Costakis and Sambarino \cite{Costakis2} considered unilateral backward shifts $B_{w_\lambda}$ $(\lambda >1)$ on $\ell^2$ with weight sequence $w_\lambda =(1+\frac{\lambda}{n})_{n\ge 1}$ and
showed that the set of common hypercyclic vectors for the family
\[
\{  B_{w_\lambda }  \}_{\lambda >1 }
\]
is residual on $\ell^2$.  Each such $B_{w_\lambda}$ has a hypercyclic subspace. Hence the following
\begin{question}{ ({\bf Costakis and Sambarino \cite[Problem 3]{Costakis2}})}  \label{Q:Bwchcs}
Does  the family $\{ B_{w_\lambda} \}_{\lambda >1}$ support a common hypercyclic subspace?
\end{question}

We already mentioned that any given non-scalar convolution operator $\phi(D)$ on $H(\C )$ has a hypercyclic subspace. But the family of its (non-zero) scalar multiples
\[
\{ \lambda\, \phi(D) \}_{|\lambda |>0}
\]
has a residual set of common hypercyclic vectors, as established by Costakis and Mavroudis~\cite{Costakis} for the case $\phi$ is a non-constant polynomial, by Bernal~\cite{Bernal2009} for the case $\phi$ is of growth order no larger than $\frac{1}{2}$, and by Shkarin \cite{Shkarin} for the general transcendental case. Hence it is natural to ask.

\begin{question} \label{Q:P(D)common}
Does the family of non-zero scalar multiples of a non-scalar convolution operator on $H(\C )$ support a common hypercyclic subspace?
\end{question}

We mention that Bayart~\cite{Bayart} extended  Criterion~$M_0$ to one that shows the existence of common hypercyclic subspaces (see Theorem~\ref{CritM0}), but this extension has proven difficult to apply in many natural examples.

\subsection{Main results and outline of the paper}
Section~\ref{Gene Mk} is devoted to developing the main tools (Theorem~\ref{NiceMn}) for establishing in the subsequent sections $(M_k)$-type criteria for the existence of $\mathcal{U}$-frequently hypercyclic subspaces or of common hypercyclic subspaces. 

Section~\ref{SecU} is devoted to $\mathcal{U}$-frequently hypercyclic subspaces. We show for instance that any operator on a complex Banach space that satisfies the Frequent Hypercyclicity Criterion (Theorem~\ref{T:FHC}) {\em has a $\mathcal{U}$-frequently hypercyclic subspace if and only if $T$ has a hypercyclic subspace} (Theorem~\ref{T:FHC2ufhcs}). We then answer Question~\ref{Q:P(D)ufhcs}  in the affirmative. Indeed, we show that  {\em for any $N\ge 1$ and any $a_0(z)$,\dots, $a_{N-1}(z)\in H(\C )$ and $0\ne a_N\in\C$ the linear differential operator
\[
L=a_N D^N+ a_{N-1}(z) D^{N-1}+\cdots +a_0(z) I
\]
has a $\mathcal{U}$-frequently hypercyclic subspace on $H(\C )$} (Corollary~\ref{C:ufhsNonConstant}). Concerning Question~\ref{Q:B_wUfhcs}, we show that {\em each $\mathcal{U}$-frequently hypercyclic unilateral weighted backward shift with a hypercyclic subspace on $\ell^p$ has a $\mathcal{U}$-frequently hypercyclic subspace} (Corollary~\ref{caraclp}).
As a consequence, we derive the existence of {\em a frequently hypercyclic operator that has a $\mathcal{U}$-frequently hypercyclic subspace but no frequently hypercyclic subspace} (Corollary~\ref{C:ufhcsNofhcs}). We complement our answer to Question~\ref{Q:B_wUfhcs} by showing that each $\mathcal{U}$-frequently hypercyclic bilateral weighted backward shift with a hypercyclic subspace on $\ell^p(\Z)$ has even a frequently hypercyclic subspace (Theorem~\ref{caraclpbi}).

We obtain the above results after establishing the following criterion for supporting $\mathcal{U}$-frequently hypercyclic subspaces (cf.Theorem~\ref{UM0} and Theorem~\ref{UMk}): {\em If the Fr\'{e}chet space $X$ supports a continuous norm then any $T\in L(X)$ satisfying the Frequent Hypercyclicity Criterion has a $\mathcal{U}$-frequently hypercyclic subspace provided there exists a strictly increasing sequence of positive integers $(n_k)$ with positive upper density satisfying either of the two conditions:
\begin{enumerate}
\item[(a)]\ $(T^{n_k})$ converges pointwise to zero on some closed and infinite-dimensional subspace $M_0$ of $X$, or

\item[(b)]\ $(T^{n_k})$ is equicontinuous along some non-increasing sequence $(M_k)$ of closed infinite-dimensional subspaces of $X$.
\end{enumerate}}

Section~\ref{Sec common} is devoted to common hypercyclic subspaces. We provide with Theorem~\ref{CritM0} a constructive proof of a Criterion $M_0$ due to Bayart \cite{Bayart}  for the existence of common hypercyclic subspaces but on Fr\'{e}chet spaces that support a continuous norm, and use it to establish a corresponding Criterion $(M_k)$ which is simpler to apply (Theorem~\ref{Mk com}). Indeed, we use the latter to answer Question~\ref{Q:Bwchcs}
in the affirmative (Corollary~\ref{Ex:Costa}) and to partially anwser Question~\ref{Q:P(D)common}: {\em for any $N\ge 1$ and any  $a_0(z)$,\dots , $a_{N-1}(z)\in H(\C )$, the family
\[
\{ \lambda L \}_{0\ne \lambda\in\C}
\]
of non-zero scalar multiples of the linear differential operator \[ L=D^n+a_{N-1}(z)D^{N-1}+\cdots +a_0(z)I\] has a common hypercyclic subspace on $H(\C )$} (Corollary~\ref{C:NonConstantCommon}). Finally, we seek in the last subsection spectral characterizations for the existence of common hypercyclic subspaces for certain families of operators, complementing the  spectral characterization by Gonz\'{a}lez, Le\'{o}n and Montes \cite{Gonzalez} for the case of a single operator. In particular, we show that 
{\em  for a complex Banach space $X$ and  $0< a<b\le \infty$, a family of non-zero scalar mutiples of a given $T\in L(X)$
\[
\{ \lambda T \}_{a<\lambda <b}
\]
that satisfies the Common Hypercyclicity Criterion has a common hypercyclic subspace if and only if the essential spectrum of $T$ contains an element of modulus no larger than $\frac{1}{b}$, with the convention that $\frac{1}{b}=0$ when $b=\infty$} (Corollary~\ref{C:a<lambda<b} and Corollary~\ref{C:Spectral}).

We finish this introduction with a brief subsection on the Common Hypercyclicity Criterion, which we use in Section~\ref{Sec common}.

\subsection{The Common Hypercyclicity Criterion} \label{Ss:CHC}
Several criteria have been used to show the existence of common hypercyclic vectors thanks to the works of Costakis and Sambarino \cite{Costakis2}, Bayart and Matheron \cite{Bayart2}, and others  \cite{Bayart2004, BayartGrivaux, Bernal2009,  Costakis, Gallardo, Grosse}. In this paper we use the following version of the Common Hypercyclicity Criterion based on \cite[Remark 11.10]{Grosse}.

\begin{definition}[{\bf Common Hypercyclicity Criterion}]\label{D:CHC}
Let $\Lambda \subseteq \mathbb{R}$ be an open interval. We say that a family $\{ (T_{n,\lambda})_{n\ge 0} \}_{\lambda\in \Lambda}$ of sequences in $L(X,Y)$ satisfies the \emph{Common Hypercyclicity Criterion} (CHC) provided
 for each $(n, x)\in \mathbb{Z}^+\times X$ the vector $T_{n,\lambda}x$ depends continuously on $\lambda\in \Lambda$ and provided  for each compact subset $K$ of $\Lambda$ there exist dense subsets $X_0$ and $Y_0$ of $X$ and $Y$, respectively, and mappings 
\[
S_{n, \lambda}: Y_0\to X \ \ \ (\lambda\in K, \ n=0,1,\dots )
\]
so that for each $y_0\in Y_0$, each $x_0\in X_0$, we have
\begin{enumerate}
\item[(1)]\  $\sum_{k=0}^m T_{m, \lambda} S_{m-k, \mu_k} y_0$ converges unconditionally and uniformly on $\lambda \ge \mu_0\ge \mu_1\ge\dots \ge \mu_m$ in $K$ and $m\ge 0$.

\item[(2)]\  $\sum_{k=0}^\infty T_{m, \lambda} S_{m+k, \mu_k} y_0$ converges unconditionally and uniformly on $\lambda \le \mu_0\le \mu_1\le\dots $ in $K$ and $m\ge 0$.

\item[(3)]\  For each $\varepsilon>0$ and each continuous seminorm $q$ on $Y$, there exists a sequence $(\delta_{k})$ of positive real numbers such that $\sum_{k=0}^{\infty}\delta_k=\infty$ and  for each $k\ge 0$ and $\lambda, \mu\in K$ we have
\[
0\le \mu-\lambda < \delta_{k}  \ \   \Rightarrow \ \ \  q( T_{k, \lambda} S_{k,\mu} y_0 - y_0) < \epsilon.
\]

\item[(4)]\   $T_{k, \lambda} x_0 \xrightarrow[k\to\infty]{} 0$\ uniformly on $\lambda \in K$.

\item[(5)]\ $\sum_{k=0}^\infty \ S_{k, \mu_k} y_0$ converges unconditionally and uniformly on $\mu_0\le \mu_1\le \dots$ in $K$.   
\end{enumerate}
\end{definition} 
\begin{theorem} 
 \label{T:CHC}
Let $\{ (T_{n,\lambda})_n \}_{\lambda\in \Lambda}$ be a family of sequences in $L(X, Y)$ satisfying (CHC).  Then the set of vectors in X that are hypercyclic for every sequence $(T_{n,\lambda})_n$  is residual in $X$.
\end{theorem}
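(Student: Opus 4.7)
My plan is to prove the theorem via the Baire category theorem. Write the open interval $\Lambda$ as a countable increasing union $\Lambda=\bigcup_{j\ge 1}K_{j}$ of compact subintervals, fix a countable basis $(B_{\ell})_{\ell\ge 1}$ of nonempty open balls of $Y$, and for $K\subseteq\Lambda$ compact and $B\subseteq Y$ open define
\[
E(K,B):=\{x\in X:\ \forall\lambda\in K\ \exists n\ge 0,\ T_{n,\lambda}x\in B\}.
\]
Since the set of common hypercyclic vectors coincides with $\bigcap_{j,\ell}E(K_{j},B_{\ell})$, it suffices to show that each $E(K,B)$ is open and dense in $X$.

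Openness follows from a compactness-plus-continuity argument: given $x\in E(K,B)$ and an integer $n_{\lambda}$ with $T_{n_{\lambda},\lambda}x\in B$ for every $\lambda\in K$, the joint continuity of $(\mu,x')\mapsto T_{n_{\lambda},\mu}x'$ (combining continuous dependence on $\lambda$ from CHC with continuity of each $T_{n,\mu}\in L(X,Y)$ in $x$) and openness of $B$ produce a product neighborhood $V_{\lambda}\times W_{\lambda}$ of $(\lambda,x)$ mapped into $B$; a finite subcover of $K$ then yields $\bigcap_{s}W_{\lambda_{s}}\subseteq E(K,B)$.

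Density of $E(K,B)$ is the real work and is where conditions CHC(1)--(5) enter. Fix $U\subseteq X$ open and $K=[a,b]\subseteq\Lambda$ compact; after shrinking the target, assume $B=B_{q}(y_{0},\epsilon)$ for some continuous seminorm $q$ on $Y$ and some $y_{0}\in Y_{0}$. Choose $x_{0}\in X_{0}\cap U$ and let $(\delta_{k})_{k\ge 1}$ be the positive sequence given by CHC(3) for $\epsilon/4$ and $q$. For each $k$ partition $K$ as $a=\lambda_{k,1}<\cdots<\lambda_{k,N_{k}}=b$ with mesh strictly below $\delta_{k}$, and inductively pick a strictly increasing sequence $(m_{k,i})$ of integers grouped into disjoint blocks $M_{k}=\{m_{k,1}<\cdots<m_{k,N_{k}}\}$, ordered so that $i\mapsto\lambda_{k,i}$ is strictly increasing on each $M_{k}$, with $\min M_{k}$ and the gaps between consecutive blocks so large that CHC(4) forces $q(T_{m_{k,i},\lambda}x_{0})<\epsilon/4$ uniformly in $\lambda\in K$ and the tails past $\min M_{k}$ of the uniformly convergent series in CHC(1), CHC(2) and CHC(5) fall below $\epsilon/2^{k+2}$. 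By CHC(5) the series
\[
x:=x_{0}+\sum_{k\ge 1}\sum_{i=1}^{N_{k}}S_{m_{k,i},\lambda_{k,i}}\,y_{0}
\]
converges in $X$ and can be kept inside $U$. For any $\lambda\in K$ the unique $i$ with $\lambda\in(\lambda_{k,i-1},\lambda_{k,i}]$ satisfies $0\le\lambda_{k,i}-\lambda<\delta_{k}$, so CHC(3) places $T_{m_{k,i},\lambda}S_{m_{k,i},\lambda_{k,i}}y_{0}$ within $\epsilon/4$ of $y_{0}$; CHC(4) keeps $T_{m_{k,i},\lambda}x_{0}$ below $\epsilon/4$; the within-block off-diagonal contributions are small via CHC(1) for $i'<i$ (using $\lambda\ge\lambda_{k,i'}$, which holds by the co-monotonicity built in) and CHC(2) for $i'>i$; the cross-block contributions fall into the prescribed tails. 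Summing gives $q(T_{m_{k,i},\lambda}x-y_{0})<\epsilon$, so $x\in E(K,B)\cap U$.

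The main obstacle is aligning cross-block contributions with the monotonicity hypotheses of CHC(1) and CHC(2). Within each block the map $i\mapsto\lambda_{k,i}$ is monotone by design, but across blocks the $\lambda_{k',i'}$ range over $[a,b]$ and need not respect any global monotonic order. The fix is the uniform unconditional convergence built into CHC(1), CHC(2) and CHC(5): by padding the absent indices with a monotonic envelope in $K$, the cross-block sums become sub-sums of uniformly convergent tails whose $q$-size has been made below $\epsilon/2^{k+2}$ through the block separation chosen at step $k$. Combining all estimates and applying the Baire category theorem to $\bigcap_{j,\ell}E(K_{j},B_{\ell})$ produces a dense $G_{\delta}$ of common hypercyclic vectors, which is the residual set promised by the theorem.
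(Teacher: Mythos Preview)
The Baire-category framework and the openness argument are correct, and this is indeed the route the paper takes (it refers to \cite[Theorem~11.9]{Grosse}, and the density step is essentially carried out in the first part of the proof of Theorem~\ref{CritM0}). However, your density argument has a genuine gap.

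The decisive error is in the use of CHC(3). In Definition~\ref{D:CHC}(3) the tolerance $\delta_{k}$ is tied to the operator index~$k$: one obtains $q(T_{k,\lambda}S_{k,\mu}y_{0}-y_{0})<\varepsilon$ only when $0\le\mu-\lambda<\delta_{k}$. You partition $K$ with mesh $<\delta_{k}$ but then apply the operator at a \emph{different} index $m_{k,i}$, concluding that $T_{m_{k,i},\lambda}S_{m_{k,i},\lambda_{k,i}}y_{0}$ is close to $y_{0}$. CHC(3) at the index $m_{k,i}$ demands $\lambda_{k,i}-\lambda<\delta_{m_{k,i}}$, which you have not arranged; since nothing prevents $\delta_{m_{k,i}}$ from being much smaller than $\delta_{k}$, the diagonal estimate fails.

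A second issue is the cross-block bound. CHC(1) controls only subsums of $\sum_{j}T_{m,\lambda}S_{m-j,\mu_{j}}y_{0}$ with $\lambda\ge\mu_{0}\ge\mu_{1}\ge\dots$; in particular every $\mu_{j}$ must lie below $\lambda$. For a cross-block term $T_{m_{k,i},\lambda}S_{m_{k',i'},\lambda_{k',i'}}y_{0}$ with $k'<k$, the parameter $\lambda_{k',i'}$ ranges over all of $[a,b]$ and may exceed $\lambda$; no monotone envelope in $K$ can then contain it, so the padding trick does not apply. The symmetric obstruction arises for CHC(2) when $k'>k$.

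Both problems disappear with the standard \emph{finite} construction used in the paper: pick a single block of indices $k_{1}<\dots<k_{L}$ (with gaps $\ge C$ so that the CHC(1),(2),(5) tails are small) such that $\sum_{l=1}^{L}\delta_{k_{l}}\ge b-a$, which is possible because $\sum_{k}\delta_{k}=\infty$; then set $\lambda_{0}=a$, $\lambda_{l}=\lambda_{l-1}+\delta_{k_{l}}$, and take $x=x_{0}+\sum_{l=0}^{L-1}S_{k_{l+1},\lambda_{l}}y_{0}$. Now the operator index $k_{l+1}$ matches the $\delta_{k_{l+1}}$ used to build the partition, the $\lambda_{l}$'s are increasing and co-monotone with the indices (so CHC(1) handles all past terms and CHC(2) all future terms), and there are no cross-block terms at all.
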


Theorem~\ref{T:CHC} (follows from the same arguments and) slightly generalizes the Common Hypercyclicity Criterion given in \cite[Theorem~11.9, Remark 11.10(d)]{Grosse}.
\begin{remark} \label{R:CHC}\
\begin{enumerate}
\item[(a)]\ If $\{ T_{\lambda} \}_{\lambda \in \Lambda}$ is a family of operators satisfying (the assumptions of) the Common Hypercyclicity Criterion of Costakis and Sambarino~\cite{Costakis2}, then the family $\{ (T_{\lambda}^{n+1})_{n\ge 0} \}_{\lambda\in\Lambda}$ satisfies $(CHC)$ as defined above in Definition~\ref{D:CHC}. 

\item[(b)]\ Given a scalar $\lambda_0\ge 0$ and $T\in L(X)$, the family $\{ (\lambda^n T^n)_{n\ge 1} \}_{\lambda >\lambda_0}$ satisfies $(CHC)$ if there exists a dense subset $A$ of $X$ that is contained in $\cup_{n\ge 1} \mbox{Ker}(T^n)$ and maps $S_n:A\to X$ so that for each $x\in A$ we have that $(i)$  $T^nS_nx=x$, that $(ii)$ $T^mS_{m+n}x=S_nx$ for each $m,n\ge 0$, and that  $(iii)$ $\{\frac{1}{\lambda^n} S_nx \}_{n\ge 1}$ is bounded in $X$ for each $\lambda >\lambda_0$, see \cite[Section 11.2]{Grosse}.
\end{enumerate}
\end{remark}

\section{Main tool for the generalization of Criterion $(M_k)$}\label{Gene Mk}

In this section we develop a link between Criterion $M_0$ and Criterion $(M_k)$. Notice that Criterion $M_0$ immediately gives Criterion $(M_k)$, by the Banach-Steinhaus theorem. Conversely, if a sequence of operators $(T_n)$ satisfies Criterion $(M_k)$ along a given sequence of integers $(n_k)$, then it also satisfies Criterion $M_0$ for some subsequence $(m_k)$ of $(n_k)$ \cite{Menet1}. The section's main result, Theorem~\ref{NiceMn},  allows us to get this subsequence $(m_k)$ to inherit special properties of the original sequence $(n_k)$ (see Lemma~\ref{L:52} and Proposition~\ref{prophered}) which is a key ingredient for establishing  $(M_k)$-type criteria for the existence of $\mathcal{U}$-frequently hypercyclic subspaces or common hypercyclic subspaces from the corresponding $M_0$-type criteria.
We need the following two definitions.

\begin{definition}
We say that $(\Lambda_n)_{n\ge 1}$ is a \emph{chain} of a given set $\Lambda$ if 
the sequence $(\Lambda_n)_{n\ge 1}$ is non-decreasing and $\bigcup_{n\ge 1}\Lambda_n=\Lambda$.
\end{definition}


\begin{definition}  Let $(M_n)$ be a sequence of infinite-dimensional closed subspaces of $X$ with $M_n\supseteq M_{n+1}$ for all $n$.
A family  $\{ (T_{n, \lambda} ) \}_{\lambda\in \Lambda}$ of sequences in $L(X, Y)$ is said to be {\em uniformly equicontinuous along $(M_n)$} provided  for each continuous seminorm $q$ on $Y$ there exists a continuous seminorm $p$ of $X$ so that for each $n\in\mathbb{N}$ 
\[
\mbox{sup}_{\lambda\in\Lambda} q(T_{n, \lambda}x) \le p(x) \ \ \mbox{ for each $x\in M_n$.}
\]
\end{definition}

\begin{theorem}\label{NiceMn} Let  $X$ be an infinite-dimensional Fr\'{e}chet space with continuous norm, let $Y$ be a separable Fr\'echet space and let $\Lambda$ be a set.
Let $\{ (T_{k, \lambda})_{k\ge 1} \}_{\lambda\in\Lambda}$ be a family of sequences of operators in $L(X,Y)$.  Suppose that there exist chains   $(\Lambda^j_n)_{n\ge 1}$ of $\Lambda$ $(j=0,1,2)$  satisfying:
\begin{enumerate}
\item[(i)]\  for each $n\in\mathbb{N}$ and each $k\in \N$, the family
$
\{  T_{k, \lambda} \}_{\lambda\in \Lambda_n^0} \ \mbox{ is equicontinuous;}
$
\item[(ii)]\  for each $n\in\mathbb{N}$, there exists a dense subset $X_{n,0}$ of  $X$ such that for any $x\in X_{n,0}$,
\[
T_{k,\lambda}x\xrightarrow[k\to\infty]{} 0 \ \ \ \mbox{uniformly on $\lambda\in \Lambda_n^1$;}
\]
\item[(iii)]\ there exists a non-increasing sequence of infinite-dimensional closed subspaces $(M_k)$ of $X$ such that for each $n\ge 1$,  the family of sequences
\[
\{   (T_{k,\lambda})_{k\ge 1}\}_{\lambda\in \Lambda_n^2}
\]
is uniformly equicontinuous along $(M_k)$.
\end{enumerate}
Then for any map $\phi:\mathbb{N}\to \mathbb{N}$ there exist an increasing sequence of integers $(k_s)_{s\ge 1}$ and an infinite-dimensional closed subspace $M_0$ of $X$ such that for any $(x,\lambda)\in M_0\times \Lambda$,
\[T_{k,\lambda}x\xrightarrow[k\to \infty]{k\in I} 0,\]
where $I=\bigcup_{s\ge 1}[k_s,k_s+\phi(k_s)]$. 
\end{theorem}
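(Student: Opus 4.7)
The strategy is to construct inductively a basic sequence $(e_s)_{s \ge 1}$ in $X$ and an increasing sequence of positive integers $k_1 < k_2 < \cdots$ with $k_{s+1} > k_s + \phi(k_s)$, then set $M_0 := \overline{\mathrm{span}}\{e_s : s \ge 1\}$ and $I := \bigcup_{s \ge 1}[k_s, k_s + \phi(k_s)]$. A preliminary reduction: replacing each $\Lambda_n^j$ by $\Lambda_n := \Lambda_n^0 \cap \Lambda_n^1 \cap \Lambda_n^2$ yields a single chain that still exhausts $\Lambda$ (each $\lambda$ eventually lies in all three given chains) and satisfies (i)--(iii). Fix topologizing increasing sequences of seminorms $(p_m)$ on $X$ (with $p_1$ a norm) and $(q_m)$ on $Y$; record seminorms $r_{n,m,k}$ from (i) with $q_m(T_{k,\lambda} x) \le r_{n,m,k}(x)$ for $\lambda \in \Lambda_n$, and $\tilde p_{n,m}$ from (iii) with $q_m(T_{k,\lambda} x) \le \tilde p_{n,m}(x)$ for $\lambda \in \Lambda_n$, $k \ge 1$, $x \in M_k$.

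At step $s$, assuming $e_1, \ldots, e_{s-1}$, $k_1 < \cdots < k_{s-1}$, and auxiliary approximants $\tilde e_j^{(n)} \in X_{n,0}$ of each $e_j$ are in hand, proceed in three substeps: (a) apply condition (ii) to the $\tilde e_j^{(n)}$'s to choose $k_s > k_{s-1} + \phi(k_{s-1})$ so large that $q_m(T_{k,\lambda} \tilde e_j^{(n)}) < 2^{-s}$ for all $j < s$, $n, m \le s$, $\lambda \in \Lambda_n$, $k \ge k_s$; (b) via a Mazur-type basic sequence perturbation lemma (available since $X$ has a continuous norm and $M_{k_s}$ is closed and infinite-dimensional), pick $e_s \in M_{k_s}$ with $p_s(e_s)$ tiny enough to preserve the basic sequence property, and with $\tilde p_{n,m}(e_s) < 2^{-s}$ for $n, m \le s$ (for future (iii)-control) and $r_{s,m,k}(e_s) < 2^{-s}$ for $m \le s$ and $k \in [k_s, k_s + \phi(k_s)]$ (for current (i)-control); (c) by density of each $X_{n,0}$, fix further approximants $\tilde e_s^{(n)} \in X_{n,0}$ for $n \le s+1$, as close as one pleases to $e_s$ in every seminorm $r_{n,m,k}$ that has been recorded so far.

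For the verification, fix $(x, \lambda) \in M_0 \times \Lambda$, write $x = \sum_s a_s e_s$, let $n_0 = n_0(\lambda)$ be the smallest $n$ with $\lambda \in \Lambda_n$, and take $k \in [k_s, k_s + \phi(k_s)]$ with $s \ge n_0$. The decomposition
\[
T_{k,\lambda} x = \sum_{j < s} a_j T_{k,\lambda} e_j + a_s T_{k,\lambda} e_s + T_{k,\lambda}\Bigl(\sum_{j > s} a_j e_j\Bigr)
\]
is analyzed term by term: since $k < k_{s+1}$ and $(M_k)$ is non-increasing, the tail lies in $M_{k_{s+1}} \subseteq M_k$, and (iii) bounds the last term by $\tilde p_{n_0, m}\bigl(\sum_{j > s} a_j e_j\bigr)$, which tends to $0$ by the summability $\tilde p_{n_0,m}(e_j) < 2^{-j}$ for $j > n_0$; the central term is at most $|a_s|\, r_{s,m,k}(e_s) \le |a_s| 2^{-s}$ by (i), valid because $\lambda \in \Lambda_s$; and the past sum is treated index by index, invoking the choice of $k_s$ in (a) directly when $j \ge n_0$, and writing $T_{k,\lambda} e_j = T_{k,\lambda}\tilde e_j^{(n_0)} + T_{k,\lambda}(e_j - \tilde e_j^{(n_0)})$ for the finitely many $j < n_0$, where (ii) at level $n_0$ handles the first summand and (i) together with the approximation fineness from (c) handles the error.

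The main obstacle is precisely the past indices $j < n_0(\lambda)$: condition (ii) at level $j$ yields uniform convergence only on $\Lambda_j$, which does not contain $\lambda$, so the already-fixed $e_j$ must be approximated by elements of $X_{n_0, 0}$ with the resulting error controlled uniformly over the unbounded range of $k \ge k_s$. The remedy is the iterated hedging in step (c): at each step one anticipates every chain level that might later be consulted, using density of each $X_{n, 0}$ and the fact that only finitely many seminorms $r_{n,m,k}$ have been registered at any finite step to make the approximations as fine as needed. Balancing this hedging against the simultaneous demands that $e_s \in M_{k_s}$, that $e_s$ be small in the (i)- and (iii)-seminorms, and that $(e_s)$ remains a basic sequence with uniformly bounded coefficient functionals is the technical crux of the construction.
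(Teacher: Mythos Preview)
Your argument has a genuine gap in the control of the past sum $\sum_{j<s} a_j T_{k,\lambda} e_j$ for $k\in[k_s,k_s+\phi(k_s)]$. The approximants $\tilde e_j^{(n)}$ fixed in step~(c) at time~$j$ are close to $e_j$ only in the seminorms $r_{n,m,k}$ \emph{recorded so far}, i.e.\ for $k$ lying in intervals $[k_{s'},k_{s'}+\phi(k_{s'})]$ with $s'\le j$. But your verification needs $r_{n_0,m,k}(e_j-\tilde e_j^{(n_0)})$ small for $k\in[k_s,k_s+\phi(k_s)]$ with $s>j$, and those seminorms were not yet known at step~$j$. (There is also an indexing slip: since you only define $\tilde e_j^{(n)}$ for $n\le j+1$, the object $\tilde e_j^{(n_0)}$ does not exist when $j<n_0-1$.) The circularity is intrinsic: if the approximant is chosen before $k_s$, the error seminorms for the interval $[k_s,k_s+\phi(k_s)]$ are unknown; if after, you cannot use condition~(ii) on it to pick $k_s$ in the first place.

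The paper resolves this not by approximating fixed vectors $e_j\in M_{k_j}$, but by \emph{perturbing} a pre-chosen basic sequence $u_i\in M_i$ via an infinite series $x_i=u_i+\sum_{l\ge1}x_{i,l}$, where the correction $x_{i,l}$ is selected at step~$l$. One arranges that each partial sum $u_i+\sum_{l'\le l}x_{i,l'}$ lands in the dense set $X_{k_{l-1}+\phi(k_{l-1}),0}$ \emph{before} $k_l$ is chosen, so (ii) applies; the later corrections $x_{i,l'}$ with $l'>l$ are then made small under $T_{k,\lambda}$ for every $k$ already fixed by step~$l'$. Thus the role of your error $e_j-\tilde e_j^{(n_0)}$ is played by $\sum_{l'>l}x_{i,l'}$, whose terms are chosen \emph{after} the relevant $k$'s are known --- exactly the freedom your static single-approximant scheme lacks.

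A secondary point: the constraints in step~(b), namely $\tilde p_{n,m}(e_s)<2^{-s}$ and $r_{s,m,k}(e_s)<2^{-s}$, are in general incompatible with any normalization ensuring bounded coefficient functionals, since $\tilde p_{n,m}$ and $r_{s,m,k}$ are arbitrary continuous seminorms that may dominate the norm $p_1$. This part is repairable (take $e_s\in M_{k_s+\phi(k_s)}$ and absorb the central term into the tail, which already tends to zero by continuity of $\tilde p_{n_0,m}$), but the past-sum gap above is not.
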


The choice of the function $\phi:\N\to \N$ in Theorem~\ref{NiceMn} is the main tool to generalize Criterion~$(M_k)$ to $\mathcal{U}$-frequently hypercyclic subspaces (Section~\ref{SecU}) and to common hypercyclic subspaces (Section~\ref{Sec common}).
The proof of Theorem~\ref{NiceMn} relies on the notion of a basic sequence, which has been commonly used for the construction of closed, infinite-dimensional subspaces whose vectors are to satisfy special properties. We refer to \cite{Menet1, Petersson} for more details about the construction of basic sequences in Fr\'{e}chet spaces with a continuous norm.

\begin{definition}
A sequence $(u_k)_{k\ge 1}$ in a Fr\'{e}chet space is called \emph{basic} if for every $x\in \overline{\text{span}}\{u_k:k\ge 1\}$, there exists a unique sequence $(a_k)_{k\ge 1}$ in $\mathbb{K}$ ($\mathbb{K}=\mathbb{R}$ or $\mathbb{C}$) such that $x=\sum_{k=1}^{\infty}a_ku_k$.
\end{definition}

\begin{proof}[Proof of Theorem \ref{NiceMn}]\  Let $(p_j)$ be an increasing sequence of norms inducing the topology of $X$ and let $(q_j)$ be an increasing sequence of seminorms inducing the topology of $Y$. 
We consider $\Lambda_n=\Lambda^0_n\cap \Lambda^1_n\cap \Lambda^2_n$ and a basic sequence $(u_i)_{i\ge1}$ in $X$ such that $u_i\in M_i$ and  $p_1(u_i)=1$ for each $i\ge 1$,  and so that for every $j\ge 1$, the sequence $(u_i)_{i\ge j}$ is basic in $X_j:=(X,p_j)$ with basic constant less than $2$.
By $(i)$, for each $n, k, j\ge 1$ , there exists $K_{n,k,j}>0$ and $m^0(n,k,j)\in\mathbb{N}$ such that for any $x\in X$,
\begin{equation}
\sup_{\lambda\in \Lambda_n}q_j(T_{k,\lambda}x)\le K_{n,k,j} \ p_{m^0(n,k,j)}(x).
\label{eqi)}
\end{equation}

By $(iii)$, for each $n,j\ge 1$ there exist  $C_{n,j}>0$ and  $m(n,j)\in\mathbb{N}$
such that for any $k\ge 1$ and  any $x\in M_k$,
\begin{equation}
\sup_{\lambda\in \Lambda_n}q_j(T_{k,\lambda}x)\le C_{n,j} p_{m(n,j)}(x).
\label{eqiii)}
\end{equation}

Since each set $X_{i,0}$ is dense, we can construct a family $(x_{i,l})_{i,l\ge 1}\subset X$ and a sequence of integers $(k_l)_{l\ge 0}$ with $k_0=1$ such that for any $l\ge 1$,
\begin{enumerate}
\item for any $i\ge 1$, any ${n,k,j\le i}$,
\begin{equation}
p_{m^0(n,k,j)}(x_{i,l})<\frac{1}{2^{i+l}K_{n,k,j}} \quad \text{and} \quad p_i(x_{i,l})<\frac{1}{2^{i+l+2}};
\label{3prop f}
\end{equation}
\item for any $i,k\le k_{l-1}+\phi(k_{l-1})$,
\begin{equation}
\sup_{\lambda\in \Lambda_{k_{l-1}+\phi(k_{l-1})}}q_{k_{l-1}+\phi(k_{l-1})}(T_{k,\lambda}x_{i,l})<\frac{1}{2^{i+l}};
\label{3prop f2}
\end{equation}
\item for any $i\ge 1$, $u_i+\sum_{l'=1}^{l}x_{i,l'}\in X_{k_{l-1}+\phi(k_{l-1}),0}$;
\item $k_{l}> k_{l-1}+\phi(k_{l-1})$;
\item for any $i\le k_{l-1}+\phi(k_{l-1})$, any $k\in [k_l,k_l+\phi(k_l)]$,
\begin{equation}
\sup_{\lambda\in \Lambda_{k_{l-1}+\phi(k_{l-1})}}q_{k_{l-1}+\phi(k_{l-1})}\Big(T_{k,\lambda} \Big(u_i+\sum_{l'=1}^{l}x_{i,l'}\Big)\Big)<\frac{1}{2^{l+i}}.
\label{3prop k}
\end{equation}
\end{enumerate}
Indeed, in order to satisfy (1) and (2), it suffices to choose $x_{i,l}$ sufficiently small thanks to \eqref{eqi)} and since we choose $x_{i,l}$ such that $u_i+\sum_{l'=1}^{l}x_{i,l'}\in X_{k_{l-1}+\phi(k_{l-1}),0}$, we know that for any $i\ge 1$,
$T_{k,\lambda}\Big(u_i+\sum_{l'=1}^{l}x_{i,l'}\Big)$ tends uniformly to $0$ on $\lambda\in \Lambda_{k_{l-1}+\phi(k_{l-1})}$. We can thus find $k_l$ sufficiently big such that (4) and (5) are satisfied.

For any $n\ge 1$, we let $x_n=u_n+\sum_{l=1}^{\infty}x_{n,l}$. We deduce from \eqref{3prop f} that $(x_{k_l+\phi(k_l)})_{l\ge 1}$ is a basic sequence equivalent to the sequence $(u_{k_l+\phi(k_l)})_{l\ge 1}$. Let $M_0$ be the closed linear span of $(x_{k_l+\phi(k_l)})_{l\ge 1}$ in $X$ and $x$ a vector in $M_0$. We know that we have $x=\sum_{s=1}^{\infty} a_s x_{k_s+\phi(k_s)}$ where the sequence $(a_s)_{s\ge 1}$ is bounded by some constant $K$. 

Let $n,j\ge 1$ and $l\ge 2$ with $n,j\le k_{l-1}+\phi(k_{l-1})$. Since 
\begin{equation*}
{\sum_{s=l}^{\infty}a_s u_{k_s+\phi(k_s)}\in M_{k_l+\phi(k_l)}},
\end{equation*}
 we deduce from \eqref{eqi)}, \eqref{eqiii)}, \eqref{3prop f}, \eqref{3prop f2} and \eqref{3prop k} that for any $\lambda\in \Lambda_n$,  any $k\in [k_l,k_l+\phi(k_l)]$,
\begin{align*}
q_j(T_{{k},\lambda}x)&\le \sum_{s=1}^{l-1}|a_s| q_j(T_{{k},\lambda}x_{k_s+\phi(k_s)})
+ \sum_{s=l}^{\infty} |a_s| q_j(T_{{k},\lambda}(x_{k_s+\phi(k_s)}-u_{k_s+\phi(k_s)}))\\
&\quad+ q_j\Big(T_{{k},\lambda}\Big(\sum_{s=l}^{\infty}a_su_{k_s+\phi(k_s)}\Big)\Big)\\
&\le \sum_{s=1}^{l-1}|a_s| q_{k_{l-1}+\phi(k_{l-1})}\Big(T_{{k},\lambda}\Big(u_{k_s+\phi(k_s)}+\sum_{l'=1}^{l}x_{k_s+\phi(k_s),l'}\Big)\Big)\\
&\quad+ \sum_{s=1}^{l-1}\sum_{l'=l+1}^{\infty}|a_s| q_{k_{l'-1}+\phi(k_{l'-1})}(T_{{k},\lambda}x_{k_s+\phi(k_s),l'})\\
&\quad+ \sum_{s=l}^{\infty} \sum_{l'=1}^{\infty}|a_s| q_j(T_{{k},\lambda}x_{k_s+\phi(k_s),l'})+ q_j\Big(T_{{k},\lambda}\Big(\sum_{s=l}^{\infty}a_su_{k_s+\phi(k_s)}\Big)\Big)\\
&\le \sum_{s=1}^{l-1} \frac{K}{2^{l+k_s+\phi(k_s)}}+\sum_{s=1}^{l-1} \sum_{l'=l+1}^{\infty}\frac{K}{2^{k_s+\phi(k_s)+l'}}\\
&\quad +K \sum_{s=l}^{\infty} \sum_{l'=1}^{\infty} K_{n,k,j} p_{m^0(n,k,j)}(x_{k_s+\phi(k_s),l'}) + C_{n,j}p_{m(n,j)}\Big(\sum_{s=l}^{\infty}a_s u_{k_s+\phi(k_s)}\Big)\\
&\le \frac{lK}{2^{l-1}}
+ K\sum_{s=l}^{\infty}   \frac{K_{n,k,j}}{2^{k_s+\phi(k_s)}K_{n,k,j}}+ C_{n,j}p_{m(n,j)}\Big(\sum_{s=l}^{\infty}a_s u_{k_s+\phi(k_s)}\Big)\\
&\le \frac{lK}{2^{l-1}}
+  K\sum_{s=l}^{\infty} \frac{1}{2^{k_s+\phi(k_s)}}  + C_{n,j}p_{m(n,j)}\Big(\sum_{s=l}^{\infty}a_s u_{k_s+\phi(k_s)}\Big)
 \underset{l\rightarrow \infty}{\longrightarrow} 0.
\end{align*}
\end{proof}

 We finish this section by stating two particular cases of Theorem~\ref{NiceMn}. In Corollary~\ref{NiceMnBan} below we consider the case when   $\{ T_{\lambda}\}_{\lambda\in \Lambda}$ is a family of operators on a Banach space $X$. We use Corollary~\ref{NiceMnBan} in Section~\ref{Sec:Charac} to study the existence of common hypercyclic subspaces for collections of scalar multiples of a fixed operator on a complex Banach space.

\begin{cor}\label{NiceMnBan}
Let $(X,\|\cdot\|)$ be a separable infinite-dimensional Banach space, let $\{ T_{\lambda}\}_{\lambda\in \Lambda}$ be a family of operators in  $L(X)$, and let  $(n_k)_{k\ge 1}$ be a strictly increasing sequence of  positive integers.
Suppose there exist chains  $(\Lambda^1_n)$ and $(\Lambda^2_n)$  of $\Lambda$ satisfying:
\begin{enumerate}
\item[(i)]\  for any $n\ge 1$, there exists a dense subset $X_{n,0}$ of $X$ such that for any $x\in X_{n,0}$, \[\sup_{\lambda\in \Lambda^1_n} \|T^{n_k}_{\lambda}x\|\xrightarrow[k\to \infty]{} 0;\]
\item[(ii)]\ there exists  a non-increasing sequence $(M_k)$ of infinite-dimensional closed subspaces of $X$ such that for any $n\in\mathbb{N}$, 
\[\sup_{(\lambda, k)\in \Lambda^2_n\times \mathbb{N}}\|T^{n_k}_{\lambda|M_k}\|<\infty.\]
\end{enumerate}
Then for any $\phi:\mathbb{N}\to \mathbb{N}$, there exist an increasing sequence of integers $(k_s)_{s\ge 1}$ and an infinite-dimensional closed subspace $M_0$ of $X$ such that for any $x\in M_0$ and any $\lambda\in \Lambda$,
\[T^{n_k}_{\lambda}x\xrightarrow[k\to \infty]{k\in I} 0\]
where  $I=\bigcup_{s\ge 1}[k_s,k_s+\phi(k_s)]$.  
\end{cor}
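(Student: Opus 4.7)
My plan is to obtain Corollary~\ref{NiceMnBan} as a direct specialization of Theorem~\ref{NiceMn}, applied with $T_{k,\lambda}:=T_\lambda^{n_k}$ and $Y=X$. The corollary supplies only two chains $(\Lambda^1_n)$ and $(\Lambda^2_n)$, whereas the theorem also requires a chain $(\Lambda^0_n)$ witnessing, at each fixed $k$, the equicontinuity of the families $\{T_{k,\lambda}\}_{\lambda}$. Producing such a chain, and then checking that hypotheses (ii) and (iii) of Theorem~\ref{NiceMn} translate immediately from hypotheses (i) and (ii) of the corollary, is essentially the whole argument.

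For the missing chain I would take
\[
\Lambda^0_n := \{\lambda\in\Lambda : \|T_\lambda\|\le n\},
\]
which is non-decreasing in $n$ and whose union is $\Lambda$ since each $T_\lambda$ lies in $L(X)$. For $\lambda\in\Lambda^0_n$, submultiplicativity of the operator norm gives $\|T_\lambda^{n_k}\|\le n^{n_k}<\infty$, so at each fixed $n$ and $k$ the family $\{T_{k,\lambda}\}_{\lambda\in\Lambda^0_n}$ is uniformly bounded in operator norm and hence equicontinuous. This delivers hypothesis (i) of Theorem~\ref{NiceMn}.

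Hypothesis (ii) of Theorem~\ref{NiceMn} with respect to $(\Lambda^1_n)$ is a rephrasing of hypothesis (i) of the corollary, since $\sup_{\lambda\in\Lambda^1_n}\|T_\lambda^{n_k}x\|\to 0$ as $k\to\infty$ is precisely the uniform convergence of $T_{k,\lambda}x$ to $0$ on $\Lambda^1_n$. Hypothesis (iii) with respect to $(\Lambda^2_n)$ and the subspaces $(M_k)$ unfolds, since $X$ carries the single norm $\|\cdot\|$, as the existence for each $n$ of a constant $C_n>0$ with
\[
\sup_{\lambda\in\Lambda^2_n}\|T_\lambda^{n_k}x\|\le C_n\|x\|\qquad(k\ge 1,\ x\in M_k),
\]
which is exactly the finiteness assumption in (ii) of the corollary.

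With the three chains $(\Lambda^0_n),(\Lambda^1_n),(\Lambda^2_n)$ in place and the three hypotheses verified, Theorem~\ref{NiceMn} applied to the prescribed $\phi:\mathbb{N}\to\mathbb{N}$ furnishes an increasing sequence $(k_s)$ and an infinite-dimensional closed subspace $M_0\subseteq X$ such that $T_{k,\lambda}x\to 0$ as $k\to\infty$ through $I=\bigcup_s[k_s,k_s+\phi(k_s)]$, for every $(x,\lambda)\in M_0\times\Lambda$; unraveling $T_{k,\lambda}=T_\lambda^{n_k}$ yields the conclusion. I do not anticipate any real obstacle: the only moderately delicate point is choosing $(\Lambda^0_n)$, and the definition above works because in a Banach space equicontinuity of a family at a fixed iterate costs nothing beyond a norm bound, which one can always arrange by exhaustion.
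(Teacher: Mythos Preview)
Your proposal is correct and follows essentially the same approach as the paper: both reduce to Theorem~\ref{NiceMn} by constructing the missing chain $\Lambda^0_n=\{\lambda\in\Lambda:\|T_\lambda\|\le n\}$ and using submultiplicativity to obtain the bound $\|T_\lambda^{n_k}\|\le n^{n_k}$, while the remaining hypotheses translate verbatim. The only difference is that you spell out the verification of hypotheses (ii) and (iii) more explicitly than the paper does.
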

\begin{proof}
In view of Theorem~\ref{NiceMn}, it suffices to prove the existence of a chain $(\Lambda^0_n)$ of $\Lambda$ such that for any $n,k\ge 1$, there exists $K_{n,k}$ such that for any $x\in X$,
\[\sup_{\lambda\in \Lambda^0_n}\|T^{n_k}_{\lambda}x\|\le K_{n,k}\|x\|.\]
For each $n\in\mathbb{N}$, let $\Lambda^0_n=\{\lambda\in \Lambda:\|T_\lambda\|\le n\}$. So $(\Lambda^0_n)$ is a chain of $\Lambda$ and for any $n,k\in\mathbb{N}$ and $x\in X$, we have
\[\sup_{\lambda\in \Lambda^0_n}\|T^{n_k}_{\lambda}x\|
\le \sup_{\lambda\in \Lambda^0_n}\|T^{n_k}_{\lambda}\| \|x\|
\le \sup_{\lambda\in \Lambda^0_n}\|T_{\lambda}\|^{n_k} \|x\|
\le n^{n_k}\|x\|.\]
We conclude that the chain $(\Lambda^0_n)$ satisfied the desired inequalities for $K_{n,k}=n^{n_k}$.
\end{proof}

 Finally, for the case of a single sequence of operators $(T_n)$ in $L(X,Y)$,  Theorem~\ref{NiceMn} gives the following.

\begin{cor}\label{NiceMnone}
Let $X$ be an infinite-dimensional Fr\'{e}chet space with a continuous norm, let $Y$  be a separable Fr\'{e}chet space  and let $(T_n)$ be a sequence in $L(X,Y)$. Suppose there exists a strictly increasing sequence of integers  $(n_k)$ satisfying
\begin{enumerate}
\item[(a)]\  $T_{n_k}x\underset{k\to \infty}{\to} 0$ for each $x$ in some dense subset $X_0$ of $X$, and

\item[(b)]\  $(T_{n_k})_{k\ge 1}$ is equicontinuous along some non-increasing sequence $(M_k)$ of infinite-dimensional closed subspaces of $X$.
\end{enumerate}
Then for any $\phi:\mathbb{N}\to \mathbb{N}$ there exist an increasing sequence of integers $(k_s)_{s\ge 1}$ and an infinite-dimensional closed subspace $M_0$ such that if  $I=\bigcup_{s\ge 1}[k_s,k_s+\phi(k_s)]$, then for any $x\in M_0$,
\[T_{n_k}x\xrightarrow[k\to \infty]{k\in I} 0.\]
\end{cor}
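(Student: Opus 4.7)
The plan is to derive Corollary~\ref{NiceMnone} as an immediate specialization of Theorem~\ref{NiceMn} to a one-point parameter set, so that the three chain hypotheses collapse onto the two assumptions (a) and (b). Concretely, I would set $\Lambda := \{*\}$ a singleton and define $T_{k,*} := T_{n_k}$ for each $k\ge 1$; then the only possible chain is $\Lambda^j_n = \{*\}$ for all $n\ge 1$ and $j\in\{0,1,2\}$.

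Next I would verify that hypotheses (i)--(iii) of Theorem~\ref{NiceMn} reduce to the given data. For (i), the family $\{T_{k,*}\} = \{T_{n_k}\}$ consists of a single continuous operator, so it is automatically equicontinuous for every $n$ and $k$. For (ii), uniform convergence of $T_{k,*}x$ over the one-point set $\Lambda^1_n$ is nothing but pointwise convergence $T_{n_k}x \to 0$; thus taking $X_{n,0} := X_0$ for every $n$ makes (ii) equivalent to hypothesis (a). For (iii), the definition of uniform equicontinuity along $(M_k)$ over a singleton $\Lambda^2_n$ coincides with the ordinary equicontinuity of $(T_{n_k})_{k\ge 1}$ along $(M_k)$, which is exactly hypothesis (b).

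With the hypotheses of Theorem~\ref{NiceMn} in place, I would apply it to the prescribed function $\phi:\N\to\N$ to produce an increasing sequence $(k_s)_{s\ge 1}$ and an infinite-dimensional closed subspace $M_0 \subseteq X$ such that, for every $(x,\lambda)\in M_0\times\{*\}$, one has $T_{k,\lambda}x \to 0$ as $k\to\infty$ along $k\in I := \bigcup_{s\ge 1}[k_s, k_s+\phi(k_s)]$. Unwinding the definition $T_{k,*} = T_{n_k}$ yields the desired conclusion $T_{n_k}x \xrightarrow[k\in I]{k\to\infty} 0$ for all $x\in M_0$.

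There is essentially no obstacle here, since the work has already been done in Theorem~\ref{NiceMn}; the only thing to be careful about is to make sure that the reindexing $T_{k,*} := T_{n_k}$ is applied \emph{before} invoking the theorem, so that the set $I$ of "good" indices produced by the theorem is expressed in the variable $k$ (parametrizing the subsequence), as required by the conclusion of the corollary.
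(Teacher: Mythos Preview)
Your proposal is correct and follows exactly the approach the paper intends: the paper presents this corollary without proof, stating only that ``for the case of a single sequence of operators $(T_n)$ in $L(X,Y)$, Theorem~\ref{NiceMn} gives the following,'' and your reduction via a singleton parameter set with $T_{k,*}:=T_{n_k}$ is precisely the intended specialization. Your care about reindexing before applying the theorem so that $I$ lives in the $k$-variable is the only subtlety, and you have handled it correctly.
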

We use Corollary~\ref{NiceMnone} in Section~\ref{SecU} to obtain a version of Criterion~$(M_k)$ for the existence of $\mathcal{U}$-frequently hypercyclic subspaces. 

\section{Existence of $\mathcal{U}$-frequently hypercyclic subspaces}\label{SecU}

The existence of frequently hypercyclic subspaces was first investigated by Bonilla and Grosse-Erdmann~\cite{Bonilla2}, who generalized Criterion~$M_0$ to one for obtaining frequently hypercyclic subspaces, which  has in turn  been applied to convolution operators and weighted composition operators, see \cite{Bonilla2}, \cite{Bes}, \cite{Bes2}.
We first recall the Frequent Hypercyclicity Criterion due to Bayart and Grivaux~\cite{Bayart04}; the version we use here is due to Bonilla and Grosse-Erdmann~\cite{Bonilla3}.
\begin{theorem}[{\bf Frequent Universality Criterion}]  \label{T:FHC}
Let $X$ be a Fr\'{e}chet space, let $Y$ be a separable Fr\'{e}chet space and let $(T_n)$ be a sequence in $L(X,Y)$. If there exist a dense subset $Y_0\subset Y$ and $S_k:Y_0\rightarrow X$, $k\ge 0$ such that for each $y\in Y_0$,
\begin{enumerate}[\upshape 1.]
\item $\sum_{n=0}^{\infty}S_ny$ converges unconditionally in $X$,
\item $\sum_{n=1}^{k}T_{k}S_{k-n}y$ converges unconditionally in $Y$, uniformly in $k\ge 0$,
\item $\sum_{n=1}^{\infty}T_{k}S_{k+n}y$ converges unconditionally in $Y$, uniformly in $k\ge 0$,
\item $T_nS_ny\rightarrow y$,
\end{enumerate}
then $(T_n)$ is frequently hypercyclic.
\end{theorem}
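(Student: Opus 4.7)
The plan is to mimic the constructive Bonilla–Grosse-Erdmann proof of the Frequent Hypercyclicity Criterion (for iterates) but formulated for general sequences $(T_n)$, which rules out any ergodic/invariant measure approach and forces a direct combinatorial construction of a frequently hypercyclic vector as an absolutely/unconditionally convergent series.

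I would start by fixing a countable base $(V_l)_{l\ge 1}$ of nonempty open sets of $Y$ (using separability) and, for each $l$, choosing $y_l\in Y_0\cap V_l$ together with a continuous seminorm $q_l$ on $Y$ and $\delta_l>0$ such that the ball $\{z\in Y:q_l(z-y_l)<2\delta_l\}$ sits inside $V_l$. Next I would construct pairwise disjoint sets $A_l\subset\mathbb{N}$, each of positive lower density, with a separation property: there exist integers $N_l\to\infty$ so that whenever $n\in A_l$, $m\in A_{l'}$ and $(n,l)\ne(m,l')$ one has $|n-m|\ge N_l$. A concrete way is to take $A_l$ to be a carefully thinned arithmetic progression $p_l\mathbb{N}+r_l$, with $p_l$ increasing fast enough to ensure both disjointness and $\min A_l \ge N_{l-1}$ (this is the textbook disjointification trick). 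Each $A_l$ then has positive lower density $1/p_l$.

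The candidate vector is
\[
x:=\sum_{l\ge 1}\sum_{n\in A_l}S_n y_l.
\]
Using hypothesis~(1) and the $\delta_l$'s (chosen summable in a suitable sense with the seminorms of $X$), I would check that the outer series converges unconditionally in $X$. To show $x$ is frequently hypercyclic, I would fix $l_0$ and verify that $T_{n_0}x\in V_{l_0}$ for all but finitely many $n_0\in A_{l_0}$. Splitting
\[
T_{n_0}x \;=\; T_{n_0}S_{n_0}y_{l_0}\;+\;\sum_{\substack{m\in A_l,\;l\\ m<n_0}}T_{n_0}S_m y_l\;+\;\sum_{\substack{m\in A_l,\;l\\ m>n_0}}T_{n_0}S_m y_l,
\]
hypothesis~(4) makes the diagonal term approach $y_{l_0}$, while hypotheses~(2) and~(3) give uniform unconditional control of the backward and forward tails: re-indexing by $k=n_0-m$ (respectively $k=m-n_0$), the restriction to indices $k\ge N_{l_0}$ becomes arbitrarily small, because only finitely many $l$ can contribute an index $m$ with $|n_0-m|<N_{l_0}$, and the separation forces those $l$ to be large. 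Thus $q_{l_0}(T_{n_0}x-y_{l_0})<2\delta_{l_0}$ for cofinitely many $n_0\in A_{l_0}$, so $\{n:T_nx\in V_{l_0}\}$ contains $A_{l_0}$ up to a finite set and inherits positive lower density.

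The main obstacle is the triple bookkeeping: choosing the $A_l$ with simultaneously (i) positive lower density, (ii) pairwise disjointness, and (iii) growing mutual separation $N_l\to\infty$; then tuning the $\delta_l$'s so that the defining double series for $x$ converges in $X$ \emph{and} the tail estimates in $Y$ obtained from hypotheses~(2)–(3) — which are only \emph{unconditional} and uniform in $k$, not absolute — can be combined with the separation to give genuine smallness of the off-diagonal contributions. Once this combinatorial scaffolding is in place, the rest is a clean splitting argument using hypotheses~(2)–(4).
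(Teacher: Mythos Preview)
The paper does not prove this theorem at all: it is stated as a known result, with the attribution ``the version we use here is due to Bonilla and Grosse-Erdmann~\cite{Bonilla3}'', and no proof is given. Your proposal is precisely the constructive Bonilla--Grosse-Erdmann argument that the paper is citing, so in that sense your approach coincides with the intended one.

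One genuine gap in your sketch: the concrete construction of the sets $A_l$ as arithmetic progressions $p_l\mathbb{N}+r_l$ does \emph{not} deliver the separation property you need. Disjointness of two arithmetic progressions says nothing about the minimal gap between their elements; one can easily have $n\in A_l$ and $n+1\in A_{l'}$. The correct requirement is that for $n\in A_l$, $m\in A_{l'}$ with $n\ne m$ one has $|n-m|\ge N_l+N_{l'}$ (and $\min A_l\ge N_l$), and the standard construction achieving this---via an inductive dyadic-block argument as in Bayart--Grivaux or \cite[Lemma~9.5]{Grosse}---is more delicate than thinning arithmetic progressions. Once you invoke that lemma instead, the rest of your splitting argument using hypotheses (2)--(4) goes through exactly as you describe.
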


We can now state (a slightly more general formulation of)  the Bonilla and Grosse-Erdmann's criterion for the existence of frequently hypercyclic subspaces.  
\begin{theorem}  {\bf (Bonilla and Grosse-Erdmann)} \label{T:FHCS}
Let $X$ be a Fr\'{e}chet space with a continuous norm and let $Y$ be a separable Fr\'{e}chet space. 
Let $(T_n)$ be a sequence in $L(X, Y)$ satisfying the Frequent Universality Criterion.  Suppose that
\[
T_nx\xrightarrow[n\to\infty]{} 0
\]
for each vector $x$ in some closed, infinite-dimensional subspace $M_0$ of $X$.  Then $(T_n)$ has a frequently hypercyclic subspace.
\end{theorem}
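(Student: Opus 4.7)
The plan is to blend the basic-sequence construction underlying Criterion~$M_0$ (Theorem~\ref{thm M0}) with the frequent-universality machinery provided by Theorem~\ref{T:FHC}. Since $X$ admits a continuous norm, I would first fix an increasing sequence of norms $(p_j)$ inducing the topology of $X$, an increasing sequence of seminorms $(q_j)$ inducing the topology of $Y$, a countable dense sequence $(y_j)_{j\ge 1}$ in $Y$, and a basic sequence $(u_i)_{i\ge 1}$ inside $M_0$ normalized so that $p_1(u_i)=1$ and whose basic constants in each norm $p_j$ stay uniformly bounded. Such a basic sequence exists because $M_0$ is closed and infinite-dimensional, and $X$ carries a continuous norm \cite{Menet1, Petersson}.

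Next, I would use the Frequent Universality Criterion to produce small perturbations. Choose pairwise disjoint sets $A_{i,j}\subset\N$ $(i,j\ge 1)$ of positive lower density, spaced so that the tail contributions of the series $\sum_{n}S_n y_j$ become negligible across the double indexing. Set
\[
v_i \ :=\ \sum_{j\ge 1}\,\varepsilon_{i,j}\sum_{n\in A_{i,j}} S_n y_j,
\]
with scalars $\varepsilon_{i,j}>0$ chosen so small that $p_j(v_i)<2^{-i-j}$ for each $j\le i$. Conditions~(1)--(3) of Theorem~\ref{T:FHC} together with the unconditional convergence of the defining series ensure that $v_i\in X$ is well defined, that $T_n v_{i_0}$ is close to $\varepsilon_{i_0,j_0}\,y_{j_0}$ for $n\in A_{i_0,j_0}$ (by condition~(4)), and that the cross contributions $T_n v_i$ with $i\ne i_0$ and $n\in A_{i_0,j_0}$ remain uniformly small.

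Set $z_i:=u_i+v_i$. The smallness of the $v_i$'s in each norm $p_j$ allows a standard perturbation argument to show that $(z_i)_{i\ge 1}$ is a basic sequence in $X$ equivalent to $(u_i)$, so that its closed linear span $Z$ is infinite-dimensional and closed. To verify that every nonzero $x=\sum_{i\ge 1}a_i z_i$ in $Z$ is frequently hypercyclic, fix a nonempty open set $U\subseteq Y$ and let $i_0$ be the smallest index with $a_{i_0}\ne 0$. Writing $T_n x=T_n u+\sum_{i\ge 1}a_i T_n v_i$ with $u:=\sum_i a_i u_i$, the vector $u$ belongs to the closed subspace $M_0$, hence $T_n u\to 0$. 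Choosing $j_0$ so that $a_{i_0}\varepsilon_{i_0,j_0}\,y_{j_0}\in U$ (possible because the family $\{c\,y_j : c\ne 0, j\ge 1\}$ is dense in $Y$), for $n\in A_{i_0,j_0}$ the second sum reduces to $a_{i_0}\varepsilon_{i_0,j_0}\,y_{j_0}$ up to arbitrarily small error, so $T_n x\in U$ along a subset of $A_{i_0,j_0}$ of positive lower density.

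The main technical obstacle is the simultaneous bookkeeping: the $\varepsilon_{i,j}$ must be small enough to preserve the basis property and to keep cross contributions negligible, yet nonzero so that the perturbations actually force visits to $U$; and the sets $A_{i,j}$ must be disjoint, of positive lower density, and spaced sufficiently far apart that the unconditional-convergence tails from Theorem~\ref{T:FHC} can be controlled uniformly against the (bounded) coefficient sequence $(a_i)$, extracted via the continuous coordinate functionals of the basic sequence $(z_i)$. The inductive construction needed to realize all of these constraints at once is the technical heart of the argument and is essentially the one carried out in \cite{Bonilla2}.
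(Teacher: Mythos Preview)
The paper does not give its own proof of Theorem~\ref{T:FHCS}; it simply attributes the result to Bonilla and Grosse-Erdmann and refers to \cite[Theorem~3]{Bonilla2}. Your sketch is indeed the Bonilla--Grosse-Erdmann strategy: perturb a basic sequence in $M_0$ by carefully built frequently-hypercyclic correction terms and show the closed span of the perturbed sequence is a frequently hypercyclic subspace.

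There is, however, a genuine gap in your implementation. You introduce scaling factors $\varepsilon_{i,j}>0$ to force $p_j(v_i)<2^{-i-j}$, and then, for a given $x=\sum_i a_i z_i$ with first nonzero coefficient $a_{i_0}$ and a given open set $U$, you want to choose $j_0$ with $a_{i_0}\,\varepsilon_{i_0,j_0}\,y_{j_0}\in U$. Your justification---that $\{c\,y_j:\ c\ne 0,\ j\ge 1\}$ is dense---is not the right one: here $c$ is \emph{not} a free parameter but is forced to equal $a_{i_0}\varepsilon_{i_0,j_0}$, so what you actually need is that the single-parameter set $\{a_{i_0}\,\varepsilon_{i_0,j}\,y_j:\ j\ge 1\}$ be dense in $Y$. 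Since the convergence constraint on $v_i$ typically forces $\varepsilon_{i_0,j}\to 0$ as $j\to\infty$, this set can easily cluster at the origin and fail to be dense; the same difficulty prevents the ``arbitrarily small error'' claim from being achieved simultaneously with hitting $U$.

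The proof in \cite{Bonilla2} avoids this problem entirely: there are no scaling factors $\varepsilon_{i,j}$. Instead one uses the Bayart--Grivaux lemma to choose the disjoint sets $A_{i,j}$ of positive lower density with rapidly growing minimum elements and large mutual gaps, so that the tails $\sum_{n\in A_{i,j}}S_n y_j$ are automatically small by the unconditional convergence in condition~(1) of Theorem~\ref{T:FHC}. The dense sequence $(y_j)$ is arranged so that each element of a countable dense subset of $Y_0$ appears infinitely often, and one normalizes so that $a_{i_0}=1$; then the target one must approximate along $A_{i_0,j_0}$ is simply $y_{j_0}$, and density is immediate. Replacing your $\varepsilon$-scalings by this gap-based smallness mechanism closes the gap and recovers the argument of \cite{Bonilla2} that the paper cites.
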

 Under the assumption that $(T_n)$ satisfies the Frequent Universality Criterion, their proof of the above theorem (see \cite[Theorem 3]{Bonilla2}) can then be adapted to $\mathcal{U}$-frequently hypercyclic subspaces as follows:

\begin{theorem}[{\bf Criterion~$M_0$ for $\mathcal{U}$-Frequently Hypercyclic Subspaces}]\label{UM0}
Let $X$ be a Fr\'{e}chet space with a continuous norm and let $Y$ be a separable Fr\'{e}chet space. Let $(T_n)$ be a sequence in $L(X,Y)$ satisfying the Frequent Universality Criterion and for which there exist an infinite-dimensional closed subspace $M_0$ and a strictly increasing sequence $(n_k)_{k\ge 1}$ of positive upper density such that for any $x\in M_0$,
\[T_{n_k}x\xrightarrow[k\to \infty]{} 0.\]
Then $(T_n)$ has a $\mathcal{U}$-frequently hypercyclic subspace.
\end{theorem}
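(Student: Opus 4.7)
The proof adapts the argument of Bonilla and Grosse-Erdmann for Theorem~\ref{T:FHCS} to the $\mathcal{U}$-frequently hypercyclic setting. The essential modification is that the visit times built via the Frequent Universality Criterion (FUC) must be forced to lie inside $\{n_k\}_{k\ge 1}$, so that the $M_0$-component of the constructed orbit is controlled; at the same time, positive upper density (a weaker requirement than lower density) is all that is asked of the visit times, which makes matters easier on that front.

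Using the continuous norm on $X$, first construct a basic sequence $(u_i)_{i\ge 1}\subset M_0$ with basic constants uniformly bounded across the seminorms $(p_m)$ defining the topology of $X$. Fix a countable dense sequence $(y_j)_{j\ge 1}\subset Y_0$. Since $\overline{\mbox{dens}}(\{n_k\})=d>0$, partition $\{n_k\}$ into disjoint subsets $(A_{i,j})_{i,j\ge 1}$ each of positive upper density (possible because any set of positive upper density can be split into countably many pieces of positive upper density, e.g.\ by iteratively selecting every other element). With the maps $S_n$ provided by the FUC, define
\[
x_i := u_i + \sum_{j\ge 1}\sum_{n\in A_{i,j}} \lambda_{i,j}\, S_n y_j,
\]
with scalars $\lambda_{i,j}>0$ chosen sufficiently small that the double series converges unconditionally in $X$ (using FUC~(1)), the perturbation of $u_i$ is small in every seminorm $p_m$, and $(x_i)_{i\ge 1}$ remains basic and equivalent to $(u_i)$. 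Let $M$ be the closed linear span of $(x_i)$; it is closed and infinite-dimensional.

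To show that every non-zero $x=\sum_i c_i x_i\in M$ is $\mathcal{U}$-frequently hypercyclic, let $i_0=\min\{i:c_i\ne 0\}$, and fix any $y_j$. For $n\in A_{i_0,j}$ decompose
\[
T_n x = T_n\Bigl(\sum_i c_iu_i\Bigr) + c_{i_0}\lambda_{i_0,j}\,T_nS_n y_j + R_n,
\]
where: the first summand tends to $0$ because $n\in\{n_k\}$ and $\sum_i c_iu_i\in M_0$; the middle term tends to $c_{i_0}\lambda_{i_0,j}y_j$ by FUC~(4); and the remainder $R_n$, collecting all cross terms coming from $c_ix_i$ with $i\ne i_0$ and from perturbation terms indexed by $j'\ne j$, is shown to be uniformly small by means of the uniform unconditional convergence in FUC~(2)-(3). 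Consequently, for any open neighborhood $V$ of $c_{i_0}\lambda_{i_0,j}y_j$, the set $\{n\in A_{i_0,j}:T_nx\in V\}$ is cofinite in $A_{i_0,j}$, so $\{n:T_nx\in V\}$ has positive upper density. Since $(y_j)$ is dense in $Y_0$ (and hence in $Y$) and appropriate rescaling sweeps out arbitrary open sets of $Y$, $x$ is $\mathcal{U}$-frequently hypercyclic.

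\textbf{Main obstacle.} The principal technical difficulty is a two-level bookkeeping task: on the one hand, partitioning $\{n_k\}$ into countably many disjoint blocks $(A_{i,j})$ of positive upper density; on the other hand, choosing the scalars $\lambda_{i,j}$ and controlling the basic-sequence structure of $(x_i)$ so that, at a visit time $n\in A_{i_0,j}$, the remainder $R_n$---which involves perturbation contributions from every pair $(i,j')\ne (i_0,j)$---stays uniformly negligible. This is where the uniform unconditional convergence in FUC~(2)-(3) and the inductive construction of the basic sequence $(u_i)$ (done in the same spirit as the proof of Theorem~\ref{NiceMn}) must be carefully combined.
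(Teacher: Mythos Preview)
The paper's proof is far shorter than your reconstruction: it applies Theorem~\ref{T:FHCS} directly to the \emph{subsequence} $(T_{n_k})_{k\ge 1}$ (which inherits the Frequent Universality Criterion from $(T_n)$), yielding a subspace $M$ on which every non-zero $x$ has $\{k: T_{n_k}x\in U\}$ of positive \emph{lower} density for each open $U$. The elementary identity
\[
\frac{\#(\{n_k:k\in A\}\cap[0,N])}{N+1}=\frac{\#(A\cap[1,j])}{j}\cdot\frac{\#(\{n_k\}\cap[0,N])}{N+1},\qquad j=\#(\{n_k\}\cap[0,N]),
\]
then converts positive lower density in the index $k$ into positive upper density in $n_k$, and the proof is finished in a few lines.

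Your direct approach has a genuine gap. Disjointness and positive upper density of the blocks $A_{i,j}$ are not enough to control $R_n$: for $n\in A_{i_0,j}$ the \emph{same-block} contribution
\[
c_{i_0}\,\lambda_{i_0,j}\sum_{\substack{m\in A_{i_0,j}\\ m\ne n}} T_nS_m y_j
\]
carries the same scalar $\lambda_{i_0,j}$ as the main term and so cannot be suppressed by shrinking $\lambda_{i_0,j}$. Conditions~(2)--(3) of the Frequent Universality Criterion only make $T_nS_m y$ small when $|n-m|$ is large; with no separation hypothesis on $A_{i_0,j}$ (e.g.\ if $n$ and $n-1$ both lie in $A_{i_0,j}$), the term $T_nS_{n-1}y_j$ need not tend to $0$ at all. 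What is missing is a growing-gap condition on the $A_{i,j}$ in the spirit of the Bayart--Grivaux partition lemma, together with an argument that such a partition of $\{n_k\}$ retaining positive upper density actually exists; ``iteratively selecting every other element'' does not provide this. The paper's reindexing trick sidesteps the issue entirely: the standard lower-density gap lemma is applied on the index set $\{k\}$ (hidden inside Theorem~\ref{T:FHCS}), and the density conversion above does the rest.
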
 
\begin{proof}
By applying Theorem~\ref{T:FHCS} to the sequence of operators $(T_{n_k})_{k\ge 1}$, we obtain an infinite-dimensional closed subspace $M$ such that for any non-zero vector $x\in M$ and any non-empty open set $U$ in $Y$ the return set $\{k\ge 1:T_{n_k} x\in U\}$ is a set of positive lower density. Since $(n_k)_{k\ge 1}$ is a set of positive upper density, the set $(n_k)_{k\in A}$ is a set of positive upper density for any set $A$ of positive lower density. Indeed, we have
\[\frac{\#(\{n_k:k\in A\}\cap[0,N])}{N+1}=\frac{\#(A\cap[1,j])}{j} \frac{\#(\{n_k:k\in \N\}\cap[0,N])}{N+1},\]
where $j=\# (\{n_k:k\in \N\}\cap[0,N])$. We conclude that $M$ is a $\mathcal{U}$-frequently hypercyclic subspace.
\end{proof}

We next derive the following $(M_k)$-criterion for the existence of  $\mathcal{U}$-frequently hypercyclic subspaces, which is simpler to apply than Theorem~\ref{UM0}. 

\begin{theorem}[{\bf Criterion~${\bf (M_k)}$ for $\mathcal{U}$-Frequently Hypercyclic Subspaces}]\label{UMk}
Let $X$ be an infinite-dimensional Fr\'{e}chet space with a continuous norm, let $Y$ be  a separable Fr\'{e}chet space and let $(T_n)$ be a sequence in $L(X,Y)$ satisfying the Frequent Universality Criterion. 
Suppose that there exists a strictly increasing sequence $(n_k)$ of positive upper density so that
 \begin{enumerate}
 \item \  $T_{n_k}x\underset{k\to\infty}\to 0$ for each $x$ in some dense subset  $X_0$ of $X$, and 
 \item \ $(T_{n_k})_{k\ge 1}$ is equicontinuous along some non-increasing sequence $(M_k)$ of infinite-dimensional closed subspaces of $X$.
 \end{enumerate}
Then $(T_n)$ has a $\mathcal{U}$-frequently hypercyclic subspace.
\end{theorem}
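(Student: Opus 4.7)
The plan is to apply Theorem~\ref{UM0} to $(T_n)$ after replacing the sequence $(n_k)$ by a carefully chosen subsequence, built via a minor refinement of Corollary~\ref{NiceMnone}. The subsequence will be tuned so that it still has positive upper density, matching the hypothesis of Theorem~\ref{UM0}.

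The first step extracts density data from the hypothesis. Set $\delta:=\overline{\text{dens}}(\{n_k:k\ge 1\})>0$ and $c(N):=\#\{k\ge 1:n_k\le N\}$, and choose an increasing sequence of positive integers $(N_j)_{j\ge 1}$ with $c(N_j)\ge (\delta/2)(N_j+1)$ for all $j$. Next, apply Corollary~\ref{NiceMnone} with $\phi(k):=k$, but with the further requirement that each $k_s$ in the produced sequence is of the form $k_s=\lfloor c(N_{j(s)})/2\rfloor$ for some index $j(s)$. The proof of Corollary~\ref{NiceMnone} (via Theorem~\ref{NiceMn}) constructs $(k_s)$ inductively and at each stage only requires the next term to exceed a finite threshold determined by the previously chosen data; since the prescribed candidate set $\{\lfloor c(N_j)/2\rfloor:j\ge 1\}$ is unbounded in $\mathbb{N}$, the refinement is compatible with the construction. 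This yields a closed infinite-dimensional subspace $M_0\subseteq X$ and a sequence $(k_s)_{s\ge 1}$ of the prescribed form such that, writing $I:=\bigcup_{s\ge 1}[k_s,2k_s]$, we have $T_{n_k}x\to 0$ for every $x\in M_0$ as $k\to\infty$ in $I$.

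The crux is a density calculation. For each $s$, since $[k_s,2k_s]\subseteq I$ and $2k_s\ge c(N_{j(s)})-1$, one obtains
\[
\#\{k\in I:n_k\le N_{j(s)}\}\ \ge\ k_s+1\ \ge\ \frac{c(N_{j(s)})}{2}\ \ge\ \frac{\delta}{4}(N_{j(s)}+1).
\]
Enumerating $\{n_k:k\in I\}$ increasingly as $(m_j)_{j\ge 1}$ then shows that $\overline{\text{dens}}(\{m_j:j\ge 1\})\ge \delta/4>0$; moreover $T_{m_j}x\to 0$ for every $x\in M_0$, since $m_j=n_{k(j)}$ with $k(j)\in I$ tending to infinity. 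Theorem~\ref{UM0} applied to $(T_n)$ (which satisfies the Frequent Universality Criterion by hypothesis), with subsequence $(m_j)$ and subspace $M_0$, then delivers the desired $\mathcal{U}$-frequently hypercyclic subspace.

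The main obstacle is the refinement of Corollary~\ref{NiceMnone} used in the second paragraph: one must inspect the construction in the proof of Theorem~\ref{NiceMn} to confirm that the inductive choice of each $k_l$ is flexible enough to accommodate the extra constraint of belonging to a prescribed cofinal subset of $\mathbb{N}$. This is straightforward because each $k_l$ is only required to exceed a finite threshold depending on the previously chosen data.
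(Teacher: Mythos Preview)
Your argument is correct and follows the same overall strategy as the paper: use Corollary~\ref{NiceMnone} to produce a subspace $M_0$ and an index set $I$ along which $T_{n_k}x\to 0$, arrange that $\{n_k:k\in I\}$ still has positive upper density, and then invoke Theorem~\ref{UM0}. The difference lies only in how the density is preserved. The paper isolates this step as a standalone Lemma~\ref{L:52}: it chooses $\phi$ so that $\tfrac{\phi(k)+1}{n_{k+\phi(k)}}\ge \delta-\tfrac{\delta}{k}$, which forces $\overline{\text{dens}}(\{n_k:k\in\bigcup_s[k_s,k_s+\phi(k_s)]\})=\delta$ for \emph{every} increasing sequence $(k_s)$. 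This decouples the density argument from the construction in Theorem~\ref{NiceMn} and lets Corollary~\ref{NiceMnone} be applied as a black box. Your route instead fixes $\phi(k)=k$ and forces $(k_s)$ to lie in a prescribed cofinal set determined by the density data, recovering only upper density $\ge\delta/4$; this obliges you to reopen the inductive construction in the proof of Theorem~\ref{NiceMn} to check that the choice of each $k_l$ is merely a ``large enough'' condition (it is, as you note). Both approaches work; the paper's is more modular and yields the sharp density, while yours is a valid hands-on alternative that avoids introducing an auxiliary lemma at the cost of a mild white-box dependence on Theorem~\ref{NiceMn}.
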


We will prove Theorem~\ref{UMk} with Theorem~\ref{UM0} and by applying Corollary~\ref{NiceMnone} with a suitable map $\phi$ whose existence is given by the following lemma. 

\begin{lemma}  \label{L:52} Let $(n_k)$ be a  strictly increasing sequence  in $\mathbb{N}$  
with positive upper density.
Then there exists a map $\phi:\mathbb{N}\to \mathbb{N}$ so  that for every strictly increasing sequence $(k_s)$ in $\mathbb{N}$ we have
\[
\overline{\text{\emph{dens}}}(\{ n_k:\ k\in \cup_{s\ge 1} [k_s, k_s+\phi (k_s)] \}) = \overline{\text{\emph{dens}}}(n_k).
\]
\end{lemma}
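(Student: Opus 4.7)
The plan is to produce a super-fast-growing sequence of witnesses for the upper density of $(n_k)$ and then choose $\phi$ so that each interval $[m, m+\phi(m)]$ reaches well past the next witness. Let $d := \overline{\text{dens}}(\{n_k : k\ge 1\}) > 0$. I would start by picking a strictly increasing sequence of positive integers $(N_r)$ with
\[
\frac{\#(\{n_k\}\cap[0,N_r])}{N_r+1} \xrightarrow[r\to\infty]{} d,
\]
and set $K_r := \#(\{n_k\}\cap[0, N_r])$. Since $n_{K_r}\le N_r$ one has $K_r/(n_{K_r}+1)\ge K_r/(N_r+1)\to d$, and the reverse inequality $\limsup_r K_r/(n_{K_r}+1)\le d$ is automatic, so $K_r/(n_{K_r}+1)\to d$. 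After passing to a subsequence I may further assume $K_{r+1}\ge 2^{r} K_r$ and $K_r/(n_{K_r}+1)\ge d-1/r$ for every $r$.

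I would then define
\[
\phi(m) := K_{r(m)+1} - m, \qquad r(m) := \min\{r\ge 1 : K_r > m\}.
\]
Since $K_r\to\infty$, $r(m)$ is well-defined, and $K_{r(m)+1} > K_{r(m)} > m$ gives $\phi(m)\ge 1$. Geometrically, the interval $[m, m+\phi(m)] = [m, K_{r(m)+1}]$ straddles two consecutive witnesses, the right one being so much larger than the left one (thanks to $K_{r+1}/K_r\to\infty$) that the left endpoint $m$ itself becomes negligible compared to $K_{r(m)+1}$.

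To verify the conclusion, I would fix any strictly increasing $(k_s)$. Since for each fixed $r$ the integer $K_r$ is eventually exceeded by $k_s$, one has $r(k_s)\to\infty$. Setting $I := \bigcup_{s\ge 1}[k_s, K_{r(k_s)+1}]$ and $A := \{n_k : k\in I\}$, the inclusion $A\subseteq\{n_k: k\ge 1\}$ forces $\overline{\text{dens}}(A)\le d$. For the reverse bound I would test at $N_s := n_{K_{r(k_s)+1}}$: since $A\cap[0,N_s]$ contains $n_{k_s}, n_{k_s+1},\dots, n_{K_{r(k_s)+1}}$,
\[
\frac{\#(A\cap[0,N_s])}{N_s+1}\ \ge\ \frac{K_{r(k_s)+1}}{n_{K_{r(k_s)+1}}+1}\ -\ \frac{k_s-1}{n_{K_{r(k_s)+1}}+1}.
\]
The first summand is at least $d-1/(r(k_s)+1)\to d$. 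The second is bounded by $(k_s-1)/K_{r(k_s)+1}$ (using $n_K\ge K$), and the growth estimate $K_{r(k_s)+1}\ge 2^{r(k_s)} K_{r(k_s)} > 2^{r(k_s)} k_s$ forces it to zero. Hence $\overline{\text{dens}}(A)\ge d$, and equality follows.

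The main (mild) obstacle is choosing the right $\phi$: the naive guess $\phi(m) = K_{r(m)} - m$ fails whenever $k_s$ is taken just below $K_{r(k_s)}$, since in that case $k_s/K_{r(k_s)}$ need not vanish. Pushing the interval one witness further to $K_{r(m)+1}$, together with the super-geometric growth of $(K_r)$, is precisely what makes the left endpoint negligible and guarantees that the upper density of the restricted set reaches $d$.
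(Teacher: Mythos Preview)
Your argument is correct. The verification that $K_r/(n_{K_r}+1)\to d$, the subsequencing to enforce super-geometric growth, and the density estimate at the test points $N_s=n_{K_{r(k_s)+1}}$ all go through as written.

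The paper's route is somewhat different and more direct. Rather than fixing a global sequence of checkpoints $(K_r)$, it defines $\phi$ pointwise: for each $k$ it uses the elementary fact that
\[
\limsup_{l\to\infty}\frac{l+1-k}{n_l}=\delta
\]
to pick $\phi(k)$ with $\dfrac{\phi(k)+1}{n_{k+\phi(k)}}\ge \delta-\dfrac{\delta}{k}$. The lower bound $\overline{\text{dens}}(A)\ge\delta$ then drops out immediately from the single interval $[k_s,k_s+\phi(k_s)]$, with no need to control the left endpoint separately. Your approach trades this local construction for a global one: you force every interval $[m,m+\phi(m)]$ to end at a fixed checkpoint $K_{r(m)+1}$, and then use the growth $K_{r+1}\ge 2^r K_r$ to kill the contribution of the left endpoint. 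This costs you an extra subsequencing step and a second error term to estimate, but the idea of ``anchoring at witnesses'' is perhaps more transparent conceptually, and it makes explicit why a naive choice like $\phi(m)=K_{r(m)}-m$ fails. Both arguments are short; the paper's is marginally slicker because it avoids the growth hypothesis altogether.
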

\begin{proof} Let  $\delta = \overline{\text{dens}}(n_k)$, and choose $\phi:\mathbb{N}\to \mathbb{N}$ such that
for any $k\ge 1$,
 \[\frac{\phi(k)+1}{n_{k+\phi(k)}}\ge\delta-\frac{\delta}{k}.\]
Such a map $\phi$ exists because for any $k\ge 1$,
\[\delta=\limsup_{N}\frac{\#\{j: n_j\in [n_k,N]\}}{N}=\limsup_{l}\frac{\#\{j: n_j\in [n_k,n_l]\}}{n_l}=\limsup_{l}\frac{l+1-k}{n_l}\]
and thus for any $k\ge 1$ there exists $\phi(k)$ such that
 \[\frac{\phi(k)+1}{n_{k+\phi(k)}}\ge\delta-\frac{\delta}{k}.\]
The upper density of the subsequence $(m_l)=\{n_k: k\in\bigcup_{s\ge 1}[k_s,k_s+\phi(k_s)]\}$ 
of $(n_k)$ equals~{$\delta$.} Indeed, $\overline{\text{dens}}(m_l)\le \overline{\text{dens}}(n_k)=\delta$
since $(m_l)$ is a subsequence of $(n_k)$,  and on the other hand
\begin{align*}
\overline{\text{dens}}(m_l)&=\limsup_{N}\frac{\#\{j: m_j\in [0,N]\}}{N}\\
&\ge \limsup_{s}\frac{\#\{j: m_j\in [n_{k_s},n_{k_s+\phi(k_s)}]\}}{n_{k_s+\phi(k_s)}}\\
&= \limsup_{s}\frac{\phi(k_s)+1}{n_{k_s+\phi(k_s)}}\ge \limsup_{s}\Big(\delta-\frac{\delta}{k_s}\Big)=\delta.
\end{align*}
So Lemma~\ref{L:52} follows.
\end{proof}

\begin{proof}[Proof of Theorem~\ref{UMk}]\   Let $\phi$ as in Lemma~\ref{L:52}. By
 Corollary~\ref{NiceMnone}, there exists a closed infinite-dimensional subspace $M_0$ of $X$ and a strictly increasing sequence $(k_s)$ in $\mathbb{N}$ such that  for any $x\in M_0$,
 \[
 T_{n_k}x\xrightarrow[k\to\infty]{k\in I} 0
 \]
where $I=\cup_{s\ge 0} [k_s, k_s+\phi (k_s)]$.  Since $(n_k)$ is a sequence of positive upper density, the subsequence $(m_l)=\{n_k:k\in I\}$ has positive upper density (Lemma~\ref{L:52}) and  the conclusion now follows from Theorem~\ref{UM0}.
\end{proof}

If we consider an operator $T\in L(X)$ and its iterates, we can consider the following version of the Frequent Universality Criterion.

\begin{theorem}[{\bf Frequent Hypercyclicity Criterion}]
Let $X$ be a separable Fr\'{e}chet space and $T\in L(X)$. If there are a dense subset $X_0$\ of $X$ and $S_k:X_0\rightarrow X$, $k\ge 1$, such that for each $x\in X_0$,
\begin{enumerate}[\upshape 1.]
\item $\sum_{n=1}^{\infty}T^nx$ converges unconditionally,
\item $\sum_{n=1}^{\infty}S_nx$ converges unconditionally,
\item $T^mS_nx=S_{n-m}x$ for any $m< n$, and
\item $T^nS_nx=x$ for any $n\ge 1$.
\end{enumerate}
Then $T$ is frequently hypercyclic.
\end{theorem}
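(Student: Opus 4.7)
The plan is to derive the Frequent Hypercyclicity Criterion from the already established Frequent Universality Criterion (Theorem~\ref{T:FHC}), applied to the sequence of iterates $(T^n)_{n\ge 0}$ with $Y=X$, $Y_0=X_0$, and the given family $\{S_k\}_{k\ge 1}$ extended by setting $S_0:=\mathrm{id}_{X_0}$. All the work will then be purely algebraic: collapsing each of the composite expressions appearing in Theorem~\ref{T:FHC} into a simple series depending only on $T$ or only on $S$.

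The main step is to verify, one at a time, the four hypotheses of Theorem~\ref{T:FHC}. Condition 1 ($\sum_{n\ge 0}S_n x$ unconditional) is immediate from hypothesis 2 together with $S_0 x = x$. The crucial observation for the remaining conditions is the identity
\[
T^{k}S_{j}x=T^{k-j}x \quad (0\le j\le k, \ x\in X_0),
\]
obtained by applying $T^{k-j}$ to $T^{j}S_{j}x=x$ (hypothesis 4). Combined with hypothesis 3, this collapses the backward sum in FUC condition 2 to
\[
\sum_{n=1}^{k}T^{k}S_{k-n}x=\sum_{n=1}^{k}T^{n}x,
\]
a partial sum of the unconditionally convergent series $\sum_{n\ge 1}T^n x$ provided by hypothesis 1. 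Hypothesis 3 alone collapses the forward sum in FUC condition 3 to
\[
\sum_{n=1}^{\infty}T^{k}S_{k+n}x=\sum_{n=1}^{\infty}S_{n}x,
\]
which is independent of $k$ and converges unconditionally by hypothesis 2. Finally, FUC condition 4 is exactly hypothesis 4 (it even holds with equality, not merely in the limit).

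The only real subtlety is the \emph{uniformity in $k$} that Theorem~\ref{T:FHC} demands for the unconditional convergence in its conditions 2 and 3. Condition 3 becomes trivial after the collapse above, since the resulting series no longer depends on $k$. For condition 2, I would observe that for any finite $F\subset\N$ and any continuous seminorm $q$ on $X$,
\[
q\Bigl(\sum_{n\in F\cap [1,k]}T^{n}x\Bigr)\le \sup_{F'\subset F}q\Bigl(\sum_{n\in F'}T^{n}x\Bigr),
\]
and the right-hand side can be made arbitrarily small by choosing $\min F$ large, thanks to the unconditional convergence of $\sum_{n\ge 1}T^n x$; this bound is uniform in $k$. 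Once the four hypotheses of the Frequent Universality Criterion are checked, Theorem~\ref{T:FHC} yields that $(T^n)_{n\ge 0}$ is frequently hypercyclic, which is the definition of $T$ being frequently hypercyclic. No genuine obstacle is expected; the proof is essentially bookkeeping once the reduction is chosen.
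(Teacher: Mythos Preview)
Your proposal is correct and matches the paper's own treatment: the paper does not give a detailed proof of this statement but simply remarks, immediately after stating it, that if $T$ satisfies the Frequent Hypercyclicity Criterion then its sequence of iterates $(T^n)$ satisfies the Frequent Universality Criterion (Theorem~\ref{T:FHC}). Your verification of the four conditions of Theorem~\ref{T:FHC} via the collapses $T^{k}S_{k-n}x=T^{n}x$ and $T^{k}S_{k+n}x=S_{n}x$ is exactly the computation underlying that remark, and your handling of the uniformity in $k$ is correct.
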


We remark that if $T$ satisfies the Frequent Hypercyclicity Criterion, then its sequence of iterates $(T^n)$ satisfies the Frequent Universality Criterion and there exists a dense subset $X_0$ of $X$ such that $T^{n}x \to 0$ for each $x\in X_0$. Therefore, we obtain the following version of the Criterion~$(M_k)$ for $\mathcal{U}$-frequently hypercyclic subspaces.

\begin{theorem}\label{UMkT}
Let $X$ be a separable infinite-dimensional Fr\'{e}chet space with a continuous norm and $T\in L(X)$ an operator satisfying the Frequent Hypercyclicity Criterion. 
Suppose that there exists a strictly increasing sequence of positive integers $(n_k)$ of positive upper density such that
$(T^{n_k})_{k\ge 1}$ is equicontinuous along some non-increasing sequence $(M_k)$ of infinite-dimensional closed subspaces of $X$. Then $T$ possesses a $\mathcal{U}$-frequently hypercyclic subspace.
\end{theorem}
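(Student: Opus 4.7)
My plan is to apply Theorem~\ref{UMk} directly to the sequence of iterates $(T^n)_{n\ge 1}$, viewed as a sequence in $L(X,X)$. The three hypotheses of that theorem are: the Frequent Universality Criterion for $(T^n)$, pointwise convergence $T^{n_k}x\to 0$ on a dense subset $X_0$, and equicontinuity of $(T^{n_k})$ along a non-increasing sequence of closed infinite-dimensional subspaces. The third condition is precisely the assumption of Theorem~\ref{UMkT}, and the positive upper density of $(n_k)$ is also given, so the task reduces to verifying the first two conditions from the FHC. This is already asserted in the remark preceding the theorem, so the main work is simply to make that reduction explicit.

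To unpack the reduction, let $X_0$ and $(S_k)_{k\ge 1}$ be the dense subset and the right-inverses supplied by the FHC for $T$, and take the same data $X_0$, $(S_k)$ as witnesses for the FUC applied to $(T^n)$. Conditions 1 and 4 of the FUC are literally conditions 2 and 4 of the FHC. For conditions 2 and 3 of the FUC, I would use the telescoping identities
\[
T^k S_{k-n}x = T^n x \quad (1\le n\le k) \qquad\text{and}\qquad T^k S_{k+n}x = S_n x \quad (n\ge 1),
\]
which follow from FHC conditions 3 and 4. These identities reduce the double-indexed series in FUC conditions 2 and 3 to the unconditional convergence of $\sum_{n\ge 1} T^n x$ and $\sum_{n\ge 1} S_n x$ provided by FHC conditions 1 and 2, with uniformity in $k$ automatic (the sums no longer depend on $k$, or are tail sums of a fixed convergent series). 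Moreover, the unconditional convergence of $\sum_{n\ge 1} T^n x$ forces $T^n x\to 0$ for every $x\in X_0$, so in particular $T^{n_k}x\to 0$, giving condition (1) of Theorem~\ref{UMk}.

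With all hypotheses of Theorem~\ref{UMk} verified for $(T^n)$, that theorem yields a closed infinite-dimensional subspace of $X$ in which every non-zero vector is $\mathcal{U}$-frequently hypercyclic for $(T^n)$, which is by definition a $\mathcal{U}$-frequently hypercyclic subspace for $T$. I do not expect any genuine obstacle: the substantive content (the construction of a basic sequence whose span absorbs the target density of return times) has already been encapsulated in Theorem~\ref{UMk}, which itself rests on Corollary~\ref{NiceMnone} and Lemma~\ref{L:52}. Theorem~\ref{UMkT} is essentially Theorem~\ref{UMk} repackaged for the iterates of a single operator, with the FHC assumed in place of its weaker sequence-of-operators analogue.
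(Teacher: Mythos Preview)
Your proposal is correct and follows exactly the approach of the paper: the paper states the remark preceding Theorem~\ref{UMkT} that the Frequent Hypercyclicity Criterion for $T$ implies the Frequent Universality Criterion for $(T^n)$ together with $T^n x\to 0$ on a dense set, and then treats Theorem~\ref{UMkT} as an immediate specialization of Theorem~\ref{UMk}. You have simply made that reduction explicit, including the telescoping identities $T^kS_{k-n}x=T^nx$ and $T^kS_{k+n}x=S_nx$ that the paper leaves to the reader.
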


In 2000,  Gonz\'{a}lez, Le\'{o}n and Montes~\cite{Gonzalez} obtained a characterization of operators with hypercyclic subspaces on complex Banach spaces. In particular, we can deduce from their proof that if $T$ is an operator on a complex Banach space satisfying the Hypercyclicity Criterion, then $T$ possesses a hypercyclic subspace if and only if $(T^k)_{k\ge 0}$ is equicontinuous along some non-increasing sequence $(M_k)$ of infinite-dimensional closed subspaces of $X$, see \cite[Theorem 3.2]{Gonzalez}. In view of Theorem~\ref{UMkT} and of the characterization of Gonz\'{a}lez, Le\'{o}n and Montes, we conclude the following.

\begin{theorem} \label{T:FHC2ufhcs}
Let $X$ be a separable infinite-dimensional complex Banach space and $T\in L(X)$ be an operator satisfying the Frequent Hypercyclicity Criterion. Then $T$ has a hypercyclic subspace if and only if it has a $\mathcal{U}$-frequently hypercyclic subspace.
\end{theorem}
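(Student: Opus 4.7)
\textbf{Proof proposal for Theorem~\ref{T:FHC2ufhcs}.}  The plan is to show that the theorem is essentially a synthesis of two prior results: the $(M_k)$-criterion for $\mathcal{U}$-frequently hypercyclic subspaces just established (Theorem~\ref{UMkT}), and the Gonz\'{a}lez--Le\'{o}n--Montes characterization of operators with hypercyclic subspaces on complex Banach spaces. Neither direction should require substantial new calculation.

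The forward implication is immediate and I would dispatch it first: any $\mathcal{U}$-frequently hypercyclic vector is in particular hypercyclic, since the return set $\{n\ge 0: T^n x\in U\}$ has positive upper density (and hence is non-empty) for each non-empty open $U$. Therefore a $\mathcal{U}$-frequently hypercyclic subspace is automatically a hypercyclic subspace.

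For the converse, suppose $T$ satisfies the Frequent Hypercyclicity Criterion and has a hypercyclic subspace. Observe first that the Frequent Hypercyclicity Criterion implies the classical Hypercyclicity Criterion (the conditions on $(S_n)$ and on the tails $\sum T^nx$ are strictly stronger). Hence the hypotheses of \cite[Theorem 3.2]{Gonzalez}, as recalled in the paragraph preceding the statement, are satisfied, and the existence of a hypercyclic subspace forces the existence of a non-increasing sequence $(M_k)$ of closed infinite-dimensional subspaces of $X$ along which $(T^k)_{k\ge 0}$ is equicontinuous.

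Now I would simply apply Theorem~\ref{UMkT} with the sequence $n_k=k$, which trivially has upper density equal to $1$. The equicontinuity of $(T^{n_k})_{k\ge 1}=(T^k)_{k\ge 1}$ along $(M_k)$ is exactly what Gonz\'{a}lez--Le\'{o}n--Montes provided, so all hypotheses of Theorem~\ref{UMkT} are met and we conclude that $T$ possesses a $\mathcal{U}$-frequently hypercyclic subspace. The only point that requires a moment's care is that FHC indeed implies the version of the Hypercyclicity Criterion used in \cite{Gonzalez}; aside from this bookkeeping step there is no genuine obstacle, since the heavy lifting has been carried out in Theorem~\ref{UMkT} and in \cite[Theorem 3.2]{Gonzalez}.
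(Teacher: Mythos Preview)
Your proposal is correct and follows exactly the paper's approach: the paper also derives the theorem immediately from Theorem~\ref{UMkT} (applied with $n_k=k$) together with the Gonz\'{a}lez--Le\'{o}n--Montes characterization, the forward implication being trivial. Your additional remark that the Frequent Hypercyclicity Criterion implies the Hypercyclicity Criterion is the only bookkeeping needed, and it is indeed standard.
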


Theorem~\ref{UMkT} also gives the following characterization of unilateral weighted shifts with $\mathcal{U}$-frequently hypercyclic subspaces on real or complex Banach spaces $\ell^p$  ($1\le p <\infty$).
\begin{cor}\label{caraclp}
Let $B_w:\ell^p\to \ell^p$ be a $\mathcal{U}$-frequently hypercyclic unilateral weighted shift.
Then $B_w$ has a $\mathcal{U}$-frequently hypercyclic subspace if and only if
 $B_w$ has a hypercyclic subspace.  Thus a unilateral backward shift $B_w$ with weight sequence $w=(w_n)$ has a $\mathcal{U}$-frequently hypercyclic subspace if and only if
$(\frac{1}{w_1\cdots w_n})\in\ell^p$ and $\sup_{n\ge 1} \inf_{k\ge 0} \prod_{\nu=1}^{n} |w_{k+\nu}|\le 1$.
\end{cor}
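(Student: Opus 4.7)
The strategy is to deduce Corollary~\ref{caraclp} from Theorem~\ref{T:FHC2ufhcs} together with the known explicit characterizations of $\mathcal{U}$-frequent hypercyclicity and of hypercyclic subspaces for unilateral weighted backward shifts on $\ell^p$.

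The forward implication of the first equivalence is immediate, since every $\mathcal{U}$-frequently hypercyclic vector is hypercyclic, so any $\mathcal{U}$-frequently hypercyclic subspace is a hypercyclic subspace. For the converse, the plan is to verify that under the $\mathcal{U}$-frequent hypercyclicity hypothesis, $B_w$ satisfies the Frequent Hypercyclicity Criterion; Theorem~\ref{T:FHC2ufhcs} then yields a $\mathcal{U}$-frequently hypercyclic subspace as soon as $B_w$ has a hypercyclic subspace. To carry this out, I would invoke Bayart--Ruzsa's characterization \cite{Bayart3}, according to which $B_w$ being $\mathcal{U}$-frequently hypercyclic on $\ell^p$ is equivalent to $\bigl(\tfrac{1}{w_1\cdots w_n}\bigr)_n\in\ell^p$. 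Taking $X_0$ to be the dense subspace of finitely supported sequences and the right inverses $S_ne_j:=(w_{j+1}\cdots w_{j+n})^{-1}e_{j+n}$, conditions (1) and (3) of the Frequent Hypercyclicity Criterion hold because $B_w^nx=0$ eventually for $x\in X_0$, while (2) reduces to the unconditional convergence of $\sum_n (w_{j+1}\cdots w_{j+n})^{-1}e_{j+n}$ in $\ell^p$ — which follows from the Bayart--Ruzsa condition applied to the shifted weight sequence, and from the fact that for scalar series in a Banach space of $\ell^p$-type with pairwise disjointly supported summands, unconditional and absolute convergence coincide.

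For the explicit characterization in the second assertion, combine the Bayart--Ruzsa equivalence with the description of hypercyclic subspaces for weighted shifts. The condition $\sup_{n\ge 1}\inf_{k\ge 0}\prod_{\nu=1}^{n}|w_{k+\nu}|\le 1$ is precisely the translation to $B_w$ of the Gonz\'{a}lez--Le\'{o}n--Montes spectral characterization \cite[Theorem~3.2]{Gonzalez} for operators satisfying the Hypercyclicity Criterion: it expresses that the essential spectrum of $B_w$ intersects the closed unit disk, and is well known to be equivalent to $B_w$ supporting a hypercyclic subspace on $\ell^p$. Putting both equivalences together gives the explicit criterion.

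The only real content is verifying the Frequent Hypercyclicity Criterion from the Bayart--Ruzsa $\ell^p$-summability; the main obstacle is ensuring the required unconditional convergences, which however follow directly from absolute $\ell^p$-summability combined with the disjointness of the supports of the vectors $e_{j+n}$. All other steps are citations or immediate consequences of the definitions, so after Theorem~\ref{T:FHC2ufhcs} is in hand the corollary is essentially a bookkeeping result.
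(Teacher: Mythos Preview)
Your argument is essentially correct and close to the paper's, but there is one subtle discrepancy worth flagging. You route the converse implication through Theorem~\ref{T:FHC2ufhcs}, which is stated only for \emph{complex} Banach spaces (it rests on the Gonz\'{a}lez--Le\'{o}n--Montes spectral characterization). The paper, by contrast, applies Theorem~\ref{UMkT} directly: from the existence of a hypercyclic subspace for $B_w$ one extracts (via \cite{LeonMontes,Menet1}) a non-increasing sequence $(M_k)$ of infinite-dimensional closed subspaces with $\sup_{k\ge 1}\|B_w^k|_{M_k}\|<\infty$, and this equicontinuity feeds straight into Theorem~\ref{UMkT}. That route works over both $\R$ and $\C$, which matters since the corollary is intended for real or complex $\ell^p$. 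Your detour through Theorem~\ref{T:FHC2ufhcs} therefore loses the real case unless you revert to the underlying $(M_k)$ argument anyway.

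A second, minor point: the paper does not re-verify the Frequent Hypercyclicity Criterion by hand as you do, but simply invokes Bayart--Ruzsa~\cite{Bayart3} for the implication ``$B_w$ $\mathcal{U}$-frequently hypercyclic on $\ell^p$ $\Rightarrow$ $B_w$ satisfies the FHC''. Your sketch of this verification is fine (and the shifted-weight computation you outline is correct), but it is not needed. Similarly, for the second assertion the paper cites \cite{LeonMontes,Menet1} directly for the hypercyclic-subspace condition $\sup_{n}\inf_{k}\prod_{\nu=1}^{n}|w_{k+\nu}|\le 1$, rather than passing through the essential-spectrum formulation.
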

\begin{proof}
It is immediate that $B_w$ has a hypercyclic subspace if it has a $\mathcal{U}$-frequently hypercyclic subspace. Conversely, if $B_w$ has a hypercyclic subspace we can deduce from \cite{LeonMontes, Menet1} that there exists a non-increasing sequence of infinite-dimensional closed subspaces $(M_k)$ in $X$ such that  
\[\sup_{k\ge 1}\|B^k_{w|M_k}\|<\infty.\]
We also know thanks to Bayart and Ruzsa \cite{Bayart3} that if $B_w:\ell^p\to \ell^p$ is a $\mathcal{U}$-frequently hypercyclic weighted shift, then $B_w$ satisfies the Frequent Hypercyclicity Criterion. We therefore conclude by Theorem~\ref{UMkT} that $B_w$ has a $\mathcal{U}$-frequently hypercyclic subspace. The second assertion follows now from the facts that $B_w$ is $\mathcal{U}$-frequently hypercyclic on $\ell^p$ if and only if $(\frac{1}{w_1\cdots w_n})\in\ell^p$ \cite{Bayart3} and that $B_w$ has a hypercyclic subspace if and only if $\sup_{n\ge 1} \inf_{k\ge 0} \prod_{\nu=1}^{n} |w_{k+\nu}|\le 1$ \cite{LeonMontes, Menet1}.
\end{proof}
This result may seem surprising given that a weighted shift on $\ell^p$ is $\mathcal{U}$-frequently hypercyclic if and only if it is frequently hypercyclic \cite{Bayart3}, and there exist frequently hypercyclic weighted shifts with hypercyclic subspaces and no frequently hypercyclic subspace~\cite{Menet2}. In particular, we deduce the following.

\begin{cor} \label{C:ufhcsNofhcs}
There exists a  frequently hypercyclic operator that has $\mathcal{U}$-frequently hypercyclic subspaces but has no frequently hypercyclic subspace.
\end{cor}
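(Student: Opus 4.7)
The plan is to combine Corollary~\ref{caraclp} with the cited existence result of Menet~\cite{Menet2} to produce the desired operator directly. The whole point is that the recipe for a $\mathcal{U}$-frequently hypercyclic subspace in Corollary~\ref{caraclp} requires only (a) $\mathcal{U}$-frequent hypercyclicity and (b) the existence of a hypercyclic subspace, while the existence of a frequently hypercyclic subspace is strictly more demanding; this gap is exactly what was exploited in \cite{Menet2}.

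First, I would invoke \cite{Menet2} to fix a frequently hypercyclic unilateral weighted backward shift $B_w$ on $\ell^p$ (for some $1\le p <\infty$) which possesses a hypercyclic subspace but has no frequently hypercyclic subspace. Second, since $B_w$ is frequently hypercyclic, the result of Bayart and Ruzsa~\cite{Bayart3} quoted in the paper (a weighted shift on $\ell^p$ is $\mathcal{U}$-frequently hypercyclic iff it is frequently hypercyclic) guarantees that $B_w$ is in particular $\mathcal{U}$-frequently hypercyclic. Third, since $B_w$ is $\mathcal{U}$-frequently hypercyclic \emph{and} admits a hypercyclic subspace, Corollary~\ref{caraclp} immediately provides a $\mathcal{U}$-frequently hypercyclic subspace for $B_w$. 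Thus $B_w$ is the desired operator: it is frequently hypercyclic, it supports a $\mathcal{U}$-frequently hypercyclic subspace, but it supports no frequently hypercyclic subspace.

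There is essentially no obstacle beyond correctly quoting the three ingredients in the right order; the corollary is a bookkeeping consequence of Corollary~\ref{caraclp} together with \cite{Menet2}. The only mild subtlety worth flagging is that one must verify that the particular example constructed in \cite{Menet2} is indeed a weighted shift on some $\ell^p$ (so that both Bayart--Ruzsa and Corollary~\ref{caraclp} apply), which is the case. Consequently, the proof reduces to a single sentence of assembly.
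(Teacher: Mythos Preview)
Your proposal is correct and matches the paper's own argument essentially verbatim: the corollary is deduced from Corollary~\ref{caraclp} together with the example from \cite{Menet2} of a frequently hypercyclic weighted shift on $\ell^p$ with a hypercyclic subspace and no frequently hypercyclic subspace. One cosmetic remark: you do not need Bayart--Ruzsa to pass from frequently hypercyclic to $\mathcal{U}$-frequently hypercyclic, since positive lower density trivially implies positive upper density.
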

The above results motivate the following.

\begin{problem}: Does there exist a $\mathcal{U}$-frequently hypercyclic operator possessing hypercyclic subspaces but no $\mathcal{U}$-frequently hypercyclic subspace? What if the operator is frequently hypercyclic?
\end{problem}

We complement our study of the existence of $\mathcal{U}$-frequently hypercyclic subspaces for weighted shifts by showing that every $\mathcal{U}$-frequently hypercyclic bilateral weighted shift on $\ell^p(\Z)$ possesses a frequently hypercyclic subspace and thus a $\mathcal{U}$-frequently hypercyclic subspace.

\begin{theorem}\label{caraclpbi}
Let $B_w$ be a bilateral weighted shift on $\ell^p(\Z)$. If $B_w$ is $\mathcal{U}$-frequently hypercyclic, then $B_w$ possesses a frequently hypercyclic subspace.
\end{theorem}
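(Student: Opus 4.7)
The plan is to invoke Bonilla and Grosse-Erdmann's Theorem~\ref{T:FHCS} for the sequence of iterates $(B_w^n)_{n\ge 0}$. Two ingredients are needed: that $(B_w^n)$ satisfies the Frequent Universality Criterion, and that there exists a closed, infinite-dimensional subspace $M_0\subseteq\ell^p(\Z)$ on which $B_w^n\to 0$ pointwise.

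For the first ingredient, by the Bayart--Ruzsa characterization \cite{Bayart3} every $\mathcal{U}$-frequently hypercyclic weighted shift on $\ell^p(\Z)$ is actually frequently hypercyclic and even satisfies the Frequent Hypercyclicity Criterion, so $(B_w^n)$ satisfies the Frequent Universality Criterion. A byproduct that we will use below is that, setting $b_0:=1$ and $b_n:=|w_0 w_{-1}\cdots w_{-n+1}|$ for $n\ge 1$, one has $\sum_n b_n^p<\infty$; in particular $b_n\to 0$.

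The bulk of the work is the construction of $M_0$. The naive candidate $\overline{\mathrm{span}}\{e_{-n}:n\ge 1\}$ fails in general, because the norms $\|B_w^j e_{-m}\|=b_{m+j}/b_m$ need not be uniformly bounded in $m$, preventing dominated convergence on $\ell^p$-sums. The key idea is to select a subsequence of basis vectors along which these ratios are uniformly controlled. Using only that $b_n\to 0$, I would inductively pick a strictly increasing sequence $(N_k)\subseteq\N$ with the \emph{record} property
\[ b_m\le 2\,b_{N_k}\qquad\text{for all } m\ge N_k,\ k\ge 1, \]
by choosing each $b_{N_k}$ larger than half of $\sup_{m\ge N_{k-1}+1}b_m$ (which is strictly positive, otherwise $w_j=0$ for some $j$, contradicting injectivity of $B_w$). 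Then set $M_0:=\overline{\mathrm{span}}\{e_{-N_k}:k\ge 1\}$, which is a closed infinite-dimensional subspace of $\ell^p(\Z)$.

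For any $x=\sum_k a_k e_{-N_k}\in M_0$, the iterate $B_w^j x$ is supported at the pairwise distinct indices $\{-N_k-j\}_k$, so
\[ \|B_w^j x\|_p^p=\sum_{k\ge 1}|a_k|^p\,(b_{N_k+j}/b_{N_k})^p. \]
Each summand is uniformly dominated by $2^p|a_k|^p$ (by the record property) and tends to $0$ as $j\to\infty$ (since $b_n\to 0$), so dominated convergence over $\N$ gives $B_w^j x\to 0$. Applying Theorem~\ref{T:FHCS} then delivers a frequently hypercyclic subspace for $B_w$. The main obstacle is precisely this uniform boundedness of the ratios $b_{m+j}/b_m$, which fails on the whole negative half but is restored by the record-selection of the indices $N_k$.
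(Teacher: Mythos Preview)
Your proof is correct and follows essentially the same approach as the paper: both verify the Frequent Hypercyclicity Criterion via Bayart--Ruzsa, then construct $M_0$ as the closed span of selected basis vectors $e_{-N_k}$ chosen so that the ratios $b_{m}/b_{N_k}$ stay bounded for $m\ge N_k$, and conclude via dominated convergence and Theorem~\ref{T:FHCS}. The only technical difference is in the selection of indices: the paper argues that the set $\{n\ge 0:\prod_{\nu=0}^n|w_{-k_0-\nu}|>1\}$ is finite and thereby finds exact records $b_m\le b_{k_j}$ for $m>k_j$, whereas your approximate-record choice yields $b_m\le 2b_{N_k}$; both variants work equally well.
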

\begin{proof}
Let $B_w$ be a $\mathcal{U}$-frequently hypercyclic bilateral weighted shift on $\ell^p(\Z)$. We already know that $B_w$ satisfies the Frequent Hypercyclicity Criterion~\cite{Bayart3}. In view of Theorem~\ref{UM0}, it thus suffices to prove that there exists an infinite-dimensional closed subspace $M_0$ such that for any $x\in M_0$
\[B_w^{n}x\xrightarrow[n\to \infty]{} 0.\]
Since $B_w$ is $\mathcal{U}$-frequently hypercyclic, we also know that
$\sum_{n\le 0}|w_{0}\cdots w_n|^p<\infty$ ~\cite{Bayart3}. In particular, we have for any $k\ge 1$ 
\begin{equation}\label{wsUb}
\prod_{\nu=0}^{n}|w_{-k-\nu}|=\frac{\prod_{\nu=0}^{k+n}|w_{-\nu}|}{\prod_{\nu=0}^{k-1}|w_{-\nu}|}\xrightarrow[n\to \infty]{} 0.
\end{equation}
Let $k_0\ge 1$. We show that there exists $k_1\ge k_0$ such that for any $n\ge 0$,
\begin{equation*}
\prod_{\nu=0}^{n}|w_{-k_1-\nu}|\le 1.
\end{equation*}
Indeed, either for any $n\ge 0$, we have $\prod_{\nu=0}^{n}|w_{-k_0-\nu}|\le 1$ and we can consider $k_1=k_0$, or the set $F:=\{n\ge 0:\prod_{\nu=0}^{n}|w_{-k_0-\nu}|>1\}$ is non-empty.
We then remark that if $n\in F$, we have
\[\prod_{\nu=0}^{k_0+n}|w_{-\nu}|=\Big(\prod_{\nu=0}^{n}|w_{-k_0-\nu}|\Big)\Big(\prod_{\nu=0}^{k_0-1}|w_{-\nu}|\Big)>\prod_{\nu=0}^{k_0-1}|w_{-\nu}|.\]
We deduce from \eqref{wsUb} that $F$ has to be finite. Let $n_0:=\max F$ and $k_1=k_0+n_0+1$. We then have $k_1\ge k_0$ and for any $n\ge 0$,
\[\prod_{\nu=0}^{n}|w_{-k_1-\nu}|=\frac{\prod_{\nu=0}^{n_0+n+1}|w_{-k_0-\nu}|}{\prod_{\nu=0}^{n_0}|w_{-k_0-\nu}|}\le 1.\]
We conclude that there exists an increasing sequence of positive integers $(k_j)_{j\ge 0}$ such that for any $j\ge 0$, any $n\ge 0$, 
\begin{equation}\label{wsUb2}
\prod_{\nu=0}^{n}|w_{-k_j-\nu}|\le 1.
\end{equation}
Let $M_0:=\overline{\text{span}}\{e_{-k_j}:j\ge 0\}$ and $x\in M_0$. We have $x=\sum_{j=0}^{\infty}a_je_{-k_j}$ and for any $n\ge 1$, any $J\ge 0$, we deduce from \eqref{wsUb} and \eqref{wsUb2} that
\begin{align*}
\|B_w^nx\|^p&=\sum_{j=0}^{\infty}\Big(\prod_{\nu=0}^{n-1}|w_{-k_j-\nu}|\Big)^p |a_j|^p
\\
&\le \sum_{j=0}^J\Big(\prod_{\nu=0}^{n-1}|w_{-k_j-\nu}|\Big)^p |a_j|^p+ \sum_{j=J+1}^{\infty}|a_j|^p
\xrightarrow[n\to \infty]{}\sum_{j=J+1}^{\infty}|a_j|^p.
\end{align*}
We thus have $\|B_w^n x\|\to 0$ for any $x\in M_0$ and we conclude by using Theorem~\ref{UM0}.
\end{proof}

We finish this section by investigating the operators $P(D)$ on the space $H(\C)$ of entire functions where $P$ is a non-constant polynomial and $D$ is the derivative operator.
\begin{cor}\label{C:P(D)ufhs}
Let $D:H(\C)\to H(\C)$ be the derivative operator and $P$ a non-constant polynomial.
Then $P(D)$ has a $\mathcal{U}$-frequently hypercyclic subspace.
\end{cor}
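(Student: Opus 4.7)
The approach is to apply Theorem~\ref{UMkT} to the operator $T=P(D)$ on the Fr\'{e}chet space $X=H(\C)$, which carries a continuous norm, using the trivial choice $n_k = k$, which has upper density $1$. This reduces the corollary to two verifications: that $P(D)$ satisfies the Frequent Hypercyclicity Criterion, and that $(P(D)^k)_{k\ge 1}$ is equicontinuous along some non-increasing sequence $(M_k)$ of infinite-dimensional closed subspaces of $H(\C)$. The first verification is classical: by Bonilla and Grosse-Erdmann~\cite{Bonilla3}, every non-scalar convolution operator on $H(\C)$ satisfies the Frequent Hypercyclicity Criterion, and $P(D)$ is such an operator since $P$ is non-constant.

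For the second verification, the fundamental theorem of algebra furnishes a root $\mu_0\in\C$ of $P$, so that the open set $U := \{\lambda\in\C : |P(\lambda)| < 1\}$ is a non-empty bounded neighborhood of $\mu_0$. Each $\lambda\in U$ gives rise to an eigenvector $e^{\lambda z}$ of $P(D)$ whose eigenvalue $P(\lambda)$ has modulus strictly less than $1$. By the inductive basic-sequence construction available in any Fr\'{e}chet space with a continuous norm (see~\cite{Menet1,Petersson}), one builds a basic sequence $(u_j)_{j\ge 1}$ in $H(\C)$ whose terms are finite linear combinations of exponentials $e^{\lambda z}$ with $\lambda\in U$ --- for instance, suitably normalized divided differences of such exponentials. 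Set $M_k := \overline{\text{span}}\{u_j : j\ge k\}$. This yields a non-increasing sequence of infinite-dimensional closed subspaces contained in the $P(D)$-invariant subspace $\overline{\text{span}}\{e^{\lambda z}:\lambda\in U\}$, and combining the bound $|P(\lambda)|<1$ on $U$ with the basic-sequence structure of $(u_j)$ yields an equicontinuity estimate of the form $\|P(D)^k f\|_R \le C_R \|f\|_{R'}$ for every $k \ge 1$ and every $f\in M_k$, where $\|\cdot\|_R$ denotes the sup-norm on $\{|z|\le R\}$.

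The main technical obstacle is the basic-sequence construction just described: since $\{e^{\lambda z}:\lambda\in\overline U\}$ is relatively compact in $H(\C)$ by Arzel\`a-Ascoli, no naive choice of distinct $\lambda_j \in U$ makes $(e^{\lambda_j z})$ basic, and one must pass to linear combinations (such as divided differences) to build the $u_j$'s. This step is not new: it underlies the existing proofs that $P(D)$ has a hypercyclic subspace via Criterion~$(M_k)$ (Theorem~\ref{thm Mk}), as carried out in~\cite{Menet1,Petersson}. Once the two hypotheses of Theorem~\ref{UMkT} are in place, that theorem immediately delivers a $\mathcal{U}$-frequently hypercyclic subspace for $P(D)$.
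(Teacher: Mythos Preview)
Your overall strategy is identical to the paper's: apply Theorem~\ref{UMkT} with $n_k=k$, invoke Bonilla--Grosse-Erdmann for the Frequent Hypercyclicity Criterion, and cite \cite{Menet1} for the equicontinuity of $(P(D)^k)_{k\ge 1}$ along a suitable sequence $(M_k)$. The paper's proof is equally terse, quoting the equicontinuity conclusion from \cite{Menet1} as a black box and then applying Theorem~\ref{UMkT}.

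Where your proposal diverges is in the informal sketch you insert for the equicontinuity step, and that sketch has genuine gaps. First, the inclusion $M_k\subset\overline{\mathrm{span}}\{e^{\lambda z}:\lambda\in U\}$ carries no information: since $U$ is a non-empty open subset of $\C$, that closed span is all of $H(\C)$ (differentiation in the parameter $\lambda$ at any point of $U$ recovers every monomial times a nonvanishing exponential). Second, the assertion that ``combining the bound $|P(\lambda)|<1$ on $U$ with the basic-sequence structure of $(u_j)$ yields an equicontinuity estimate'' is not justified: if $u_j$ is a high-order divided difference $\sum_i c_{j,i}e^{\lambda_{j,i}z}$, the coefficients $c_{j,i}$ can be arbitrarily large, and applying $P(D)^k$ destroys the divided-difference structure in a way that the pointwise bound $|P(\lambda)|<1$ alone does not control. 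In fact the construction in \cite{Menet1} proceeds quite differently, building the $M_k$ inside the tail spaces $\overline{\mathrm{span}}\{z^n:n\ge k\}$ of monomials; the paper itself relies on this feature later, in the proof of Proposition~\ref{P:P(D)common}.

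Since you ultimately defer to \cite{Menet1,Petersson} for the construction, your proof stands as a citation argument exactly like the paper's; but the eigenvector sketch is misleading as written and should either be dropped or replaced by the monomial-based description that actually appears in those references.
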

\begin{proof}
We know that $P(D)$ satisfies the Frequently Hypercyclicity Criterion~\cite{Bonilla}. It is also shown in~\cite{Menet1} that if $P(D)$ satisfies the Hypercyclicity Criterion along a given strictly increasing sequence $(n_k)$, then there exists
 a non-increasing sequence of infinite-dimensional closed subspaces $(M_k)$ in $H(\C)$ such that for any $j\ge 1$, there exist a positive number $C_{j}$ and two integers $m(j),k(j)$
such that for any $k\ge k(j)$, any $x\in M_k$,
\[p_j(P(D)^{n_k}x)\le C_{j} p_{m(j)}(x).\]
In other words, it is shown that $(P(D)^{n_k})$ is equicontinuous along $(M_k)$.
Since $P(D)$ satisfies the Frequent Hypercyclicity Criterion, $P(D)$ satisfies the Hypercyclicity Criterion along the whole sequence $(k)$ and we can conclude by applying Theorem~\ref{UMkT} along the whole sequence $(n_k)=(k)$.
\end{proof}

Thanks to the following result by Delsarte and Lions, Corollary~\ref{C:P(D)ufhs} extends 
to
linear differential operators of finite order whose coefficients -except the leading one- may be non-constant.

\begin{lemma} {\rm (Delsarte and Lions \cite{Delsarte})} \label{L:Delsarte}
Let $T:H(\mathbb{C})\to H(\mathbb{C})$ be a differential operator of the form $T=D^N+a_{N-1}(z) D^{N-1}+\cdots + a_0(z) I$, where $N\ge 1$ and $a_j\in H(\mathbb{C})$ for $1\le j\le N$. Then there exists an onto isomorphism $U:H(\mathbb{C})\to H(\mathbb{C})$ so that $UT=D^NU$.
\end{lemma}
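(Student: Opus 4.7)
The plan is to realize the intertwiner $U$ as the change-of-basis operator between a Schauder basis of $H(\C)$ adapted to $T$ and the monomial basis $\psi_n(z) := z^n/n!$ adapted to $D^N$. The basis adapted to $T$ is built by inverting $T$ repeatedly on vanishing initial data; once we know that this basis is equivalent to the monomial basis, the intertwining identity $UT = D^N U$ reduces to a check on basis vectors.

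First I would invoke the Cauchy theorem in the analytic category: because the leading coefficient of $T$ is the non-vanishing constant $1$ and the remaining coefficients $a_0(z), \ldots, a_{N-1}(z)$ are entire, for every $g \in H(\C)$ and every tuple $(c_0, \ldots, c_{N-1}) \in \C^N$ there exists a unique $f \in H(\C)$ solving $Tf = g$ with $f^{(j)}(0) = c_j$ for $0 \le j < N$, and $f$ depends continuously on the data. From this I extract (i) a basis $f_0, \ldots, f_{N-1}$ of $\ker T$ normalized by $f_i^{(j)}(0) = \delta_{ij}$, and (ii) a continuous right inverse $S \in L(H(\C))$ with $TS = I$ and $(Sg)^{(j)}(0) = 0$ for all $j < N$.

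Next I would set $\varphi_i := f_i$ for $0 \le i < N$ and $\varphi_{n+N} := S\varphi_n$ for $n \ge 0$. By construction $T\varphi_n = \varphi_{n-N}$ when $n \ge N$ and $T\varphi_n = 0$ otherwise, mirroring precisely the action $D^N \psi_n = \psi_{n-N}$ (with $\psi_n := 0$ for $n<0$). The crux is then to show that $(\varphi_n)_{n\ge 0}$ is a Schauder basis of $H(\C)$ which is \emph{equivalent} to $(\psi_n)$, meaning the formal assignment $U^{-1}\psi_n := \varphi_n$ extends to a topological automorphism of $H(\C)$. Granted this, $U$ is the required intertwiner, since $UT$ and $D^N U$ then agree on every $\varphi_n$ and hence on all of $H(\C)$ by continuity.

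The main obstacle is the two-sided growth estimate $c_R R^n/n! \le \|\varphi_n\|_R \le C_R R^n/n!$ for every $R>0$, where $\|\cdot\|_R$ is the sup-norm on the disk $|z|\le R$. The upper bound follows by writing $S$ as $N$-fold integration against a Picard-type correction absorbing the lower-order perturbation $B := a_{N-1}(z) D^{N-1} + \cdots + a_0(z) I$, and by using that on a fixed disk $N$-fold integration reduces sup-norms by a factor comparable to $R^N/((n+1)\cdots(n+N))$. The lower bound follows symmetrically by applying $T$ in place of $S$ to extract $\psi_{n-N}$-type growth from $\varphi_n$ modulo the finite-dimensional kernel. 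Once these estimates are in place, continuity of $U$ and $U^{-1}$ on each seminorm $\|\cdot\|_R$ is immediate, and the intertwining identity $UT=D^N U$ is just the transported relation between the two bases. The delicate point is making the growth constants uniform in $n$ across all radii $R$, and this is where I expect the technical heart of the proof to lie.
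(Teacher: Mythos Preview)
The paper does not supply a proof of this lemma; it is quoted as a result of Delsarte and Lions with a citation to \cite{Delsarte} and used only as a black box to transfer Corollary~\ref{C:P(D)ufhs} and Proposition~\ref{P:P(D)common} from $D^N$ to general finite-order operators with entire lower-order coefficients. There is therefore nothing in the paper itself to compare your argument against.

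That said, your outline is a legitimate route to the result and is in the spirit of the transmutation method. One step needs more care: the lower bound $\|\varphi_n\|_R\ge c_R R^n/n!$ does not follow from ``applying $T$ in place of $S$'' as you suggest. Continuity of $T$ gives only $\|\varphi_{n-N}\|_R\le C'_R\|\varphi_n\|_R$, and iterating this introduces a factor $(C'_R)^{n/N}$ that swamps the desired $R^n/n!$. A cleaner way to close the argument is to observe, by induction on $n$ using the recurrence $T\varphi_{n+N}=\varphi_n$ together with the vanishing initial data of $S$, that the Taylor expansion of $\varphi_n$ begins exactly at $z^n/n!$: the coefficient of $z^{m+N}$ in $\varphi_{n+N}$ is determined by the coefficient of $z^m$ in $\varphi_n$ plus a triangular correction in strictly lower coefficients, so $\varphi_n(z)=z^n/n!+O(z^{n+1})$. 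This triangular structure shows directly that the map $U^{-1}:\psi_n\mapsto\varphi_n$ is injective, and combined with your upper bound it gives continuity of $U^{-1}$. Surjectivity can then be obtained either by a recursive subtraction (peel off the leading Taylor coefficient at each step) or, once injectivity and continuity are in hand, by noting that the range is closed and dense; the open mapping theorem then yields continuity of $U$ without a separate two-sided seminorm estimate.
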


\begin{cor} \label{C:ufhsNonConstant}
For each $N\ge 1$ and $a_0,\dots, a_{N-1}\in H(\mathbb{C})$, the differential operator
\[
T=D^N+a_{N-1}(z) D^{N-1}+\cdots + a_0(z) I: H(\mathbb{C})\to H(\mathbb{C})
\]
has an $\mathcal{U}$-frequently hypercyclic subspace.
\end{cor}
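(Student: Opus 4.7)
The plan is to reduce the non-constant coefficient case to the constant coefficient case $P(D)$ already handled in Corollary~\ref{C:P(D)ufhs}, by exploiting the conjugacy provided by Lemma~\ref{L:Delsarte}. Specifically, taking $P(z)=z^N$ in Corollary~\ref{C:P(D)ufhs} gives a $\mathcal{U}$-frequently hypercyclic subspace $M$ for $D^N$ on $H(\mathbb{C})$. By Lemma~\ref{L:Delsarte}, there is a topological isomorphism $U:H(\mathbb{C})\to H(\mathbb{C})$ with $UT=D^NU$, equivalently $T=U^{-1}D^NU$, and by induction $T^n=U^{-1}(D^N)^nU$ for every $n\ge 0$. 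The candidate subspace for $T$ is then $N_0:=U^{-1}(M)$.

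First I would verify that $N_0$ is a closed, infinite-dimensional subspace of $H(\mathbb{C})$. Closedness follows because $U$ is continuous and $N_0=U^{-1}(M)$ is the preimage of the closed set $M$; infinite-dimensionality is immediate since $U$ is a bijection. Next I would check that every non-zero $x\in N_0$ is $\mathcal{U}$-frequently hypercyclic for $T$. Given a non-empty open set $W\subseteq H(\mathbb{C})$, the set $U(W)$ is non-empty and open (as $U$ is a homeomorphism), and using $T^n=U^{-1}(D^N)^nU$ we obtain
\[
\{n\ge 0:T^nx\in W\}=\{n\ge 0:(D^N)^n(Ux)\in U(W)\}.
\]
Since $y:=Ux\in M\setminus\{0\}$ is $\mathcal{U}$-frequently hypercyclic for $D^N$, the right-hand set has positive upper density, hence so does the left-hand one, proving the claim.

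There is essentially no main obstacle beyond invoking the two ingredients: Corollary~\ref{C:P(D)ufhs} supplies the subspace for $D^N$, and Lemma~\ref{L:Delsarte} supplies the conjugacy. The only point that deserves explicit mention is the standard (but crucial) fact that $\mathcal{U}$-frequent hypercyclicity of a vector, and hence of an entire closed infinite-dimensional subspace, is preserved under topological conjugacy—this is precisely the content of the density identity displayed above.
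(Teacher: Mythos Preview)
Your proposal is correct and follows exactly the approach the paper intends: the paper does not write out a proof of this corollary but simply observes that, thanks to Lemma~\ref{L:Delsarte}, Corollary~\ref{C:P(D)ufhs} extends to such operators via the conjugacy $UT=D^NU$. Your argument makes this explicit, correctly noting that $\mathcal{U}$-frequent hypercyclicity (and hence the property of being a $\mathcal{U}$-frequently hypercyclic subspace) transfers under the topological isomorphism $U$; the only cosmetic remark is that calling the new subspace $N_0$ risks a clash with the integer $N$ in the statement.
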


\section{Existence of common hypercyclic subspaces}\label{Sec common}

\subsection{Criterion~$(M_k)$ for common hypercyclic subspaces}\label{Sec Com Mk}

In 2005, Bayart~\cite{Bayart} generalized Criterion~$M_0$ to the existence of common hypercyclic subspaces by using the Common Hypercyclicity Criterion of Costakis and Sambarino~\cite{Costakis2} and by using the approach introduced by Chan~\cite{Chan} to constructing hypercyclic subspaces via left-multiplication operators. With the same approach, Grosse-Erdmann and Peris showed that such Criterion~$M_0$ for common hypercyclic subspaces remains true with their own version of the Common Hypercyclicity Criterion~\cite{Grosse}. We note that a family of operators $\{ T_{\lambda}\}_{\lambda\in \Lambda}$ satisfying the Common Hypercyclicity Criterion of Costakis and Sambarino~\cite{Costakis2} also satisfies the Common Hypercyclicity Criterion given in~\cite{Grosse} and in particular it satifies $(CHC)$ given in Definition~\ref{D:CHC}. We give next a constructive proof of Criterion~$M_0$ for common hypercyclic subspaces for families of operators satisfying $(CHC)$.

\begin{theorem}[{\bf Criterion~${\bf M_0}$ for Common Hypercyclic Subspaces}]\label{CritM0}
Let $X$ be a Fr\'{e}chet space with a continuous norm, let $Y$ be a separable Fr\'{e}chet space and let $\{ (T_{n,\lambda})_{n\ge 0} \}_{\lambda\in \Lambda}$ be a family of sequences of operators in $L(X,Y)$ satisfying (CHC) and for which there exists an infinite-dimensional closed subspace $M_0$ such that for any $(\lambda, x) \in \Lambda\times M_0$, 
\[T_{n,\lambda}x\xrightarrow[n\to \infty]{} 0.\]
Then $\{ (T_{n,\lambda})_{n\ge 0} \}_{\lambda\in \Lambda}$ has a common hypercyclic subspace.

\end{theorem}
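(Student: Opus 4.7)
\medskip

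The plan is to combine the Common Hypercyclicity Criterion with the standard basic-sequence construction of hypercyclic subspaces inside the given subspace $M_0$. The desired common hypercyclic subspace $M$ will be the closed linear span of a perturbed basic sequence $(x_i)_{i\ge 1}$, where $x_i = u_i + z_i$, $(u_i)$ is a basic sequence in $M_0$ with $p_1(u_i)=1$ and small basic constant in every seminorm (such a sequence exists in Fr\'{e}chet spaces with a continuous norm, as in the proof of Theorem~\ref{NiceMn}), and the perturbation $z_i$ is a sum of terms of the form $S_{N, \mu} y$ provided by (CHC) that forces $\sum a_i x_i$ to be common hypercyclic whenever the bounded sequence $(a_i)$ is non-zero.

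To set up the inductive construction, first fix a compact exhaustion $K_1 \subset K_2 \subset \dots$ of $\Lambda$, a countable dense subset $(y_m)$ of $Y_0$, a countable base $(V_m)$ of non-empty open subsets of $Y$, and, for each $\ell\ge 1$, a finite $1/\ell$-net $F_\ell \subset K_\ell$; using (CHC)(3) together with the joint continuity of $\lambda\mapsto T_{n,\lambda}x$, the common hypercyclic condition for $\lambda\in\bigcup_\ell K_\ell$ reduces to hitting every $V_m$ approximately along the finite nets $F_\ell$. I would then enumerate the countable family of ``target tasks'' $(m,\ell, \mu, V_m)$ with $\mu\in F_\ell$ and treat them one by one: at step $(i,l)$, having already fixed integers $N_1<\dots<N_{l-1}$ and partial perturbations, I would choose $N_l$ large and, for each basic index $i$, a vector $z_{i,l}=S_{N_l,\mu_l}(y_{m_l}/a)$ with $a$ an appropriate scalar, so that (a) the $(N_l)$-th iterate of $T_\lambda$ applied to $u_i+\sum_{l'\le l} z_{i,l'}$ nearly equals the chosen target vector when $\lambda$ is near $\mu_l$, and (b) all the tail estimates $\sum_{l'>l}$ are small thanks to the unconditional uniform convergence given by (CHC)(1)(2)(5). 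Condition~(4) of (CHC) and the hypothesis $T_{n,\lambda}x\to 0$ on $M_0$ are what let us ignore the contributions of the $u_i$ parts and of the partial sums that were fixed before step $N_l$. Setting $x_i := u_i+\sum_{l=1}^\infty z_{i,l}$ and $M:=\overline{\mathrm{span}}\{x_i:i\ge 1\}$, smallness of $z_{i,l}$ in every seminorm $p_j$ will ensure that $(x_i)$ is a basic sequence equivalent to $(u_i)$, so $M$ is a closed infinite-dimensional subspace of $X$.

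The main obstacle is uniformity in the continuous parameter $\lambda$ combined with arbitrariness of the coefficients $(a_i)$ in a vector $x=\sum a_i x_i \in M$. Given such a non-zero $x$ with $|a_i|\le K$, and given $\lambda\in K_\ell$, one must show $(T_{n,\lambda}x)_n$ is dense in $Y$. The argument splits $T_{N_l,\lambda}x$ as follows: the head $\sum_{i\le I} a_i T_{N_l,\lambda}(u_i+\sum_{l'\le l}z_{i,l'})$ is made close to $a_{i_0}y_{m_l}$ for the active index $i_0$ by choice of $N_l,\mu_l$ and CHC(3); the contribution of the earlier perturbations $\sum_{l'<l}z_{i,l'}$ of each $u_i$ is controlled by (CHC)(1); the tail in $l'>l$ is controlled by (CHC)(2) (for $i\le I$) and (CHC)(5) (for $i>I$); and $T_{N_l,\lambda}u_i\to 0$ uniformly on $K_\ell$ by the $M_0$ hypothesis. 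The delicate point is that since at least one $a_{i_0}\ne 0$, one must arrange the scaling in the target $y_{m_l}/a$ beforehand, which is handled by running the task list also over a dense set of scalars $a\in\mathbb{K}\setminus\{0\}$ and applying the standard extraction argument that the set of $x\in M$ that are common hypercyclic is a dense $G_\delta$ of $M$, hence includes some non-zero vector -- but here we need \emph{every} non-zero vector to work, which is what the careful indexing by $i$ (so that for each $i$ all tasks are handled using that coordinate as the ``witness'') secures. Once every non-zero element of $M$ is common hypercyclic for the chosen countable dense set of parameters, the joint continuity clause in (CHC) extends common hypercyclicity to all $\lambda\in\Lambda$, completing the proof.
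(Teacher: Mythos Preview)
Your proposal has a genuine gap at the step where you pass from countably many parameters to all of $\Lambda$. The claim that ``the joint continuity clause in (CHC) extends common hypercyclicity to all $\lambda\in\Lambda$'' once it is known on a countable dense set is false: continuity of $\lambda\mapsto T_{n,\lambda}x$ is only at \emph{fixed} $n$, while the return time $n=n(\lambda,U)$ witnessing $T_{n,\lambda}x\in U$ depends on $\lambda$ and may diverge along any sequence $\lambda_k\to\lambda$. This is exactly the difficulty that separates common hypercyclicity for uncountable families from the countable case, and it is not overcome by condition (3) of (CHC) either. Relatedly, fixing a $1/\ell$-net $F_\ell$ in advance does not mesh with (CHC)(3): the width $\delta_k$ of the interval on which $T_{k,\lambda}S_{k,\mu}y_0\approx y_0$ is not bounded below, so a single perturbation term $z_{i,l}=S_{N_l,\mu_l}y$ controls the orbit only for $\lambda$ within $\delta_{N_l}$ of $\mu_l$, which need not cover the $1/\ell$-ball around $\mu_l$.

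The missing idea, which is the heart of the paper's proof, is that each perturbation $z_{i,j}$ must be a \emph{finite sum} $\sum_{l=0}^{L-1} S_{K,k_{l+1},\lambda_l}y$ built from a partition $a=\lambda_0<\lambda_1<\cdots\le b$ of the compact interval $K=[a,b]$ with step sizes $\lambda_l-\lambda_{l-1}=\delta_{k_l}$; the divergence $\sum_k\delta_k=\infty$ is precisely what makes such a partition exist. This sum is still small in $X$ by (CHC)(5), but now for \emph{every} $\lambda\in[a,b]$ there is some index $s$ (depending on $\lambda$) with $\lambda\in[\lambda_s,\lambda_{s+1}]$, and the iterate $T_{k_{s+1},\lambda}$ applied to this sum is close to $y$: the $s$-th term contributes $\approx y$ by (CHC)(3), and the other terms are negligible by (CHC)(1),(2). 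Thus a single perturbation handles the whole compact interval at once, and no extension argument from a countable set is needed. You should rewrite the construction so that at step $(i,j)$ the added vector $z_{i,j}$ is such a sum, together with an interval $[n^0_{i,j},n^1_{i,j}]$ of times in which the good iterate lies for every $\lambda\in K_{i+j}$; the rest of your basic-sequence machinery then goes through essentially as you describe.
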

\begin{proof}
Let $(p_j)_{j\ge 1}$ be an increasing sequence of norms inducing the topology of~$X$, let $(q_j)_{j\ge 1}$ be an increasing sequence of seminorms inducing the topology of $Y$ and let $K=[a,b]\subset \Lambda$.
We denote by $X_{K,0}$, $Y_{K,0}$ and $S_{K,n,\lambda}$ the dense subsets and maps given by (CHC) for $K$.
We first show that for any $\varepsilon>0$, any $j\ge 1$, any $N_0\ge 0$, and any $y\in Y_{K,0}$, there exist      
 $N_1\ge N_0$  and $x\in X$ with $p_j(x)<\varepsilon$  so that for any $\lambda \in [a,b]$ there exists $k\in [N_0,N_1]$ satisfying
\begin{equation}
q_j(T_{k,\lambda}x-y)<\varepsilon.
\label{goodx}
\end{equation}

Let $\varepsilon>0$, $j\ge 1$, $N_0\ge 0$, and $y\in Y_{K,0}$ be given.
We consider $C\ge N_0$ such that
\begin{enumerate}
\item for any finite set $F\subset [C,\infty[$, any $l\ge 0$, any  $\lambda\ge \mu_0\ge \dots \ge \mu_l$ in $K$,
\[q_j\Big(\sum_{k\in F\cap [0,l]}T_{l,\lambda}S_{K,l-k,\mu_k}y\Big)<\varepsilon;\]
\item for any finite set $F\subset [C,\infty[$, any $l\ge 0$, any $\lambda\le \mu_0\le \mu_1\le \cdots$ in $K$, 
\[q_j\Big(\sum_{k\in F}T_{l,\lambda}S_{K,l+k,\mu_k}y\Big)<\varepsilon;\]
\item  for any finite set $F\subset [C,\infty[$, any $\mu_0\le \mu_1\le \dots$ in $K$.   
\[p_j\Big(\sum_{k\in F}S_{K,l,\mu_k}y\Big)<\varepsilon.\]
\end{enumerate}

Let $(\delta_{l})_{l\ge 0}$ be a sequence of positive real numbers such that ${\sum_{l=0}^{\infty}\delta_l=\infty}$ and for any $\alpha,\lambda \in K$, any $l\ge 0$,
\[0\le \alpha-\lambda\le \delta_l \quad\Rightarrow\quad q_j(T_{l,\lambda} S_{l,\alpha}y-y)<\varepsilon;\]
Since ${\sum_{l=0}^{\infty}\delta_l=\infty}$, there exists an increasing sequence $(k_l)_{l\ge 1}$ such that $\max\{C,N_0\}\le k_1$, for any $l\ge 1$, $k_{l+1}-k_l\ge C$ and $\sum_{l=1}^{\infty}\delta_{k_l}=\infty$. Such a sequence exists because
$(\{NC+k:N\ge 1\})_{k=0,\dots,N-1}$ forms a partition of $[C,\infty[$ and ${\sum_{l=C}^{\infty}\delta_l=\infty}$.

We select $L\ge 1$ the smallest integer such that 
\[\sum_{l=1}^{L}\delta_{k_l}\ge b-a.\]
Let $\lambda_0:=a$ and $\lambda_l:=\lambda_{l-1}+\delta_{k_l}$ for any $1\le l\le L$.
We deduce that $a=\lambda_0<\lambda_1<\cdots<\lambda_{L-1}\le b\le\lambda_L$.
We then consider	
\[x=\sum_{l=0}^{L-1} S_{K,k_{l+1},\lambda_l}y \quad\text{and}\quad N_1=k_L\]
and we show that $x$ and $N_1$ satisfy the desired properties. We first remark that $x$ satisfies
\[p_j(x)=p_j\Big(\sum_{l=0}^{L-1} S_{K,k_{l+1},\lambda_l}y\Big)<\varepsilon.\]
On the other hand, we see that for any $\lambda\in [a,b]$, there exists $0\le s\le L-1$ such that $\lambda\in [\lambda_s,\lambda_{s+1}]$ and thus
\begin{align*}
q_j(T_{k_{s+1},\lambda}x-y)&\le q_j(T_{k_{s+1},\lambda}S_{K,k_{s+1},\lambda_s}y-y)\\
&\quad +q_j\Big(\sum_{0\le l<s}T_{k_{s+1},\lambda}S_{K,k_{l+1},\lambda_l}y\Big)
+q_j\Big(\sum_{s<l\le L-1}T_{k_{s+1},\lambda}S_{K,k_{l+1},\lambda_l}y\Big)\\
&< q_j(T_{k_{s+1},\lambda}S_{K,k_{s+1},\lambda_s}y-y)+2\varepsilon\\
&\le 3\varepsilon \quad\quad\text{because }0\le \lambda-\lambda_s\le \lambda_{s+1}-\lambda_s\le \delta_{k_{s+1}}.
\end{align*}
Since for any $0\le s\le L-1$, $k_{s+1}\in[N_0,N_1]$, we conclude that \eqref{goodx} is satisfied.

Let $M_0$ be an infinite-dimensional closed subspace such that for any $\lambda\in \Lambda$, any $x\in M_0$, 
\[T_{k,\lambda}x\xrightarrow[k\to \infty]{} 0.\]
We consider a chain $(K_n)_{n\ge 1}$ of $\Lambda$ such that each $K_n$ is a compact subinterval, and
a basic sequence $(u_n)_{n\ge 1}$ in $M_0$ such that for every $n\ge 1$, we have $p_1(u_n)=1$ and the sequence $(u_k)_{k\ge n}$ is basic in $(X,p_n)$ with basic constant less than $2$. We remark that, since each $K_n$ is compact and $T_{l,\lambda}(x)$ depends continuously on $\lambda$, for any $n,k,j\ge 1$, there exist a positive number $C_{n,k,j}$ and a positive integer $m^0(n,k,j)$ such that for any $x\in X$,
\begin{equation}
\sup_{\lambda\in K_n}p_j(T_{k,\lambda}x)\le C_{n,k,j}p_{m^0(n,k,j)}(x). \quad\text{(Banach-Steinhaus)}
\label{cont Tlambda}
\end{equation}

In particular, if $\tilde{X}$ is a dense subset in $X$, we deduce from the previous reasoning that for any $\varepsilon>0$, any $j,n\ge 1$, any $N_0\ge 1$, any $y\in Y_{K_n,0}$, there exists $x\in \tilde{X}$ and $N_1\ge N_0$ such that $p_j(x)<\varepsilon$ and such that for any $\lambda \in K_n$, there exists $k\in [N_0,N_1]$ such that
\begin{equation}
q_j(T_{k,\lambda}x-y)<\varepsilon.
\label{superx}
\end{equation}

Let $(y_k)_{k\ge 1}$ be a dense sequence in $X_0$ and $\prec$ the order on $\mathbb{N}\times\mathbb{N}$ defined by $(i,j)\prec(i',j')$ if $i+j<i'+j'$ or if $i+j=i'+j'$ and $i<i'$. We construct a family $(z_{i,j})_{i,j\ge 1}\subset X$ and two families $(n^{0}_{i,j})_{i,j\ge 1}, (n^{1}_{i,j})_{i,j\ge 1} \subset \mathbb{N}$ such that for any $i,j\ge 1$, $n^{0}_{i,j}\le n^{1}_{i,j}$ and for any $i\ge 1$, $(n^{0}_{i,j})_{j\ge 1}, (n^{1}_{i,j})_{j\ge 1}$ are increasing. If $z_{i',j'}$, $n^{0}_{i',j'}$ and $n^{1}_{i',j'}$ are already constructed for every $(i',j')\prec(i,j)$, then we choose $z_{i,j}\in X$ and $n^0_{i,j}, n^1_{i,j}$ with $n^{0}_{i,j}>\max\{n^{1}_{i',j'}:(i',j')\prec(i,j)\}$ 
such that 
\begin{itemize}
\item we have, 
\begin{gather}
\sum_{j'\le j} z_{i,j'}\in X_{K_{i+j+1},0},
\label{X0}
\end{gather}
\item for any $n\ge n^{0}_{i,j}$, any $i'\ge 1$ we have
\begin{gather}
\sup_{\lambda\in K_{i+j}}q_{i+j}\Big(\sum_{j':(i',j')\prec (i,j)}T_{n,\lambda}z_{i',j'}\Big)<\frac{1}{2^{i'+j}}, \label{3n grand}
\end{gather}
\item we have
\begin{gather}
p_{i+j}(z_{i,j})<\frac{1}{2^{i+j+2}},\label{3z petit}
\end{gather}
\item for any $(i',j')\prec(i,j)$, any $\lambda\in K_{i+j}$, any $n\in[n^{0}_{i',j'},n^{1}_{i',j'}]$, we have
\begin{equation}
q_{i+j}(T_{n,\lambda}z_{i,j})<\frac{1}{2^{i+j+j'}},  \label{3T z petit}\\
\end{equation}
\item for any $\lambda\in K_{i+j}$, there exists $n\in[n^{0}_{i,j},n^{1}_{i,j}]$, such that we have
\begin{equation}
q_{i+j}(T_{n,\lambda}z_{i,j}-y_j)<\frac{1}{2^{j}}.
\label{close y}
\end{equation}
\end{itemize}
Satisfying \eqref{3n grand} is possible by choosing $n^{0}_{i,j}$ sufficiently big because for any $i'\ge 1$, if $i'<i$,
\[\sum_{j':(i',j')\prec (i,j)}z_{i',j'}=\sum_{j'\le i+j-i'}z_{i',j'}\in X_{K_{i+j+1},0}\]
and if $i'\ge i$,
\[\sum_{j':(i',j')\prec (i,j)}z_{i',j'}=\sum_{j'\le i+j-i'-1}z_{i',j'}\in X_{K_{i+j},0}.\]
Satisfying \eqref{3z petit} and \eqref{3T z petit} is possible by choosing $z_{i,j}$ sufficiently close to $0$ thanks to \eqref{cont Tlambda}, and we can choose $z_{i,j}$ and $n^{1}_{i,j}$ such that \eqref{X0} and \eqref{close y} are satisfied thanks to \eqref{superx}.

We define, for any $i\ge 1$,
\[z_i:=u_i+\sum_{j=1}^{\infty} z_{i,j}.\]
By \eqref{3z petit}, these series are convergent and we deduce that the sequence $(z_i)_{i\ge 1}$ is a basic sequence equivalent to $(u_i)_{i\ge 1}$ in $X$. Let $M$ be the closed linear span of $(z_i)$ and $z\in M\backslash\{0\}$. We need to show that $z$ is hypercyclic for each sequence $(T_{n,\lambda})$. Since $(z_i)$ is a basic sequence, we can write $z=\sum_{i=1}^{\infty}\alpha_i z_i$ for some scalar sequence $(\alpha_i)$, and re-scaling $z$ if necessary we can further assume that $\alpha_{k}=1$ for some $k$. By the equivalence between the basic sequences $(z_i)$  and $(u_i)$, we deduce that $\sum_{i=1}^{\infty}\alpha_i u_i$ also converges and that there exists $K>0$ such that for any $i\ge 1$ we have $|\alpha_i|\le K$.

Let $l\ge 1$, $\lambda\in \Lambda$ and $r\ge l$ be given. If $\lambda\in K_r$ we deduce by \eqref{3n grand}, \eqref{3T z petit} and \eqref{close y} that there exists $n\in[n^{0}_{k,r},n^{1}_{k,r}]$ such that
\begin{align*}
q_l(T_{n,\lambda}z-y_r)&\le q_l\Big(\sum_{(i,j)\prec(k,r)} \alpha_iT_{n,\lambda}z_{i,j}\Big) + q_l\Big(\sum_{(i,j)\succ(k,r)} \alpha_iT_{n,\lambda}z_{i,j}\Big)\\
&\quad + q_l(T_{n,\lambda}z_{k,r}-y_r) + q_l\Big(T_{n,\lambda}\Big(\sum_{i\ge1} \alpha_iu_i\Big)\Big)\\
&\le\sum_{i=1}^{\infty}K\Big(q_{k+r}\Big(\sum_{j:(i,j)\prec (k,r)}T_{n,\lambda}z_{i,j}\Big)+\sum_{j:(i,j)\succ(k,r)}q_{i+j}(T_{n,\lambda} z_{i,j})\Big)\\
&\quad+ q_{k+r}(T_{n,\lambda}z_{k,r}-y_r) + q_l\Big(T_{n,\lambda}\Big(\sum_{i\ge 1} \alpha_iu_i\Big)\Big)\\
&\le \sum_{i=1}^{\infty} K \Big(\frac{1}{2^{i+r}}+\sum_{j=1}^{\infty}\frac{1}{2^{i+j+r}}\Big) + \frac{1}{2^r} +q_l\Big(T_{n,\lambda}\Big(\sum_{i\ge 1} \alpha_iu_i\Big)\Big)\\
&\le \frac{2K+1}{2^{r}}+ q_l\Big(T_{n,\lambda}\Big(\sum_{i\ge 1} \alpha_iu_i\Big)\Big).
\end{align*}
The last quantity can be made arbitrarily small as long as $r$ is sufficiently large, since  $\sum_{i\ge 1} \alpha_iu_i\in M_0$. So $z$ is hypercyclic for $(T_{n,\lambda})_{n\ge 1}$.
\end{proof}

\begin{theorem}[{\bf Criterion~${\bf (M_k)}$ for Common Hypercyclic Subspaces}]\label{Mk com}
Let $X$ be a Fr\'{e}chet space with a continuous norm, let $Y$ be a separable Fr\'{e}chet space and let  $\{ (T_{n,\lambda})_{n\ge 0} \}_{\lambda\in \Lambda}$ a family of sequences in $L(X,Y)$ satisfying $(CHC)$. Suppose there exist a chain $(\Lambda_n)$ of $\Lambda$ and a non-increasing sequence $(M_k)$ of infinite-dimensional closed subspaces of $X$ 
so that for each $n\in\mathbb{N}$, 
\[
\{ (T_{k,\lambda})_{k\ge 1}\}_{\lambda\in\Lambda_n} \ \ \mbox{is uniformly equicontinuous along $(M_k)$.}
\]
Then $\{ (T_{n,\lambda})_{n\ge 0} \}_{\lambda\in \Lambda}$ has a common hypercyclic subspace.
\end{theorem}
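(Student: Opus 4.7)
The plan is to deduce Theorem~\ref{Mk com} from Theorem~\ref{CritM0} in the same spirit in which Theorem~\ref{UMk} was deduced from Theorem~\ref{UM0}: first invoke Theorem~\ref{NiceMn} to produce an infinite-dimensional closed subspace $M_0\subseteq X$ along whose vectors $T_{k,\lambda}x\to 0$ for every $\lambda\in\Lambda$, at least along a sufficiently rich union $I=\bigcup_{s\ge 1}[k_s,k_s+\phi(k_s)]$, and then rerun the constructive proof of Theorem~\ref{CritM0} drawing all integer indices from $I$. To feed Theorem~\ref{NiceMn}, write $\Lambda=\bigcup_{n\ge 1}K_n$ as an increasing exhaustion by compact subintervals and take $\Lambda_n^0=\Lambda_n^1:=K_n$ and $\Lambda_n^2:=\Lambda_n$: condition (iii) of Theorem~\ref{NiceMn} is the hypothesis of Theorem~\ref{Mk com}, condition (ii) is item~(4) of (CHC) applied with $K=K_n$, and condition (i) follows from the continuity of $\lambda\mapsto T_{k,\lambda}x$ (built into (CHC)) together with Banach--Steinhaus, since $\{T_{k,\lambda}x:\lambda\in K_n\}$ is compact for each fixed $k$ and $x$.

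Next I would engineer $\phi:\mathbb{N}\to\mathbb{N}$ large enough so that the constructive proof of Theorem~\ref{CritM0} can be carried through using only integers drawn from $I$. Inspecting the sublemma~\eqref{goodx} of that proof, the data $(\varepsilon,j,N_0,y,K)$ produces a finite integer $N_1$, which I write as a width $N_0\mapsto\psi_{\varepsilon,j,y,K}(N_0)$. Enumerate the pairs $(i,j)\in\mathbb{N}^2$ in the order $\prec$, so that stage $m$ of the diagonal construction in Theorem~\ref{CritM0} is governed by fixed data $(\varepsilon_m,j_m,y_m,K_m)$, and set
\[
\phi(n):=\max_{m\le n}\psi_{\varepsilon_m,j_m,y_m,K_m}(n),\qquad n\in\mathbb{N}.
\]
This $\phi$ is well defined (the maximum is over finitely many finite values) and satisfies $\phi(n)\ge\psi_m(n)$ whenever $m\le n$. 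Applying Theorem~\ref{NiceMn} with this $\phi$ yields an infinite-dimensional closed subspace $M_0\subseteq X$ and a strictly increasing sequence $(k_s)$ such that $T_{k,\lambda}x\to 0$ for every $(x,\lambda)\in M_0\times\Lambda$ as $k\to\infty$ along $I=\bigcup_{s\ge 1}[k_s,k_s+\phi(k_s)]$.

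With $M_0$ in hand I would rerun the proof of Theorem~\ref{CritM0}, picking the basic sequence $(u_n)$ inside $M_0$ exactly as in that proof, but at stage $m$ choosing an integer $s_m$ large enough that $k_{s_m}\ge m$ and $k_{s_m}$ exceeds every previously used index, and then setting $n_m^0:=k_{s_m}$. Because $\phi(k_{s_m})\ge\psi_m(k_{s_m})$, the sublemma produces a vector $z_m$ of small norm together with $n_m^1\in [n_m^0,n_m^0+\phi(n_m^0)]\subseteq I$ satisfying~\eqref{close y}; thus every integer index used in the whole construction lies in $I$. In the closing estimate, hypercyclicity of a generic $0\ne z=\sum\alpha_iz_i\in\overline{\operatorname{span}}\{z_i\}$ for each $(T_{n,\lambda})_n$ reduces, exactly as in the proof of Theorem~\ref{CritM0}, to showing that $q_l(T_{n,\lambda}(\sum_i\alpha_iu_i))\to 0$ along $n\in I$, and this now holds because $\sum_i\alpha_iu_i\in M_0$.

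The main obstacle is the engineering of $\phi$: one must extract a monotone-in-$N_0$ bound for the width in the sublemma of Theorem~\ref{CritM0} and package the countable family of widths into a single function $\phi:\mathbb{N}\to\mathbb{N}$ that Theorem~\ref{NiceMn} will accept. This is made possible by two degrees of freedom: $N_0$ may be chosen arbitrarily large at each stage of the construction, and Theorem~\ref{NiceMn} imposes no growth restriction on $\phi$. Once this packaging is done the remaining estimates of the two theorems slot together cleanly, and no further machinery is needed.
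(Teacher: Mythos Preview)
Your approach is correct and shares the paper's scaffold---verifying the three hypotheses of Theorem~\ref{NiceMn} via compact exhaustions $\Lambda_n^0=\Lambda_n^1=K_n$ and the given chain $\Lambda_n^2=\Lambda_n$---but the choice of $\phi$ and the way Theorem~\ref{CritM0} is invoked differ genuinely from the paper.

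The paper does not rerun the constructive proof of Theorem~\ref{CritM0}. Instead it isolates a hereditary property of (CHC) as a separate lemma (Proposition~\ref{prophered}): there is a $\phi$ such that for \emph{every} increasing $(k_s)$ the restricted family $\{(T_{m_k,\lambda})_k\}_{\lambda\in\Lambda}$, with $(m_k)$ enumerating $I=\bigcup_s[k_s,k_s+\phi(k_s)]$, again satisfies (CHC). The $\phi$ is chosen so that each block $[k,k+\phi(k)]$ carries at least unit mass for every one of the countably many $\delta$-sequences arising in condition~(3) of Definition~\ref{D:CHC}; this guarantees $\sum_{k\in I}\delta_k=\infty$ regardless of $(k_s)$, and conditions (1), (2), (4), (5) pass to subsequences automatically. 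Once this is in hand, Theorem~\ref{NiceMn} with this $\phi$ produces $M_0$ on which $T_{m_k,\lambda}x\to 0$ for all $\lambda$, and Theorem~\ref{CritM0} is applied \emph{as a black box} to the reindexed family $\{(T_{m_k,\lambda})_k\}_{\lambda\in\Lambda}$.

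Your $\phi$ instead encodes the finite widths $N_1-N_0$ produced by the sublemma~\eqref{goodx}, stage by stage, and you then place each starting index $n^0_{i,j}$ at the left endpoint of a block of $I$. This works, but it requires unfolding the sublemma to express $N_1$ as an explicit function of $(N_0,\varepsilon,j,y,K)$ and checking that the countably many data tuples used in the diagonal construction are fixed \emph{before} $(k_s)$ is produced (they are, since the dense sequence $(y_k)$ and the chain $(K_n)$ are chosen at the outset). The paper's route buys modularity: Proposition~\ref{prophered} is a clean standalone statement about (CHC), and Theorem~\ref{CritM0} is reused without modification. Your route buys directness and avoids reindexing, at the cost of tighter coupling between the two proofs.
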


For the proof of Theorem~\ref{UMk} we used a map $\phi$ to guarantee that the subsequence 
$\{ n_k:\ k\in I=\bigcup_{s\ge 1} [k_s,k_{s}+\phi(k_s)]$
given by Corollary~\ref{NiceMnone} kept the positiveness of the upper density of $(n_k)$. In the proof of Proposition~\ref{prophered} below we use the fact that given a divergent series $\sum_{k\ge 1} \delta_k $ of positive real numbers there exists a map  $\phi:\N\to\N$ so that for each increasing sequence of integers $(k_s)_{s\ge 1}$ in $\N$ the set $I=\bigcup_{s\ge 1} [k_s,k_{s}+\phi(k_s)]$ satisfies
\[\sum_{j\in I}\delta_{j}=\infty.\]
We prove Theorem~\ref{Mk com} after the proof of Proposition~\ref{prophered}.

\begin{prop}\label{prophered}
Let $X$ be a Fr\'{e}chet space supporting a continuous norm, let $Y$ be a separable Fr\'{e}chet space and let $\{ (T_{n,\lambda})_{n\ge 0 } \}_{\lambda\in \Lambda}$ be a family of sequences in $L(X,Y)$ satisfying the $(CHC)$. Then there exists a map $\phi:\N\to \N$ such that for any increasing sequence $(k_s)$ in $\mathbb{N}$, if $(m_k)=\bigcup_{s\ge 1}[k_s,k_s+\phi(k_s)]$, then $\{ (T_{m_k,\lambda})_{k\ge 1} \}_{\lambda\in \Lambda}$ satisfies $(CHC)$.
\end{prop}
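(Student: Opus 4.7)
The plan is to verify the five conditions of Definition~\ref{D:CHC} for the subfamily $\{(T_{m_k,\lambda})_{k\ge 1}\}_{\lambda\in\Lambda}$, keeping the same dense subsets $X_0,Y_0$ as for the original family and setting $\tilde S_{k,\lambda}:=S_{m_k,\lambda}$. Conditions~(1), (2), (4), (5) will be inherited by restriction: given any monotone sequence $(\mu_k)$ in $K$ relevant to the new statement, I extend it to a monotone sequence $(\nu_j)_{j\ge 0}$ by setting $\nu_{m_k}:=\mu_k$ and interpolating monotonically on the complementary indices; every partial sum of the new series then becomes a sub-sum of a partial sum of the corresponding original series, and unconditional / uniform convergence are preserved under passage to subseries. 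Condition~(4) is immediate, as $(m_k)$ is a subsequence of $\N$.

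The substantive requirement is (3), which for each compact $K\subseteq\Lambda$, each $y_0\in Y_0$, each $\varepsilon>0$, and each continuous seminorm $q$ on $Y$, asks for $(\tilde\delta_k)$ with $\sum_k\tilde\delta_k=\infty$ and $0\le\mu-\lambda<\tilde\delta_k\Rightarrow q(T_{m_k,\lambda}S_{m_k,\mu}y_0-y_0)<\varepsilon$. The natural choice $\tilde\delta_k:=\delta_{m_k}^{(K,y_0,\varepsilon,q)}$---where $(\delta^{(K,y_0,\varepsilon,q)}_k)$ is supplied by the original (CHC)(3)---automatically preserves the implication, so the only real issue is divergence of the thinned series $\sum_{j\in I}\delta^{(K,y_0,\varepsilon,q)}_j$ for every admissible quadruple. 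This is the main obstacle: a single map $\phi$, depending only on the family $\{(T_{n,\lambda})\}$, must force this divergence simultaneously for every such quadruple and every increasing sequence $(k_s)$.

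To bring the problem into countable form, I would fix a countable exhaustion $(K_n)$ of $\Lambda$ by compact intervals and a countable family $(q_l)$ of seminorms generating the topology of $Y$, replace $Y_0$ by a countable dense subset (which still satisfies (CHC) after restricting $S_{n,\lambda}$), and reduce to $\varepsilon=1/m$, $m\in\N$. Enumerating the resulting countable collection of divergent series as $(\delta^{(i)}_k)_{i\in\N}$, I set
\[
\phi(k):=\min\Bigl\{N\ge 0:\ \sum_{j=k}^{k+N}\delta^{(i)}_j\ge 1\text{ for every }i\le k\Bigr\},
\]
which is well-defined because each $\sum_j\delta^{(i)}_j=\infty$. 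Given any strictly increasing $(k_s)$ in $\N$, I recursively extract $(k_{s_r})$ with $k_{s_{r+1}}>k_{s_r}+\phi(k_{s_r})$, which is possible since $k_s\to\infty$; the intervals $J_r:=[k_{s_r},k_{s_r}+\phi(k_{s_r})]\subseteq I$ are then pairwise disjoint, and for each fixed $i$ we have $\sum_{j\in J_r}\delta^{(i)}_j\ge 1$ as soon as $k_{s_r}\ge i$, so
\[
\sum_{j\in I}\delta^{(i)}_j\ge\sum_{r:\,k_{s_r}\ge i}\sum_{j\in J_r}\delta^{(i)}_j=\infty,
\]
which closes (3) and finishes the verification of (CHC) for the subfamily.
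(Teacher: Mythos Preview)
Your approach is essentially the same as the paper's: reduce condition~(3) to countably many divergent series via a compact exhaustion $(K_n)$, countable seminorms, countable $Y_0$, and $\varepsilon=1/N$, then choose $\phi(k)$ so that each relevant tail sum over $[k,k+\phi(k)]$ exceeds~$1$. Your extraction of a disjoint subfamily $(J_r)$ to force $\sum_{j\in I}\delta_j^{(i)}=\infty$ is in fact more carefully justified than the paper's one-line assertion of the same fact.

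One technical slip: the rule ``$\nu_{m_k}:=\mu_k$'' works for condition~(5), but not for (1) and~(2). In condition~(1) the new term is $T_{m_l,\lambda}S_{m_{l-k},\mu_k}y_0$, and to view it as a term of the original series $\sum_j T_{m_l,\lambda}S_{m_l-j,\nu_j}y_0$ you need $j=m_l-m_{l-k}$ and $\nu_{m_l-m_{l-k}}=\mu_k$; the paper carries out exactly this reindexing (and the analogous $j=m_{l+k}-m_l$ for condition~(2)), checking that a tail set $F\subset[C,\infty)$ in the new index maps into a tail set $F'\subset[C,\infty)$ in the old index. Your global idea---each partial sum of the new series embeds as a sub-sum of an instance of the original series with a suitably extended monotone $(\nu_j)$---is correct, but the index at which $\mu_k$ lands depends on the outer index $l$, not just on $m_k$. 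Once this is fixed, the argument goes through.
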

\begin{proof}
Let $(q_j)_{j\ge 1}$ be an increasing sequence of seminorms inducing the topology of $Y$. Let $(K_n)_{n\ge 1}$ be a chain of $\Lambda$ such that each $K_n$ is a compact subinterval.  We denote by $X^{(n)}_{0}$, $Y^{(n)}_{0}$ and $S^{(n)}_{k,\lambda}$ the sets and maps given by $(CHC)$ for the compact set $K_n$. We remark that without loss of generality we can assume that for each $n\ge1$, $Y^{(n)}_{0}$ is countable. Moreover, for any $N\ge 1$, $j\ge 1$, $n\ge 1$, and $y_0\in Y^{(n)}_0$ we denote by $(\delta^{(n)}_{y_0,N,j,k})_k$ a sequence of positive real numbers such that $\sum_{k=0}^\infty \delta^{(n)}_{y_0,N,j,k} =\infty$ and for each $k\ge 0$ and $\lambda, \mu\in K_N$ we have
\[
0\le \mu-\lambda < \delta_{y_0,N,j,k}  \ \   \Rightarrow \ \ \  q_j( T_{k, \lambda} S^{(n)}_{k,\mu} y_0 - y_0 ) < \frac{1}{N}.
\]
We then choose $\phi$ such that for any $N\ge 1$, $j\ge 1$, $n\ge 1$, any $y_0\in Y^{(n)}_0$, if  $I=\bigcup_{s\ge 1}[k_s,k_s+\phi(k_s)]$, then 
\[\sum_{k\in I} \delta^{(n)}_{y_0,N,j,k} =\infty.\]
Such a map exists because, if $Y^{(n)}_0=(y_{n,l})_{l\ge 1}$, then it suffices to let $\phi(k)$ such that for any $n,j,l,N\le k$,
\[\sum_{i=k}^{k+\phi(k)} \delta^{(n)}_{y_{n,l},N,j,i}\ge 1.\]

Let $(k_s)$ be an increasing sequence and $(m_k)=\bigcup_{s\ge 1}[k_s,k_s+\phi(k_s)]$. We show that $\{(T_{m_k,\lambda})_{k\ge 1}\}_{\lambda\in \Lambda}$ satisfies $(CHC)$. We first remark that for any compact subset $K$ in $\Lambda$, there exists $n\ge 1$ such that $K_n\supset K$.
It thus suffices to prove that each condition of $(CHC)$ is satisfied for the compact subintervals $K_n$.
Let $x_0\in X^{(n)}_0$, $y_0\in Y_0^{(n)}$ and $q$ a continuous seminorm on $Y$.
\begin{enumerate}
\item[(1)]\   Assume that for any finite set $F\subset [C,\infty[$, any $l\ge 0$, any  $\lambda\ge \mu_0\ge \dots \ge \mu_l$ in $K_n$,
\[q\Big(\sum_{k\in F\cap [0,l]}T_{l,\lambda}S^{(n)}_{l-k,\mu_k}y_0\Big)<\varepsilon.\]
We remark that for any finite set $F\subset [C,\infty[$, any $l\ge C$, any  $\lambda\ge \mu_0\ge \dots \ge \mu_l$ in $K_n$, we have
\begin{align*}
q\Big(\sum_{k\in F\cap [0,l-1]}T_{m_l,\lambda}S^{(n)}_{m_{l-k},\mu_k}y_0\Big)
& = q\Big(\sum_{k'\in F'\cap [0,m_l]} T_{m_l,\lambda}S^{(n)}_{m_l-k',\nu_{k'}}y_0\Big),
\end{align*}
where $F'=\{k'\ge 0: k'= m_l-m_{l-k}, k\in F\cap [0,l-1]\}$ and for any $k'\in F'$, if $k'=m_l-m_{l-k}$, then $\nu_{k'}=\mu_{k}$. In particular, we have $F'\subset[m_l-m_{l-C},\infty[\subset [C,\infty[$ and thus we deduce that
\[
q\Big(\sum_{k\in F\cap [0,l-1]}T_{m_l,\lambda}S^{(n)}_{m_{l-k},\mu_k}y_0\Big)=
q\Big(\sum_{k'\in F'\cap [0,m_l]} T_{m_l,\lambda}S^{(n)}_{m_l-k',\nu_{k'}}y_0\Big)<\varepsilon.\]

\item[(2)]\ Assume that for any finite set $F\subset [C,\infty[$, any $l\ge 0$, any $\lambda\le \mu_0\le \mu_1\le \cdots$ in $K_n$, 
\[q\Big(\sum_{k\in F}T_{l,\lambda}S^{(n)}_{l+k,\mu_k}y_0\Big)<\varepsilon.\] 
We remark that for any finite set $F\subset [C,\infty[$, any $l\ge 1$, and any $\lambda\le \mu_0\le \mu_1\le \cdots$ in $K_n$, we have
\begin{align*}
q\Big(\sum_{k\in F}T_{m_l,\lambda}S^{(n)}_{m_{l+k},\mu_k}y_0\Big)
& = q\Big(\sum_{k'\in F'} T_{m_l,\lambda}S^{(n)}_{m_l+k',\nu_{k'}}y_0\Big),
\end{align*}
where $F'=\{k'\ge 0: k'= m_{l+k}-m_{l}, k\in F\}$ and for any $k'\in F'$, if $k'=m_{l+k}-m_{l}$, then $\nu_{k'}=\mu_{k}$. In particular, we have $F'\subset[m_{l+C}-m_{l},\infty[\subset [C,\infty[$ and thus we deduce that
\[
q\Big(\sum_{k\in F}T_{m_l,\lambda}S^{(n)}_{m_{l+k},\mu_k}y_0\Big)=
q\Big(\sum_{k'\in F'} T_{m_l,\lambda}S^{(n)}_{m_l+k',\nu_{k'}}y_0\Big)<\varepsilon.\]

\item[(3)]\  Let $\varepsilon>0$, $j\ge1$ and  $n\ge 1$. We consider $N\ge n$ such that $\frac{1}{N}<\varepsilon$. We know by choice of $\phi$ that $\sum_{k=1}^\infty \delta_{y_0,N,j,m_k} =\infty$, and we have, by definition of $\delta_{y_0,N,j,m_k}$, that for each $k\ge 1$ and $\lambda, \mu\in K_N$,
\[0\le \mu-\lambda < \delta_{y_0,N,j,m_k}  \ \   \Rightarrow \ \ \  q_j(T_{m_k, \lambda} S^{(n)}_{m_k,\mu} y_0 - y_0) < \epsilon.
\]
Since $K_N\supset K_n$, we have the desired result.
\end{enumerate}
 Conditions $(4)$ and $(5)$ are clear.
\end{proof}

\begin{proof}[Proof of Theorem~\ref{Mk com}]\
Let $(K_n)$ be a chain of $\Lambda$ such that each $K_n$ is a compact subinterval. 
We remark that the assumptions of Theorem~\ref{NiceMn} are satisfied if we consider $(\Lambda^0_n)=(\Lambda^1_n)=(K_n)$ and $(\Lambda^2_n)=(\Lambda_n)$. Indeed, we deduce that $(\Lambda^0_n)$ satisfies the required properties by using the Banach-Steinhaus theorem and the fact that $T_{n,\lambda}$ depends continuously on $\lambda$. 

On the other hand, as $\{ (T_{n,\lambda})_{n\ge 0} \}_{\lambda\in \Lambda}$ satisfies $(CHC)$, we deduce from Condition $(4)$ in Definition~\ref{D:CHC} that for any subinterval $[a,b]\subset \Lambda$, there exists a dense subset $X_{0}$ such that for any $x\in X_0$, $T_{k,\lambda}x\xrightarrow[k\to \infty]{}0$ uniformly on $\lambda\in [a,b]$. The chain $(\Lambda^1_n)$ thus satisfies the required properties.

We conclude by Proposition~\ref{prophered} and Theorem~\ref{NiceMn} that there exist an increasing sequence of positive integers $(m_k)$ and an infinite-dimensional closed subspace $M_0$ of $X$ 
 such that $(T_{m_k,\lambda})_{k\ge 1}$ satisfies $(CHC)$
and such that for each $x\in M_0$ and  $\lambda\in \Lambda$,
\[T_{m_k, \lambda}x\xrightarrow[k\to \infty]{} 0.\]
We obtain the desired result by applying Theorem~\ref{CritM0}.
\end{proof}

\subsection{Applications of Criterion $(M_k)$ for common hypercyclic subspaces}

In general, Criterion $(M_k)$ is much easier to use than Criterion~$(M_0)$ because in a lot of cases, when Criterion~$(M_k)$ is satisfied,  it is satisfied along the whole sequence $(k)$. We illustrate the use of Criterion $(M_k)$ for common hypercyclic subspaces on families of weighted shifts on K\"{o}the sequence spaces.

\begin{definition}
Let $A=(a_{j,k})_{j\ge 1,k\ge 0}$ be a matrix such that for any $j\ge 1$ and  $k\ge 0$, we have $a_{j,k}>0$ and $a_{j,k}\le a_{j+1,k}$.
The (real or complex) \emph{K\"{o}the sequence space} $\lambda^p(A)$  is defined as
\begin{align*}
\lambda^p(A)&:=\Big\{(x_k)_{k\ge 0}\in \omega : p_j((x_k)_k)=\Big(\sum_{k=0}^{\infty}|x_ka_{j,k}|^p\Big)^{\frac 1 p}<\infty, \ j\ge 1\Big\},
\end{align*}
endowed with the sequence of norms $(p_j)$.
\end{definition}

Let $\Lambda$ be an open interval of $\R$ and $w_{\lambda}$ be a sequence of non-zero scalars.
The weighted shift $B_{w_{\lambda}}$ is defined as $B_{w_{\lambda}}e_n=w_{\lambda,n}e_{n-1}$, where $e_{-1}=0$ and $(e_n)_{n\ge 0}$ is the canonical basis. We provide in Proposition~\ref{Bwlambda} a sufficient condition for a family of shifts
$\{ B_{w,\lambda} \}_{\lambda\in \Lambda}$  satisfying the $(CHC)$ on $\lambda^p(A)$ to support  a common hypercyclic subspace. We first note the following.
\begin{remark}\label{remequi}
Let $(p_j)_{j\ge 1}$ and $(q_j)_{j\ge 1}$ be increasing sequences of seminorms inducing the topologies of $X$, and $Y$, respectively.
If $T_{n,\lambda}$ depends continuously on $\lambda$ and $K$ is a compact subset of $\Lambda$, then $\{(T_{n,\lambda})_{n\ge 1}\}_{\lambda\in K}$ is uniformly equicontinuous along $(M_k)$ if and only if for any $j\ge 1$, there exists $C_j>0$ and $k(j),m(j)\ge 1$ such that for any $k\ge k(j)$, any $\lambda\in K$ and any $x\in M_k$,
\[q_j(T_{k,\lambda}x)\le C_j p_{m(j)}(x).\]
\end{remark}

\begin{prop} \label{Bwlambda}
Let $\lambda^p(A)$ be a K\"{o}the sequence space, $\Lambda$ an open interval of $\R$ and let $\{ B_{w_\lambda}\}_{\lambda\in \Lambda}$ be a family of weighted shifts on $\lambda^p(A)$ satisfying $(CHC)$. If there exist a chain of compact sets $(K_n)_{n\ge 1}$ of $\Lambda$, and for each $n\ge 1$ a sequence $(C_{n,j})_{j}$ of positive scalars  and a sequence $(m(n,j))_j$ of integers such that for any $l\ge 1$ and any $N\ge 0$,
\[\inf_{k\ge N}\max_{1\le n,j,m\le l}\sup_{\lambda\in K_n}\frac{p_j(B^m_{w_{\lambda}}e_k)}{C_{n,j}p_{m(n,j)}(e_k)}\le 1,\]
then $\{ B_{w_\lambda}\}_{\lambda\in \Lambda}$ has a common hypercyclic subspace.
\end{prop}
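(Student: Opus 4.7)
The plan is to verify the hypotheses of Criterion~$(M_k)$ for common hypercyclic subspaces (Theorem~\ref{Mk com}) applied to the sequences $(T_{k,\lambda})_{k\ge 1}=(B^k_{w_\lambda})_{k\ge 1}$. Since $(CHC)$ is given, the only task is to produce a non-increasing sequence $(M_k)$ of closed infinite-dimensional subspaces of $\lambda^p(A)$ along which, for each $n\ge 1$, the family $\{(B^k_{w_\lambda})_{k\ge 1}\}_{\lambda\in K_n}$ is uniformly equicontinuous. The natural candidate for $M_k$ is the closed span of a cleverly chosen subsequence of the canonical basis.

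First I would use the infimum hypothesis to extract, inductively, a strictly increasing sequence of integers $(k_l)_{l\ge 1}$ such that for each $l\ge 1$,
\[
\sup_{\lambda\in K_n} p_j(B^m_{w_\lambda} e_{k_l}) \le C_{n,j}\, p_{m(n,j)}(e_{k_l}) \qquad\text{for all } 1\le n,j,m\le l.
\]
I would then set $M_k:=\overline{\text{span}}\{e_{k_l}:l\ge k\}$ for $k\ge 1$; these subspaces are closed, non-increasing in $k$ and infinite-dimensional.

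To check the uniform equicontinuity, I fix $n,j\ge 1$ and consider $k\ge \max(n,j)$, $\lambda\in K_n$ and $x=\sum_{l\ge k} a_l e_{k_l}\in M_k$. Because $B^k_{w_\lambda} e_{k_l}$ is a scalar multiple of $e_{k_l-k}$ and because the indices $\{k_l-k\}_{l\ge k}$ are pairwise distinct (as $(k_l)$ is strictly increasing), the very definition of the Köthe $p$-norm yields the orthogonality-type identity
\[
p_j(B^k_{w_\lambda} x)^p = \sum_{l\ge k} |a_l|^p\, p_j(B^k_{w_\lambda} e_{k_l})^p.
\]
Applying the basis-level inequality term by term (which is legitimate since for every $l\ge k$ one has $n,j,k\le l$) gives
\[
p_j(B^k_{w_\lambda} x) \le C_{n,j}\, p_{m(n,j)}(x).
\]
By Remark~\ref{remequi} this is exactly the statement that $\{(B^k_{w_\lambda})_{k\ge 1}\}_{\lambda\in K_n}$ is uniformly equicontinuous along $(M_k)$, so Theorem~\ref{Mk com} produces a common hypercyclic subspace.

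The main obstacle is the combinatorial diagonal selection of the subsequence $(k_l)$: the infimum assumption couples the parameters $n$, $j$, $m$ and $\lambda$ simultaneously, and one must choose indices that serve every finite slice of these parameters at once. Once the subsequence is fixed, the argument becomes structural -- the disjoint-support decomposition of the Köthe norm promotes the basis-level estimate to the whole subspace $M_k$, and the uniform equicontinuity then follows automatically.
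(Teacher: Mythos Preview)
Your proof is correct and follows essentially the same route as the paper's own argument. The only minor technical slip is that the infimum in the hypothesis need not be attained, so one cannot in general choose $k_l$ with ratio $\le 1$ but only $\le 1+\varepsilon$; the paper absorbs this by selecting $e_{n_l}$ with ratio $\le 2$ and concluding $p_j(B^k_{w_\lambda}x)\le 2C_{n,j}\,p_{m(n,j)}(x)$ instead.
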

\begin{proof}
Let $(K_n)$, $(C_{n,j})$ and $(m(n,j))$ be sequences satisfying the above assumptions. For any $l\ge 1$, we consider $e_{n_l}$ such that $n_l>n_{l-1}$ and for any $n,j,m\le l$,
\[\sup_{\lambda\in K_n}\frac{p_j(B^m_{w_{\lambda}}e_{n_l})}{p_{m(n,j)}(e_ {n_l})}\le 2C_{n,j},\]

Let $M_k=\overline{\text{span}}\{e_{n_l}:l\ge k\}$.
We deduce that for any $n,j\ge 1$, any $\lambda\in K_n$, any $k\ge \max\{n,j\}$ and any $x\in M_k$ we have
\begin{align*}
p_j(B^{k}_{w_{\lambda}}x)^p&=
p_j\Big(B^{k}_{w_{\lambda}}\Big(\sum_{l=k}^{\infty}x_{n_l}e_{n_l}\Big)\Big)^p\\
&=\sum_{l=k}^{\infty}|x_{n_l}|^p p_j(B^{k}_{w_{\lambda}}e_{n_l})^p\\
&\le (2C_{n,j})^p \sum_{l=k}^{\infty}|x_{n_l}|^p p_{m(n,j)}(e_{n_l})^p\\
&\le (2C_{n,j})^p p_{m(n,j)}(x)^p.
\end{align*}
Since each $K_n$ is compact and $B_{w_{\lambda}}$ depends continuously on $\lambda$, we conclude by Remark~\ref{remequi} that each $\{ B_{w_{\lambda}}\}_{\lambda\in K_r}$ is uniformly equicontinuous along $(M_k)$. The conclusion now follows by Theorem~\ref{Mk com}.
\end{proof}

\begin{cor}\label{easycrit}
Let $\lambda^p(A)$ be a K\"{o}the sequence space. Let $\Lambda$ be an open interval of $\R$ and let $\{ B_{w_\lambda} \}_{\lambda\in \Lambda}$ be a family of weighted shifts on $\lambda^p(A)$ satisfying $(CHC)$. Suppose that for  each compact subset $K$ of $\Lambda$ there exist a sequence $(C_{K,j})$ of positive scalars and a sequence $(m(K,j))$ of positive integers such that for any $j\ge 1$ and  $n\ge 1$ we have
\[\limsup_{k\to \infty}\sup_{\lambda\in K}\frac{p_j(B^n_{w_{\lambda}}e_k)}{C_{K,j}\ p_{m(K,j)}(e_k)}\le 1.\]
Then $\{ B_{w_\lambda} \}_{\lambda\in \Lambda}$ has a common hypercyclic subspace.
\end{cor}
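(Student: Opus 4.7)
The plan is to reduce Corollary~\ref{easycrit} to Proposition~\ref{Bwlambda} by a direct verification. The corollary's hypothesis is essentially a ``uniform in $k$'' reformulation of the proposition's infimum condition, so the main work is producing a chain $(K_n)$ of compact subsets exhausting $\Lambda$ together with the scalar and integer sequences that Proposition~\ref{Bwlambda} needs.

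First I would choose a chain $(K_n)_{n\ge 1}$ of compact subintervals of $\Lambda$ with $\bigcup_n K_n = \Lambda$ (e.g., $K_n = [a_n, b_n]$ with $a_n \downarrow \inf \Lambda$ and $b_n \uparrow \sup \Lambda$). Applying the hypothesis of the corollary to each compact $K_n$ furnishes sequences $(C_{n,j})_j := (C_{K_n, j})_j$ of positive scalars and $(m(n,j))_j := (m(K_n, j))_j$ of positive integers such that for every $n, j, m \ge 1$,
\[
\limsup_{k\to\infty} \sup_{\lambda\in K_n} \frac{p_j(B^m_{w_\lambda} e_k)}{C_{n,j}\, p_{m(n,j)}(e_k)} \le 1.
\]

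Next I would verify that this data satisfies the hypothesis of Proposition~\ref{Bwlambda}. Fix $l \ge 1$. Since $\{(n,j,m) : 1 \le n,j,m \le l\}$ is a finite set and the $\limsup$ of a pointwise maximum of finitely many sequences equals the maximum of their $\limsup$s, we obtain
\[
\limsup_{k\to\infty} \max_{1 \le n,j,m \le l} \sup_{\lambda\in K_n} \frac{p_j(B^m_{w_\lambda} e_k)}{C_{n,j}\, p_{m(n,j)}(e_k)} \le 1.
\]
In particular, for any $N \ge 0$, the infimum of the above expression over $k \ge N$ is bounded above by its $\liminf$, which is bounded by its $\limsup$, hence by $1$. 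This is exactly the hypothesis of Proposition~\ref{Bwlambda}, so applying that proposition (the family $\{B_{w_\lambda}\}_{\lambda\in\Lambda}$ already satisfies (CHC) by assumption) yields a common hypercyclic subspace, as desired.

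There is no serious obstacle here; the entire argument is a one-step reduction, and the only substantive observation is the elementary fact that $\limsup$ commutes with finite maxima, which converts the ``for each individual triple $(n,j,m)$'' condition of the corollary into the ``maximum over $1 \le n,j,m \le l$'' condition of the proposition.
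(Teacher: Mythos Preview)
Your reduction to Proposition~\ref{Bwlambda} is correct and is exactly what the paper intends; the authors state the corollary without proof, implicitly leaving this straightforward verification to the reader. The key observation --- that $\limsup_k \max_i a_k^{(i)} = \max_i \limsup_k a_k^{(i)}$ for finitely many sequences, together with $\inf_{k\ge N}\le \liminf_k\le \limsup_k$ --- is precisely what bridges the per-triple $\limsup$ hypothesis of the corollary and the $\inf$--$\max$ condition of the proposition.
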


Corollary~\ref{easycrit} gives an answer to a question of Costakis and Sambarino~{\cite[Section 8]{Costakis2}}:

\begin{cor}\label{Ex:Costa}
For each $\lambda >1$, let $B_{w_\lambda}$ be the backward shift on $\ell^p$ $(1\le p<\infty)$ with weight sequence $w_{\lambda}=(1+\frac{\lambda}{k})_{k\ge 1}$.
Then $\{ B_{w_\lambda} \}_{\lambda >1}$ has a common hypercyclic subspace.
\end{cor}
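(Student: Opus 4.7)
The plan is to apply Corollary~\ref{easycrit} with $\Lambda = (1,\infty)$ to the family $\{B_{w_\lambda}\}_{\lambda > 1}$ of weighted backward shifts on $\ell^p$. Two things must be verified: that the family satisfies $(CHC)$, and that the asymptotic growth condition of Corollary~\ref{easycrit} holds on each compact $K \subset (1,\infty)$.

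For $(CHC)$, I would simply invoke the work of Costakis and Sambarino~\cite{Costakis2}, who showed that $\{B_{w_\lambda}\}_{\lambda > 1}$ satisfies the hypotheses of their original Common Hypercyclicity Criterion in proving that this family has a residual set of common hypercyclic vectors. By Remark~\ref{R:CHC}(a), the corresponding family of sequences of iterates $\{(B_{w_\lambda}^{n+1})_{n\ge 0}\}_{\lambda > 1}$ then satisfies $(CHC)$ in the sense of Definition~\ref{D:CHC}, which is precisely what Corollary~\ref{easycrit} needs.

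For the growth estimate: since $\ell^p$ is a Banach space equipped with a single norm $\|\cdot\|$, every seminorm $p_j$ coincides with $\|\cdot\|$, so I may take $C_{K,j} = 1$ and $m(K,j) = 1$ for all $j$. Given a compact $K \subset (1,\infty)$, fix $b$ with $K \subset (1,b]$. For $k \ge n$ a direct computation yields
\[
B_{w_\lambda}^n e_k \;=\; \Big(\prod_{i=0}^{n-1}\Big(1 + \frac{\lambda}{k-i}\Big)\Big)\, e_{k-n},
\]
and since $\|e_m\| = 1$ for every $m$,
\[
\sup_{\lambda \in K}\frac{\|B_{w_\lambda}^n e_k\|}{\|e_k\|} \;\le\; \prod_{i=0}^{n-1}\Big(1 + \frac{b}{k-i}\Big) \;\xrightarrow[k \to \infty]{}\; 1,
\]
for each fixed $n$. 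Hence the required limsup is at most $1$, and Corollary~\ref{easycrit} produces the desired common hypercyclic subspace.

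There is no serious obstacle here: the substantive content is already packaged into Corollary~\ref{easycrit}. The $(CHC)$ hypothesis is supplied by the Costakis--Sambarino analysis, and the only new input is the elementary pointwise comparison $w_{\lambda,k} - 1 = \lambda/k \to 0$ uniformly on compacta, which makes the ratio $\|B_{w_\lambda}^n e_k\|/\|e_k\|$ tend to $1$ as $k \to \infty$. This is precisely the advantage of the $(M_k)$-style criterion: unlike Criterion~$M_0$, one no longer needs to exhibit a closed infinite-dimensional subspace on which the iterates of every $B_{w_\lambda}$ converge to zero, but only an asymptotic estimate along the canonical basis.
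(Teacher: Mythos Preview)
Your proof is correct and follows essentially the same route as the paper's own argument: both invoke Costakis--Sambarino~\cite{Costakis2} to obtain $(CHC)$ and then apply Corollary~\ref{easycrit} via the elementary estimate $\sup_{\lambda\in K}\|B_{w_\lambda}^n e_k\|\le \prod_{\nu=0}^{n-1}(1+\tfrac{b}{k-\nu})\to 1$. Your explicit remark that in the Banach-space case one may take $C_{K,j}=1$ and $m(K,j)=1$ is a helpful clarification but not a departure from the paper's approach.
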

\begin{proof}
Costakis and Sambarino \cite{Costakis2} have shown that the family
$\{ B_{w_{\lambda}}\}_{\lambda>1}$ satisfies (CHC).
But for each  $K=[a,b]\subset (0,\infty )$ and  $n\ge 1$ we have
\begin{align*}
\limsup_{k\to\infty}\sup_{\lambda\in K}\|B^n_{w_{\lambda}}e_k\|& = \limsup_{k\to\infty}\sup_{\lambda\in K} 
\Big(\prod_{\nu=0}^{n-1} |w_{\lambda,k-\nu}|\Big)\\
&= \limsup_{k\to\infty}\sup_{\lambda\in K}\Big(\prod_{\nu=0}^{n-1} \big(1+\frac{\lambda}{k-\nu}\big)\Big)\\
&\le \limsup_{k\to\infty}\ \big(1+\frac{b}{k-n+1}\big)^n=1.
\end{align*}
That is, the assumptions of Corollary~\ref{easycrit} are satisfied.
\end{proof}

Bayart and Ruzsa \cite{Bayart3} showed that a shift $B_w$ with weight sequence $w=(w_n)$ is frequently hypercyclic  on  $\ell^p$ if and only it is $\mathcal{U}$-frequently hypercyclic and if and only if $(\frac{1}{\prod_{1\le j\le n} w_j} )_{n\ge 1} \in \ell^p$.  
So each shift  $B_{w_{\lambda}}$ from Corollary~\ref{Ex:Costa} is frequently hypercyclic on $\ell^p$ for 	any $1\le p<\infty$.  Moreover, by  Corollary~\ref{caraclp}, each $B_{w_\lambda}$ has a $\mathcal{U}$-frequently hypercyclic subspace on $\ell^p$. This motivates the following.
\begin{problem}
For each $\lambda \in \C$, let $\{ B_{w_{\lambda}}\}_{\lambda>1}$ be the shift operator on $\ell^p$ with weight sequence $w_\lambda=(1+\frac{\lambda}{n})$. Does
$
\{ B_{w_\lambda} \}_{\lambda >1}
$
have a common $\mathcal{U}$-frequently hypercyclic subspace?
\end{problem}

We note that two operators having $\mathcal{U}$-frequently hypercyclic subspaces may fail to have a common hypercyclic subspace.

\begin{example} \label{E:T1T2}
Let $B_w$ be the backward shift operator on $\ell^2$ of weight sequence $w=(\frac{n+1}{n})$.  Then each of the operators $T_1=B_w\oplus 2B$ and $T_2=2B\oplus B_w$ on $X=\ell^2\oplus \ell^2$ has a $\mathcal{U}$-fhc subspace, but $\{ T_1, T_2 \}$ has no common hypercyclic subspace.  The latter follows from an argument used in \cite[Example 2.1]{Aron}:  For $j=1,2$, let $P_j:\ell^2\oplus\ell^2\to\ell^2$, $P_j(x_1, x_2)=x_j$ be the $j$-th orthogonal projection. If  $M\subset \ell^2\oplus\ell^2$ is a hypercyclic subspace for both $T_1$ and $T_2$, then $P_1(M)\cup P_2(M)$ consists (but zero) of common hypercyclic vectors for $B_w$ and $2B$. But since $M$ is closed and infinite-dimensional and $X$ is a Hilbert space, one of $P_1(M)$, $P_2(M)$ must {\em contain} a closed and infinite-dimensional subspace, contradicting the fact that $2B$ has no hypercyclic subspaces.  It remains to see that each of $T_1$ and $T_2$ has a $\mathcal{U}$-fhc subspace. Each of $T_1$, $T_2$ satisfies the FHC on $\ell^2\oplus\ell^2$, since each of their summands $B_w$ and $2B$ satisfies the FHC on $\ell^2$. Also, $B_w$ has a hypercyclic subspace on $\ell^2$, and 
there exists a non-increasing sequence $(M_k)$ of closed and infinite-dimensional subspaces  of $\ell^2$
for which  $\mbox{sup}_{k\ge 1} \| {B_w^k}_{|_{M_k}} \| <\infty$.  Thus $(T_1^k)$ and $(T_2^k)$ are equicontinuous along $(M_k\oplus\{ 0\})$ and $(\{ 0\}\oplus M_k)$, respectively, and the conclusion follows from Theorem~\ref{UMkT}
\end{example}



In 2010, Shkarin~\cite{ShkarinD} showed that the derivative operator $D$ on $H(\C)$ has a hypercyclic subspace.
By considering $H(\C)$ as the K\"{o}the sequence space $\Lambda^1(A)$ for $a_{j,k}=j^k$, we can now prove the existence of a common hypercyclic subspace for the family of operators $\{ \lambda D\}_{\lambda\in \C\backslash\{0\}}$ on $H(\C)$. 

\begin{cor}\label{mult D}
Let $D$ be the derivative operator on $H(\C)$.
Then the family $\{ \lambda D\}_{\lambda\in \C\backslash\{0\}}$ has a common hypercyclic subspace.
\end{cor}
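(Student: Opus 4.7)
The plan is to identify $H(\C)$ with a K\"othe sequence space so that $D$ becomes a weighted backward shift, and then apply Corollary~\ref{easycrit} (or rather its natural complex-parameter extension). This mirrors the strategy used for $\{B_{w_\lambda}\}_{\lambda>1}$ in Corollary~\ref{Ex:Costa}.

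First I would write $H(\C)=\Lambda^1(A)$ for $a_{j,k}=j^k$, using the Taylor expansion: $f(z)=\sum_{k\ge0}f_kz^k$ corresponds to $(f_k)_k$, and the norms $p_j(f)=\sum_k|f_k|j^k$ generate the standard compact-open topology. Under this identification, $D$ acts on the canonical basis $e_k(z)=z^k$ by $De_k=ke_{k-1}$, so $D$ is the weighted backward shift with weights $w_k=k$, and $\lambda D$ is the weighted shift $B_{w_\lambda}$ with $w_{\lambda,k}=\lambda k$.

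Next I would verify the growth condition in Corollary~\ref{easycrit}. Given a compact $K\subset\C\setminus\{0\}$ with $R:=\sup_{\lambda\in K}|\lambda|$, for each $j\ge1$ I would take $m(K,j)$ to be any integer exceeding $Rj$ and pick any $C_{K,j}>0$. Then for each fixed $n\ge1$ a direct calculation using $(\lambda D)^n e_k=\lambda^n k(k-1)\cdots(k-n+1)e_{k-n}$ yields
\[
\sup_{\lambda\in K}\frac{p_j((\lambda D)^n e_k)}{C_{K,j}\,p_{m(K,j)}(e_k)}=\frac{R^n\,k(k-1)\cdots(k-n+1)\,j^{k-n}}{C_{K,j}\,m(K,j)^k},
\]
and the factor $(j/m(K,j))^k$ decays exponentially in $k$ while the falling factorial grows only polynomially, so the $\limsup$ as $k\to\infty$ equals $0\le 1$.

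The Common Hypercyclicity Criterion for $\{\lambda D\}_{\lambda\in\C\setminus\{0\}}$ is supplied by the work of Costakis and Mavroudis (in the form needed for Definition~\ref{D:CHC} via Remark~\ref{R:CHC}(a)), so the hypotheses of Corollary~\ref{easycrit} hold and the conclusion follows. I expect the only real subtlety will be making the extension of the criterion from a real parameter to a complex one rigorous: Definition~\ref{D:CHC} as written takes $\Lambda\subseteq\R$, whereas here $\Lambda=\C\setminus\{0\}$. The fix is routine once one parametrizes $\lambda=re^{i\theta}$ by two real parameters and replaces compact subintervals by compact product sets $[a,b]\times[\alpha,\beta]$; the proofs of Theorem~\ref{NiceMn}, Theorem~\ref{CritM0}, Theorem~\ref{Mk com}, and Corollary~\ref{easycrit} go through verbatim with this cosmetic change. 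With that bookkeeping in place, Corollary~\ref{easycrit} delivers a common hypercyclic subspace for $\{\lambda D\}_{0\ne\lambda\in\C}$.
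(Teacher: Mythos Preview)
Your identification of $H(\C)$ with the K\"othe space $\Lambda^1(A)$ for $a_{j,k}=j^k$ and your verification of the growth condition in Corollary~\ref{easycrit} are correct and essentially match the paper's argument (the paper simply takes $m(K,j)=2j$, but your choice works as well).

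The gap is in your passage from a real parameter interval to $\Lambda=\C\setminus\{0\}$. The claim that extending the framework of Definition~\ref{D:CHC}, Theorem~\ref{CritM0} and Theorem~\ref{Mk com} to a two-real-parameter set $[a,b]\times[\alpha,\beta]$ is a ``cosmetic change'' is not correct: the Common Hypercyclicity Criterion uses the linear order on $\R$ in an essential way. Look at the ordered chains $\lambda\ge\mu_0\ge\cdots\ge\mu_m$ in conditions (1)--(3) of Definition~\ref{D:CHC}, and at the construction of the partition $\lambda_0<\lambda_1<\cdots<\lambda_L$ of $[a,b]$ via the divergence of $\sum_k\delta_k$ in the proof of Theorem~\ref{CritM0}. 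These arguments do not carry over ``verbatim'' to a rectangle in $\R^2$; two-parameter common hypercyclicity criteria are genuinely more delicate and are not obtained by cosmetic adjustments of the one-dimensional version.

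The paper sidesteps this difficulty entirely. It applies Corollary~\ref{easycrit} only to the family $\{\lambda D\}_{\lambda>0}$, which does satisfy (CHC) by Costakis--Sambarino, obtains a common hypercyclic subspace $M$ for that real-parameter family, and then invokes the theorem of Le\'on and M\"uller \cite{Leon2} that $\lambda T$ and $|\lambda|T$ always share the same set of hypercyclic vectors. Thus every nonzero $x\in M$, being hypercyclic for $|\lambda|D$ with $|\lambda|>0$, is automatically hypercyclic for $\lambda D$ for every $\lambda\in\C\setminus\{0\}$. This rotation trick is the missing idea in your proposal, and it is both simpler and rigorous.
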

\begin{proof}
We know that the family $\{ \lambda D\}_{\lambda> 0}$ satisfies $(CHC)$ \cite{Costakis2}. Moreover, for any $K=[a,b]\subset ]0,\infty[$ and any $n\ge 1$ we have
\begin{align*}
\limsup_{k\to\infty}\sup_{\lambda\in K}\frac{p_j(\lambda^nD^ne_k)}{p_{2j}(e_k)}
&=\limsup_{k\to\infty}\sup_{\lambda\in K}\frac{p_j(\lambda^nD^ne_{k+n})}{p_{2j}(e_{k+n})}\\
&= \limsup_{k\to\infty}\sup_{\lambda\in K}\frac{\lambda^n \prod_{\nu=1}^{n}(k+\nu)p_j(e_k)}{p_{2j}(e_{k+n})}\\
&\le \limsup_{k\to\infty}\frac{b^n (k+n)^n j^k}{(2j)^{k+n}}=0.
\end{align*}
Thus by Corollary~\ref{easycrit} the family $\{ \lambda D\}_{\lambda>0}$ has a common hypercyclic subspace. The conclusion now follows from the fact that for any complex scalar $\lambda$ and operator $T$ the operators
$\lambda T$ and $|\lambda| T$ have the same set of hypercyclic vectors \cite{Leon2}.
\end{proof}

Corollary~\ref{mult D} extends to operators $P(D)$ where $P$ is a non-constant polynomial.
\begin{prop} \label{P:P(D)common}
Let $D$ be the derivative operator on $H(\C)$ and let $P$ be a non-constant polynomial.
Then $\{ \lambda P(D)\}_{\lambda\in \C\backslash\{0\}}$ has a common hypercyclic subspace.
\end{prop}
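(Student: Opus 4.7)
My plan is to reduce the problem to one for the pure $N$-th derivative $D^N$ via the Delsarte--Lions intertwining (Lemma \ref{L:Delsarte}), and then invoke Corollary \ref{mult D} together with Ansari's theorem.

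Write $P(z)=a_N z^N + a_{N-1}z^{N-1}+\cdots + a_0$ with $a_N\ne 0$, so that $a_N^{-1}P(D)=D^N+(a_{N-1}/a_N)D^{N-1}+\cdots+(a_0/a_N)I$ has entire (indeed constant) coefficients. Lemma \ref{L:Delsarte} then yields an onto isomorphism $U\colon H(\C)\to H(\C)$ with $U\bigl(a_N^{-1}P(D)\bigr)=D^N U$, i.e.\ $P(D)=a_N U^{-1} D^N U$. Consequently $\lambda P(D)=(\lambda a_N)\, U^{-1}D^N U$ for every $\lambda\in\C\setminus\{0\}$, and since $\lambda\mapsto \lambda a_N$ is a bijection of $\C\setminus\{0\}$, the family $\{\lambda P(D)\}_{\lambda\in\C\setminus\{0\}}$ is simultaneously similar via $U$ to $\{\mu D^N\}_{\mu\in\C\setminus\{0\}}$. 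Hence, pulling back any common hypercyclic subspace for the latter through $U^{-1}$ (which is an isomorphism, and so preserves closedness and infinite-dimensionality) yields a common hypercyclic subspace for the former; the task reduces to handling $\{\mu D^N\}_{\mu\in\C\setminus\{0\}}$.

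For this, I would take the common hypercyclic subspace $M$ for $\{\nu D\}_{\nu\in\C\setminus\{0\}}$ supplied by Corollary \ref{mult D} and show that \emph{the same} $M$ already works. Given $\mu\in\C\setminus\{0\}$, pick $\nu\in\C\setminus\{0\}$ with $\nu^N=\mu$ (possible because $\C$ is algebraically closed). Each non-zero $x\in M$ is hypercyclic for $\nu D$, hence by Ansari's theorem (hypercyclic vectors of $T$ and $T^N$ coincide on a separable Fr\'echet space) $x$ is hypercyclic for $(\nu D)^N=\mu D^N$. Since $\mu$ was arbitrary, $x$ is common hypercyclic for $\{\mu D^N\}_{\mu\in\C\setminus\{0\}}$, so $M$ is a common hypercyclic subspace for this family, and $U^{-1}(M)$ is the desired common hypercyclic subspace for $\{\lambda P(D)\}_{\lambda\in\C\setminus\{0\}}$.

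The whole argument is short because Delsarte--Lions does the heavy lifting and Ansari's theorem handles the passage from degree $1$ to degree $N$. The only place complex scalars are genuinely used is the extraction of an $N$-th root $\nu$ of $\mu$, without which one could not turn a common hypercyclic subspace for $\{\nu D\}$ into one for $\{\mu D^N\}$; I do not anticipate any other obstacle.
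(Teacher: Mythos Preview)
Your proof is correct and takes a genuinely different route from the paper's. The paper proves this proposition directly by verifying the hypotheses of Criterion~$(M_k)$ for common hypercyclic subspaces (Theorem~\ref{Mk com}): it invokes the equicontinuity of $(P(D)^k)$ along a sequence $(M_k)$ established in \cite{Menet1}, together with the inclusion $M_k\subset\overline{\mathrm{span}}\{e_n:n\ge k\}$ and the resulting norm inequality $p_j(x)\le n^{-k}p_{nj}(x)$, to absorb the factor $\lambda^k$ uniformly over $\lambda\in[\tfrac1n,n]$; then it applies Le\'on--M\"uller rotation invariance to pass from $\lambda>0$ to $\lambda\in\C\setminus\{0\}$. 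Your argument instead reduces $P(D)$ to $D^N$ via Delsarte--Lions (which the paper only deploys \emph{after} this proposition, in Corollary~\ref{C:NonConstantCommon}), and then reduces $D^N$ to $D$ via Ansari's theorem, so that the already-established Corollary~\ref{mult D} finishes the job. The paper's approach illustrates the direct applicability of its new $(M_k)$ criterion, while yours is shorter and more structural; it also makes transparent why complex scalars matter, namely the need for $N$-th roots. One small remark: you should note that Ansari's theorem is valid on separable Fr\'echet spaces (see, e.g., \cite[Theorem~6.2]{Grosse}), since the original statement is for Banach spaces.
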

\begin{proof}

 The family $\{ (\lambda^k P(D)^k)_{k\ge 1} \}_{\lambda>0}$ satisfies $(CHC)$ (see the  proof of \cite[Proposition 2.2]{Costakis} and Remark~\ref{R:CHC}).
On the other hand, we know~\cite{Menet1} that there exists a non-increasing sequence of infinite-dimensional closed subspaces $(M_k)$ in $H(\C)$ such that for any $j\ge 1$, there exist a positive number $C_{j}$ and two integers $m(j),k(j)$ such that for any $k\ge k(j)$, any $x\in M_k$,
\[p_j(P(D)^{k}x)\le C_{j} p_{m(j)}(x).\]
Without loss of generality, we can assume that for any $k\ge 1$, the subspace $M_k$ is included in $\overline{\text{span}}\{e_n:n\ge k\}$. We recall that the sequence $(p_j)$  of norms considered here is given by  $p_j((x_k)_k)=\sum_{k=0}^{\infty}|x_kj^k|$. In particular, we deduce from the previous inclusion that for any $x\in M_k$, any $j\ge 1$, any $n\ge 1$, \[p_j(x)\le n^{-k}p_{nj}(x).\]

Let $n\ge 1$. If we consider $\Lambda^{2}_n=[\frac{1}{n},n]$, $C_{n,j}=C_j$, $m(n,j)=nm(j)$, $k(n,j)=k(j)$, then for
any $\lambda\in \Lambda^2_n$, any $k\ge k(n,j)$, any $x\in M_k$, we have
\[p_j((\lambda P(D))^{k}x)\le \lambda^k C_{j} p_{m(j)}(x)\le \lambda^k C_{j} n^{-k}p_{nm(j)}(x)\le C_{j}p_{m(n,j)}(x).\]
By Remark~\ref{remequi} and Theorem~\ref{Mk com}, the family $\{ \lambda P(D)\}_{\lambda>0}$ has a common hypercyclic subspace. So $\{ \lambda P(D)\}_{\lambda\in \C\backslash\{0\}}$ has a common hypercyclic subspace, as for any operator $T$ and  any complex scalar $\lambda$ the operators $\lambda T$ and $|\lambda |T$ have the same set of hypercyclic vectors \cite{Leon2}.
\end{proof}

As with Corollary~\ref{C:ufhsNonConstant}, Lemma~\ref{L:Delsarte} by Delsarte and Lions allows to extend Proposition~\ref{P:P(D)common} to  any linear differential operators of finite order whose coefficients -other than the leading one- may be non-constant.

\begin{cor} \label{C:NonConstantCommon}
For each $N\ge 1$ and $a_0,\dots, a_{N-1}\in H(\mathbb{C})$, consider the differential operator
$
T=D^N+a_{N-1}(z) D^{N-1}+\cdots + a_0(z) I: H(\mathbb{C})\to H(\mathbb{C})
$.
Then
\[
\{ \lambda T \}_{\lambda >0}
\] has a common hypercyclic subspace.
\end{cor}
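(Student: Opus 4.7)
The plan is to reduce the statement to the previously established Proposition~\ref{P:P(D)common} (with the monomial $P(z)=z^N$) via the Delsarte--Lions change of variables from Lemma~\ref{L:Delsarte}. By that lemma, there is an onto isomorphism $U:H(\C)\to H(\C)$ intertwining $T$ with $D^N$, namely $UT=D^N U$, or equivalently $T=U^{-1}D^N U$. Consequently $\lambda T = U^{-1}(\lambda D^N)U$ and, by iteration, $(\lambda T)^n = U^{-1}(\lambda D^N)^n U$ for every $n\ge 0$ and every $\lambda>0$.

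First I would invoke Proposition~\ref{P:P(D)common} applied to the non-constant polynomial $P(z)=z^N$: the family $\{\lambda D^N\}_{\lambda>0}$ admits a common hypercyclic subspace $M\subseteq H(\C)$, that is, a closed infinite-dimensional subspace each of whose non-zero vectors is hypercyclic for every $\lambda D^N$, $\lambda>0$. Next I would define $N_0:=U^{-1}(M)$. Since $U$ and $U^{-1}$ are continuous linear bijections, $N_0$ is automatically closed and infinite-dimensional.

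To verify that $N_0$ is a common hypercyclic subspace for $\{\lambda T\}_{\lambda>0}$, fix any $x\in N_0\setminus\{0\}$, any $\lambda>0$, and any non-empty open subset $V\subseteq H(\C)$. Writing $y:=Ux\in M\setminus\{0\}$, the conjugacy gives
\[
\{n\ge 0: (\lambda T)^n x\in V\} = \{n\ge 0:(\lambda D^N)^n y\in U(V)\}.
\]
Because $U$ is an open mapping, $U(V)$ is non-empty and open, and since $y$ is a common hypercyclic vector for $\{\lambda D^N\}_{\lambda>0}$, the right-hand set is non-empty (in fact, the orbit of $y$ under $\lambda D^N$ visits $U(V)$ infinitely often). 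Thus the orbit of $x$ under $\lambda T$ meets every non-empty open set in $H(\C)$, so $x$ is hypercyclic for $\lambda T$. As this holds for every $\lambda>0$, the subspace $N_0$ is a common hypercyclic subspace for the family $\{\lambda T\}_{\lambda>0}$, completing the proof.

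There is no serious obstacle here: the whole content is the intertwining supplied by Lemma~\ref{L:Delsarte}, which converts the problem to the constant-coefficient case already handled by Proposition~\ref{P:P(D)common}, and the transfer of the common hypercyclic subspace through a continuous linear isomorphism is a routine verification.
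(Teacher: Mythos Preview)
Your proof is correct and follows exactly the approach the paper intends: reduce to the constant-coefficient case $D^N$ via the Delsarte--Lions conjugacy of Lemma~\ref{L:Delsarte}, invoke Proposition~\ref{P:P(D)common} with $P(z)=z^N$, and transport the resulting common hypercyclic subspace through the isomorphism $U^{-1}$. The paper merely signals this with the sentence ``As with Corollary~\ref{C:ufhsNonConstant}, Lemma~\ref{L:Delsarte} by Delsarte and Lions allows to extend Proposition~\ref{P:P(D)common}\dots'' without spelling out the routine verification you provide.
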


Recall that for any non-constant entire function  $\phi$ of exponential type the family  $\{  \lambda \phi (D) \}_{\lambda >0}$ has a common hypercyclic vector. This is due to Costakis and Mavroudis~\cite{Costakis} for the case $\phi$ is a polynomial, to Bernal  \cite{Bernal2009} when $\phi$ has exponential growth order no larger than $\frac{1}{2}$,  and due to Shkarin \cite{Shkarin} in the general case.
Proposition~\ref{P:P(D)common} motivates the following.
\begin{problem}
Let $\phi$ be entire, transcendental, and of exponential type.   Does the family  $\{ \lambda \phi (D) \}_{\lambda \in \C\setminus\{ 0 \} }$ have a common hypercyclic subspace? 
\end{problem}

\subsection{Multiples of an operator on a complex Banach space}\label{Sec:Charac}
A characterization by Gonz\'{a}lez, Le\'{o}n and Montes~\cite{Gonzalez} asserts that an operator $T$ on a complex Banach space satisfying the Hypercyclicity Criterion has a hypercyclic subspace if and only if its essential spectrum intersects the closed unit disc. We recall that if the essential spectrum of $T$ does not intersect the closed unit disk then for any infinite-dimensional closed subspace $M$, there exists $x\in M$ such that $\|T^n x\|\to \infty$. We denote by $\sigma_e(T)$ the essential spectrum of $T$, i.e. the set of complex scalars $\lambda$ such that $\lambda Id -T$ is not a Fredholm operator, where an operator is said to be Fredholm if its range is closed and its kernel and cokernel are finite-dimensional.

While it is likely not possible to extend such spectral characterization to arbitrary families of operators (even if they consist of two operators) as evidenced in \cite{Bes4}, we show that it indeed extends to certain 
 families of the form $\{ \lambda T\}_{\lambda\in\Lambda}$ thanks to Criterion~$(M_k)$ for common hypercyclic subspaces. 
Throughout this section, we let $X$ be a complex Banach space, $T\in L(X)$ and let $\mathcal{P}=(P_\lambda)_{\lambda\in \Lambda}$ be a family of polynomials. 
We denote by 
\[r_{\mathcal{P}}:=\inf\{r> 0: \exists \lambda\in \Lambda,\ P_\lambda(B(0,r)^c)\cap \overline{B(0,1)}= \emptyset\}.\]
We start by stating a sufficient condition for the non-existence of common hypercyclic subspaces for the family $\{ P_{\lambda}(T)\}_{\lambda\in\Lambda}$ in terms of the essential spectrum of~$T$.  

\begin{prop}\label{prop E}
If $\sigma_e(T)\cap \overline{B(0,r_{\mathcal{P}})}=\emptyset$ (with $\overline{B(0,0)}=\{0\}$), then
there exists $\lambda\in \Lambda$ such that each infinite-dimensional closed subspace $M$ of $X$ contains some $x$ for which
\[\|P^n_{\lambda}(T)x\|\xrightarrow[n\to \infty]{} \infty.\]
In particular, there exists $\lambda\in \Lambda$ such that $P_{\lambda}(T)$ has no hypercyclic subspace.
\end{prop}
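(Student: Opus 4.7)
The plan is to reduce the statement to the single-operator spectral criterion of Gonz\'alez, Le\'on and Montes (recalled just before the proposition) by applying the spectral mapping theorem for the essential spectrum to a well-chosen $P_\lambda$.

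First, I would unpack the hypothesis. Since $\sigma_e(T)$ is a compact subset of $\C$ disjoint from the closed set $\overline{B(0,r_{\mathcal{P}})}$, we have
\[
\mathrm{dist}(\sigma_e(T),\overline{B(0,r_{\mathcal{P}})})>0,
\]
so there exists $r>r_{\mathcal{P}}$ with $\sigma_e(T)\subset B(0,r)^c$. (The degenerate case $r_{\mathcal{P}}=0$ works the same way, using the convention $\overline{B(0,0)}=\{0\}$ and the fact that $0\notin\sigma_e(T)$.)

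Next, I would use the infimum definition of $r_{\mathcal{P}}$ to pick some $r'\in (r_{\mathcal{P}},r)$ and $\lambda\in\Lambda$ such that $P_\lambda(B(0,r')^c)\cap \overline{B(0,1)}=\emptyset$. Because $B(0,r)^c\subset B(0,r')^c$, this forces
\[
P_\lambda(\sigma_e(T))\cap \overline{B(0,1)}=\emptyset.
\]
I would then invoke the spectral mapping theorem for the essential spectrum: for any polynomial $P$ and any $T\in L(X)$ on a complex Banach space, $\sigma_e(P(T))=P(\sigma_e(T))$. Applied to $P_\lambda$, this yields $\sigma_e(P_\lambda(T))\cap\overline{B(0,1)}=\emptyset$.

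Finally, I would apply the theorem of Gonz\'alez, Le\'on and Montes recalled in the excerpt: if the essential spectrum of an operator $S\in L(X)$ does not meet the closed unit disk, then every infinite-dimensional closed subspace $M\subset X$ contains some $x$ with $\|S^n x\|\to\infty$. Taking $S=P_\lambda(T)$ delivers the conclusion, and the final sentence of the proposition follows because a hypercyclic vector must satisfy $\liminf_n \|S^n x\|=0$, which is incompatible with $\|S^n x\|\to\infty$.

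The main potential obstacle is subtle rather than technical: one must verify that the infimum definition of $r_{\mathcal{P}}$ gives both a witness $\lambda$ and a witness radius $r'$ that can be taken strictly below the $r$ extracted from compactness of $\sigma_e(T)$, and that the degenerate case $r_{\mathcal{P}}=0$ is genuinely covered. Beyond this bookkeeping, the proof is a clean reduction to the single-operator theory via polynomial spectral mapping for $\sigma_e$.
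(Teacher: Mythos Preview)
Your proof is correct and follows essentially the same route as the paper's: compactness of $\sigma_e(T)$ to get a radius $r>r_{\mathcal{P}}$, the definition of $r_{\mathcal{P}}$ to select $\lambda$, the spectral mapping theorem $\sigma_e(P_\lambda(T))=P_\lambda(\sigma_e(T))$, and then the Gonz\'alez--Le\'on--Montes result. The only cosmetic difference is that you introduce an intermediate radius $r'\in(r_{\mathcal{P}},r)$, whereas the paper directly uses that the set defining $r_{\mathcal{P}}$ is upward-closed (if $r'$ works then so does any $r>r'$), so $r>r_{\mathcal{P}}$ itself already admits a witness $\lambda$.
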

\begin{proof}
Since $\sigma_e(T)$ is compact and $\sigma_e(T)\cap \overline{B(0,r_{\mathcal{P}})}=\emptyset$, there exists ${r>r_{\mathcal{P}}}$ such that 
\[\sigma_e(T)\cap B(0,r)=\emptyset.\]
By definition of $r_{\mathcal{P}}$, we know that there exists $\lambda\in \Lambda$ such that
 \[P_\lambda(B(0,r)^c)\cap \overline{B(0,1)}= \emptyset.\]
  Therefore, since $\sigma_e(P_\lambda(T))=P_\lambda(\sigma_e(T))$, we deduce that $\sigma_e(P_\lambda(T))\subset P_\lambda(B(0,r)^c)$ and thus
\[\sigma_e(P_\lambda(T))\cap \overline{B(0,1)}= \emptyset,\]
i.e. the essential spectrum of $P_{\lambda}(T)$ does not intersect the closed unit disk. 
We conclude that for any infinite-dimensional closed subspace $M\subset X$, there exists $x\in M$ such that
\[\|P^n_{\lambda}(T)x\|\xrightarrow[n\to \infty]{} \infty.\]
\end{proof}

Under certain conditions on the essential spectrum of~$T$, we can construct a non-increasing sequence $(M_n)$ of closed infinite-dimensional subspaces of $X$ so that the assumptions of Criterion~$(M_k)$ for common hypercyclic subspaces are met.

\begin{prop}\label{Mk}
 Suppose  $\sup_{\lambda\in \Lambda}|P_{\lambda}(\mu)|\le 1$ and that
 $\text{\emph{Ran}}(T-\mu Id)$ is dense in $X$, for some $\mu\in \sigma_{e}(T)$.
Then there exists a non-increasing sequence of infinite-dimensional closed subspaces $(M_n)$ in $X$ and a chain $(\Lambda_n)$ of $\Lambda$ such that for any $n\ge 1$, any $\lambda\in \Lambda_n$, any $m\le n$, we have
\[\|P^m_{\lambda}(T)x\|\le 2\|x\| \quad \text{for any $x\in M_n$.}\]
\end{prop}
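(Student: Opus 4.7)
The plan is to construct a basic sequence $(u_k)_{k\ge 1}$ along which $T-\mu I$ acts arbitrarily small, set $M_n:=\overline{\mathrm{span}}\{u_k:k\ge n\}$, and derive the estimate from the polynomial identity
\[
P_\lambda(T)^m-P_\lambda(\mu)^m I=\tilde R_\lambda(T)\Bigl(\sum_{j=0}^{m-1}P_\lambda(T)^j P_\lambda(\mu)^{m-1-j}\Bigr)(T-\mu I),
\]
where $\tilde R_\lambda(z):=(P_\lambda(z)-P_\lambda(\mu))/(z-\mu)$ and all factors commute. The first key observation is that $S:=T-\mu I$ is not bounded below on $X$: otherwise $S$ would be a topological isomorphism onto $X$ (since its range is also dense), hence Fredholm, contradicting $\mu\in\sigma_e(T)$. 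Thus unit vectors $v$ with $\|Sv\|$ arbitrarily small exist in every finite-codimensional closed subspace of $X$.

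Using this fact together with the classical Mazur-type procedure for building basic sequences (as employed in \cite{Gonzalez,Menet1,Petersson}) we construct inductively a basic sequence $(u_k)$ in $X$ with $\|u_k\|=1$, basic constant at most $2$, and $\|Su_k\|\le\delta_k$, where the positive scalars $\delta_k$ are fixed in advance so that $\sum_{k\ge n}\delta_k\le 1/(8n^n)$ for every $n\ge 1$ (e.g.\ $\delta_k$ of order $k^{-k}$ with a small multiplicative constant suffices). The subspaces $M_n:=\overline{\mathrm{span}}\{u_k:k\ge n\}$ are then non-increasing, closed and infinite-dimensional. For the chain we set
\[
\Lambda_n:=\bigl\{\lambda\in\Lambda:\max(\|P_\lambda(T)\|,\|\tilde R_\lambda(T)\|)\le n\bigr\},
\]
which is non-decreasing in $n$ and satisfies $\bigcup_n\Lambda_n=\Lambda$ since both norms are finite for each $\lambda$.

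Now fix $n\ge 1$, $\lambda\in\Lambda_n$, $m\le n$, and $x=\sum_{k\ge n}c_ku_k\in M_n$. The basic-sequence property gives $|c_k|\le 4\|x\|$ for every $k$, so
\[
\|Sx\|\le\sum_{k\ge n}|c_k|\|Su_k\|\le 4\|x\|\sum_{k\ge n}\delta_k\le\frac{\|x\|}{2n^n}.
\]
A direct product bound gives
\[
\Bigl\|\tilde R_\lambda(T)\sum_{j=0}^{m-1}P_\lambda(T)^j P_\lambda(\mu)^{m-1-j}\Bigr\|\le n\sum_{j=0}^{m-1}n^j\le 2n^n,
\]
using $\|P_\lambda(T)\|\le n$, $\|\tilde R_\lambda(T)\|\le n$ and $|P_\lambda(\mu)|\le 1$. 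Combining these with the identity above and $|P_\lambda(\mu)|^m\le 1$,
\[
\|P_\lambda(T)^m x\|\le\|x\|+2n^n\cdot\|Sx\|\le 2\|x\|,
\]
as required.

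The main obstacle is the first step: producing $(u_k)$ basic with a uniformly controlled basic constant while simultaneously meeting the pre-specified smallness $\|Su_k\|\le\delta_k$. This is the only place where the spectral hypotheses $\mu\in\sigma_e(T)$ and the density of $\mathrm{Ran}(T-\mu I)$ are used; once this construction is in hand, the definition of the chain and the verification of the bound reduce to the elementary algebraic estimate above.
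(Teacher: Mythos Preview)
Your argument is correct but follows a genuinely different route from the paper's proof. The paper splits into two cases according to whether $\dim\ker(T-\mu I)=\infty$. When the kernel is infinite-dimensional it simply takes $M_n=\ker(T-\mu I)$; in the complementary case it invokes the semi-Fredholm structural lemma from \cite{Gonzalez} to find an infinite-dimensional closed subspace $M$ on which $T=\mu I+K$ with $K$ compact, then passes to finite-codimensional subspaces where the compact operators $K_k$ arising in $T^k|_M=\mu^k I+K_k$ have small norm, and defines the chain $\Lambda_n$ through bounds on the degree and coefficient $\ell^1$-norm of $P_\lambda$. The final estimate expands $P_\lambda(T)^m=\sum_k c_k T^k$ and controls $\sum |c_k|\,\|K_{k}|_{M_n}\|$.

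Your approach sidesteps both the case split and the compact-perturbation lemma: you observe directly that $T-\mu I$ fails to be bounded below on every finite-codimensional subspace (a short Fredholm argument using the density of the range), feed this into the Mazur construction to get a basic sequence with $\|(T-\mu I)u_k\|$ prescribed small, and then replace the paper's coefficient bookkeeping by the single algebraic factorization $P_\lambda(T)^m-P_\lambda(\mu)^m I=(\text{bounded operator})\cdot(T-\mu I)$. Your chain $\Lambda_n$ is defined through the operator norms $\|P_\lambda(T)\|$ and $\|\tilde R_\lambda(T)\|$ rather than the coefficients of $P_\lambda$. The net effect is a more self-contained and unified argument; the paper's version, by contrast, stays closer to the existing hypercyclic-subspace machinery of \cite{Gonzalez} and makes the compact-perturbation structure explicit, which is conceptually informative even if not strictly needed here.
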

\begin{proof}
Since $\text{Ran}(T-\mu Id)$ is dense and $\mu\in \sigma_{e}(T)$, either $\dim \ker(T-\mu Id)=\infty$ or $\text{Ran}(T-\mu Id)$ is not closed. If $\dim \ker(T-\mu Id)=\infty$, then we consider $M_n=\ker(T-\mu Id)$.  Therefore, for any $n\ge 1$, any $x\in M_n$, and any $\lambda\in \Lambda$ we have
\[\|P^n_{\lambda}(T)x\|=\|(P_{\lambda}(\mu))^n x\|\le \|x\|.\]

We now suppose that $\dim \ker(T-\mu Id)<\infty$ and $\text{Ran}(T-\mu Id)$ is not closed. We can then show that there exists an infinite-dimensional closed subspace $M$ in $X$ and a compact operator $K$ such that $T_{|M}=\mu Id + K$ (see \cite{Gonzalez}). Therefore, for any $n\ge 0$ we have $T^n_{|M}=\mu^n Id +K_n$ where $K_n$ is a compact operator. Since $K_n$ is compact, we know that for any $\varepsilon>0$, there exists a closed subspace of finite-codimension $E_{n,\varepsilon}$ in $X$ such that
$\|K_{n|E_{n,\varepsilon}}\|\le \varepsilon$.

We consider \[\Lambda_n=\{\lambda\in \Lambda: \text{deg}P_{\lambda}\le n \ \text{and}\ P_{\lambda}(x)=\sum_{k=0}^na_kx^k \ \text{with}\ \sum_{k=0}^{n}|a_k|\le n\}\] and we let $M_0:=M$ and 
\[M_n:=M_{n-1}\cap \bigcap_{k \le n^2}E_{k,\varepsilon_n} \quad\text{with}\quad \varepsilon_n=\frac{1}{n^{n}}.\]
We remark that for any $n\ge 1$, any $\lambda\in \Lambda_n$, any $m\ge 0$, 
\[P^m_\lambda(T)=\sum_{k=0}^{mn}c_kT^k \quad \text{with }\sum_{k=0}^{mn}|c_k|\le n^m.\]

We deduce that for any $n\ge 1$, any $\lambda\in \Lambda_n$, any $m\le n$, and any $x\in M_n$,
\begin{align*}
\|P^m_\lambda(T)x\|&= \Big\|\sum_{k=0}^{mn}c_kT^kx\Big\|=
\Big\|\sum_{k=0}^{mn}c_k(\mu^kx+K_kx)\Big\|\\
&\le |P_\lambda(\mu)|^m\cdot\|x\|+ \sum_{k=0}^{mn}(|c_k|\cdot \|K_{k|M_n}\|)\|x\|
\le \|x\| + n^{n} \varepsilon_n \|x\|\le 2\|x\|.
\end{align*}
\end{proof}

Thanks to Proposition~\ref{prop E} and Proposition~\ref{Mk}, we can now generalize the characterization of Gonz\'{a}lez, Le\'{o}n and Montes as follows:

\begin{theorem}\label{Charac com}
Let $X$ be a separable infinite-dimensional complex Banach space, $T\in L(X)$ and $\{P_{\lambda}\}_{\lambda\in \Lambda}$ a family of polynomials.
If $\{ P_{\lambda}(T)\}_{\lambda\in \Lambda}$ satisfies (CHC) and
if for any $\mu\in \overline{B(0,r_{\mathcal{P}})}$, $\text{\emph{Ran}}(T-\mu Id)$ is dense and ${\sup_{\lambda\in \Lambda}|P_{\lambda}(\mu)|\le 1}$, 
then the following assertions are equivalent:
\begin{enumerate}
\item for any $\lambda\in \Lambda$, $P_{\lambda}(T)$ has a hypercyclic subspace;
\item $\{ P_{\lambda}(T)\}_{\lambda\in \Lambda}$ has a common hypercyclic subspace;
\item $\{ P_{\lambda}(T))\}_{\lambda\in \Lambda}$ satisfies Criterion~$M_0$ for common hypercyclic subspaces;
\item $\{ P_{\lambda}(T)\}_{\lambda\in \Lambda}$ satisfies Criterion~$(M_k)$ for common hypercyclic subspaces;
\item the essential spectrum of $T$ intersects $\overline{B(0,r_{\mathcal{P}})}$.
\end{enumerate}
\end{theorem}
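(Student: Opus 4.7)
I would prove the theorem by establishing the cyclic chain of implications $(3) \Rightarrow (2) \Rightarrow (1) \Rightarrow (5) \Rightarrow (4) \Rightarrow (3)$. Of these, $(2) \Rightarrow (1)$ is immediate from the definitions (a common hypercyclic subspace is a hypercyclic subspace for each individual operator), and $(3) \Rightarrow (2)$ is a direct application of Theorem~\ref{CritM0}. For $(1) \Rightarrow (5)$ I would argue by contrapositive using Proposition~\ref{prop E}: if $\sigma_e(T)\cap \overline{B(0,r_{\mathcal{P}})}=\emptyset$, then some $P_\lambda(T)$ has no hypercyclic subspace, violating $(1)$.

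The heart of the proof is the implication $(5) \Rightarrow (4)$. Pick any $\mu \in \sigma_e(T)\cap \overline{B(0,r_{\mathcal{P}})}$. By the standing hypotheses of the theorem, $\sup_{\lambda\in \Lambda}|P_\lambda(\mu)|\le 1$ and $\text{Ran}(T-\mu I)$ is dense, so Proposition~\ref{Mk} produces a chain $(\Lambda_n)$ of $\Lambda$ and a non-increasing sequence $(M_n)$ of infinite-dimensional closed subspaces of $X$ with $\|P_\lambda(T)^m x\| \le 2\|x\|$ whenever $x\in M_n$, $\lambda \in \Lambda_n$ and $m\le n$. To match the hypotheses of Theorem~\ref{Mk com}, I would verify that for each fixed $N$ the family $\{(P_\lambda(T)^k)_{k\ge 1}\}_{\lambda\in \Lambda_N}$ is uniformly equicontinuous along $(M_k)$: when $k\ge N$, the chain property gives $\lambda \in \Lambda_k$, so Proposition~\ref{Mk} directly yields $\|P_\lambda(T)^k x\|\le 2\|x\|$ for $x\in M_k$; for the finitely many $k<N$, the explicit coefficient control built into the definition of $\Lambda_N$ in the proof of Proposition~\ref{Mk} bounds $\|P_\lambda(T)\|^k$ uniformly over $\lambda \in \Lambda_N$. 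Taking the maximum over these cases gives the required uniform bound, so Criterion $(M_k)$ is satisfied.

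For $(4) \Rightarrow (3)$, I would follow the argument in the proof of Theorem~\ref{Mk com}. Take a chain $(K_n)$ of compact subintervals of $\Lambda$; since $\lambda \mapsto P_\lambda(T)^k x$ is continuous on each $K_n$, the Banach--Steinhaus theorem gives equicontinuity of $\{P_\lambda(T)^k\}_{\lambda\in K_n}$ for each $(k,n)$. Setting $(\Lambda_n^0)=(\Lambda_n^1)=(K_n)$ and $(\Lambda_n^2)=(\Lambda_n)$ from Criterion $(M_k)$, Theorem~\ref{NiceMn} combined with the choice of $\phi$ from Proposition~\ref{prophered} (which guarantees that the resulting subsequence family still satisfies $(CHC)$) yields a strictly increasing sequence $(m_k)$ and an infinite-dimensional closed subspace $M_0$ such that $P_\lambda(T)^{m_k}x\to 0$ for every $\lambda\in \Lambda$ and $x\in M_0$, while the subsequence family satisfies $(CHC)$. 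This is precisely Criterion $M_0$ for common hypercyclic subspaces as in Theorem~\ref{CritM0}.

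The main obstacle is $(5) \Rightarrow (4)$, specifically the extension of the bound from Proposition~\ref{Mk}, which only controls iterates of order $m\le n$, to a uniform bound over \emph{all} iterates $k\ge 1$ as required by Theorem~\ref{Mk com}. This hinges on the explicit construction of the $\Lambda_n$ inside the proof of Proposition~\ref{Mk} to dispatch the small iterates. The step $(4) \Rightarrow (3)$ is essentially a transcription of the argument already used to prove Theorem~\ref{Mk com}, so its machinery is available off the shelf.
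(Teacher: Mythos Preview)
Your proof is correct and follows essentially the same cycle of implications as the paper: $\neg(5)\Rightarrow\neg(1)$ via Proposition~\ref{prop E}, $(5)\Rightarrow(4)$ via Proposition~\ref{Mk}, $(4)\Rightarrow(3)\Rightarrow(2)$ via (the proof of) Theorem~\ref{Mk com} and Theorem~\ref{CritM0}, and $(2)\Rightarrow(1)$ trivially. The paper states $(5)\Rightarrow(4)$ in one line, whereas you explicitly close the small gap between the conclusion of Proposition~\ref{Mk} (which only bounds $\|P_\lambda(T)^m|_{M_n}\|$ for $m\le n$) and the full uniform-equicontinuity requirement of Theorem~\ref{Mk com}; your fix for the finitely many iterates $k<N$ using the explicit coefficient bound in the construction of $\Lambda_N$ is exactly what is needed and is implicitly available from that proof.
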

\begin{proof}
$\neg(5)\Rightarrow \neg (1)$ follows from Proposition~\ref{prop E}.

$(5)\Rightarrow (4)$. If the essential spectrum of $T$ intersects $\overline{B(0,r_{\mathcal{P}})}$, then by assumption, there exists $\nu\in \sigma_e(T)$ such that $\text{Ran}(T-\nu Id)$ is dense and ${\sup_{\lambda\in \Lambda}|P_{\lambda}(\nu)|\le 1}$. Since $(P_{\lambda}(T))_{\lambda\in \Lambda}$ satisfies (CHC), we deduce from Proposition~\ref{Mk} that
$(P_{\lambda}(T))_{\lambda\in \Lambda}$ satisfies the Criterion~$(M_k)$ for common hypercyclic subspaces.

$(4)\Rightarrow (3)$ and $(3)\Rightarrow (2)$ follow from Theorem~\ref{Mk com} and Theorem~\ref{CritM0}.

$(2)\Rightarrow (1)$ is immediate.
\end{proof}

In the case of scalar multiples of an operator, Theorem~\ref{Charac com} gives us the following two interesting characterizations.

\begin{cor} \label{C:a<lambda<b}
Let $T\in L(X)$, where  $X$ is a separable infinite-dimensional complex Banach space,
 and 
let $0< a<b$.
If $\{ \lambda T\}_{\lambda\in ]a,b[}$ satisfies (CHC),
then the following assertions are equivalent:
\begin{enumerate}
\item the family $\{ \lambda T\}_{\lambda\in ]a,b[}$ has a common hypercyclic subspace;
\item for any $\lambda\in ]a,b[$, the operator $\lambda T$ has a hypercyclic subspace;
\item the essential spectrum of $T$ intersects $\overline{B(0,\frac{1}{b})}$.
\end{enumerate}
\end{cor}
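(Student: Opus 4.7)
The plan is to specialize Theorem~\ref{Charac com} to the family of polynomials $\{P_\lambda\}_{\lambda\in\,]a,b[}$ defined by $P_\lambda(z) = \lambda z$, so that $P_\lambda(T) = \lambda T$. Once the hypotheses of that theorem are checked, its equivalences $(1)\Leftrightarrow(2)\Leftrightarrow(5)$ translate directly into the three equivalences sought: Theorem (1) becomes Corollary (2), Theorem (2) becomes Corollary (1), and Theorem (5) becomes Corollary (3).

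First I would identify $r_{\mathcal{P}}$. Since $\lambda > 0$, one has $P_\lambda(B(0,r)^c) = \{z\in\C : |z|>\lambda r\}$, and this set avoids $\overline{B(0,1)}$ precisely when $\lambda r \ge 1$. Hence
\[
r_{\mathcal{P}} \;=\; \inf\{r>0 : \exists\,\lambda\in\,]a,b[,\ \lambda r \ge 1\} \;=\; \inf_{\lambda\in\,]a,b[}\frac{1}{\lambda} \;=\; \frac{1}{b},
\]
so $\overline{B(0,r_{\mathcal{P}})} = \overline{B(0,1/b)}$. For every $\mu\in\overline{B(0,1/b)}$ the required uniform bound $\sup_{\lambda\in\,]a,b[} |P_\lambda(\mu)| = b|\mu|\le 1$ then holds trivially.

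The one remaining hypothesis of Theorem~\ref{Charac com} is that $\mathrm{Ran}(T-\mu\,Id)$ be dense in $X$ for each $\mu\in\overline{B(0,1/b)}$. This is where the $(CHC)$ assumption enters: by Theorem~\ref{T:CHC}, the family $\{\lambda T\}_{\lambda\in\,]a,b[}$ has a common hypercyclic vector, so in particular each $\lambda T$ is hypercyclic. A standard fact in linear dynamics then gives that the Banach space adjoint of a hypercyclic operator has empty point spectrum; since $(\lambda T)^* = \lambda T^*$ with $\lambda>0$ real, this forces $T^*-\nu\,Id$ to be injective for \emph{every} $\nu\in\C$. By the duality between dense range and injective adjoint, $\mathrm{Ran}(T-\mu\,Id)$ is dense for every $\mu\in\C$, which is far more than required.

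With all hypotheses of Theorem~\ref{Charac com} verified, its equivalences $(1)\Leftrightarrow(2)\Leftrightarrow(5)$ deliver precisely (2)$\Leftrightarrow$(1)$\Leftrightarrow$(3) of the corollary, using $r_{\mathcal{P}} = 1/b$. There is no genuine obstacle here; the only substantive computation is the identification of $r_{\mathcal{P}}$, and the density of $\mathrm{Ran}(T-\mu\,Id)$ is automatic from hypercyclicity. All of the depth has already been absorbed into Theorem~\ref{Charac com}, which itself rests on Criterion~$(M_k)$ for common hypercyclic subspaces (Theorem~\ref{Mk com}) developed earlier in this section.
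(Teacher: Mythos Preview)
Your proof is correct and follows essentially the same approach as the paper's own proof: both apply Theorem~\ref{Charac com} to the polynomials $P_\lambda(z)=\lambda z$, compute $r_{\mathcal{P}}=1/b$, verify $\sup_{\lambda}|P_\lambda(\mu)|\le 1$ on $\overline{B(0,1/b)}$, and obtain the density of $\mathrm{Ran}(T-\mu\,Id)$ from hypercyclicity of $\lambda T$ (the paper cites Bourdon's result, which is exactly the adjoint-point-spectrum argument you give).
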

\begin{proof}
Let $\mathcal{P}=\{\lambda Id:\lambda\in ]a,b[\}$. We remark that $r_{\mathcal{P}}=\frac{1}{b}$ and
for any $\mu\in \overline{B(0,r_{\mathcal{P}})}=\overline{B(0,\frac{1}{b})}$,
we have 
\[\sup_{\lambda\in ]a,b[}|\lambda \mu|=b|\mu|\le 1.\]
Moreover, since $\{ \lambda T\}_{\lambda\in ]a,b[}$ satisfies (CHC), we know in particular that
$\lambda T$ is hypercyclic for any $\lambda\in ]a,b[$ and therefore that $\text{Ran}(T-\mu Id)$ is dense for any $\mu\in \C$~\cite{Bourdon}. The conclusion follows by Theorem~\ref{Charac com}.
\end{proof}

\begin{cor} \label{C:Spectral}
Let $T\in L(X)$, where
$X$ is a separable infinite-dimensional complex Banach space, and let  $a> 0$.
If $\{ \lambda T\}_{\lambda\in ]a,\infty[}$ satisfies (CHC),
then the following assertions are equivalent:
\begin{enumerate}
\item the family $\{ \lambda T\}_{\lambda\in ]a,\infty[}$ has a common hypercyclic subspace;
\item for any $\lambda> a$, the operator $\lambda T$ has a hypercyclic subspace;
\item $0\in \sigma_{e}(T)$.
\end{enumerate}
\end{cor}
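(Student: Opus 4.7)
The plan is to deduce Corollary~\ref{C:Spectral} from the general Theorem~\ref{Charac com} by specializing to the family of polynomials $\mathcal{P}=\{P_\lambda : \lambda\in\,]a,\infty[\,\}$ with $P_\lambda(z)=\lambda z$, exactly as was done for Corollary~\ref{C:a<lambda<b}; the only new point is to handle the unbounded parameter range $b=\infty$ and check that the relevant disc $\overline{B(0,r_{\mathcal{P}})}$ collapses to $\{0\}$.

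First I would compute $r_{\mathcal{P}}$. For $\lambda>a$ the image $P_\lambda(B(0,r)^c)$ is the complement of $\overline{B(0,\lambda r)}$, so it avoids $\overline{B(0,1)}$ iff $\lambda r\ge 1$. Since $\lambda$ may be taken arbitrarily large, any $r>0$ satisfies this for some $\lambda$, whence
\[
r_{\mathcal{P}}=\inf\{r>0:\exists\lambda>a,\ P_\lambda(B(0,r)^c)\cap\overline{B(0,1)}=\emptyset\}=0,
\]
and hence (by the convention stated in Proposition~\ref{prop E}) $\overline{B(0,r_{\mathcal{P}})}=\{0\}$.

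Next I would verify the two hypotheses required to invoke Theorem~\ref{Charac com}. The condition $\sup_{\lambda>a}|P_\lambda(\mu)|\le 1$ at $\mu=0$ is trivial. For the density of $\mathrm{Ran}(T-\mu\,\mathrm{Id})$ at $\mu=0$, note that $(CHC)$ for $\{\lambda T\}_{\lambda>a}$ implies in particular that each $\lambda T$ is hypercyclic, hence so is $T$; by Bourdon's theorem \cite{Bourdon}, $\mathrm{Ran}(p(T))$ is dense for every non-zero polynomial $p$, so in particular $\mathrm{Ran}(T)$ is dense in $X$.

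With these checks in place, Theorem~\ref{Charac com} immediately gives the equivalence of $(1)$ with the condition that $\sigma_e(T)\cap\overline{B(0,r_{\mathcal{P}})}\neq\emptyset$, i.e.\ $0\in\sigma_e(T)$, as well as the equivalence with $(2)$, which yields the corollary. I do not anticipate any real obstacle: the work is entirely in unpacking the definition of $r_{\mathcal{P}}$ when the parameter interval is unbounded and confirming that the $\mu=0$ hypothesis of Theorem~\ref{Charac com} is automatic under $(CHC)$.
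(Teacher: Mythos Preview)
Your proposal is correct and follows exactly the paper's approach: specialize Theorem~\ref{Charac com} to $P_\lambda(z)=\lambda z$ with $\Lambda=\,]a,\infty[$, observe that $r_{\mathcal{P}}=0$ so $\overline{B(0,r_{\mathcal{P}})}=\{0\}$, and then verify the hypotheses of Theorem~\ref{Charac com} at $\mu=0$. One small slip to fix: hypercyclicity of $\lambda T$ does \emph{not} imply hypercyclicity of $T$ (think of $2B$ versus $B$ on $\ell^2$), so the clause ``hence so is $T$'' is unjustified; however, you only need $\mathrm{Ran}(T)$ dense, and this follows immediately since $\mathrm{Ran}(T)=\mathrm{Ran}(\lambda T)$ and $\lambda T$ is hypercyclic---or, as the paper does in the proof of Corollary~\ref{C:a<lambda<b}, apply Bourdon's theorem directly to $\lambda T$.
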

\begin{proof}
If $\mathcal{P}=\{\lambda Id:\lambda\in ]a,\infty[\}$, we have $r_{\mathcal{P}}=0$ and thus $\overline{B(0,r_{\mathcal{P}})}=\{0\}$. We conclude as previously by using Theorem~\ref{Charac com}.
\end{proof}

We note that Corollary~\ref{C:Spectral} applies to several interesting examples by Gallardo and Partington \cite[Section 3]{Gallardo} of families of scalar multiples of either adjoint multiplication operators on the Hardy space, adjoint multiplier operators on weighted $\ell^2$-spaces, adjoint convolution operators on weighted $L^2(0,\infty)$ spaces, or adjoint composition operators on the reduced Hardy space. 
This is because any family $\{ \lambda T \}_{|\lambda | >a}$  satisfying the assumptions of  \cite[Theorem 2.1]{Gallardo} must satisfy (CHC).  We illustrate with the following.

\begin{example}
Suppose $\varphi\in H^\infty(\mathbb{D})$ is univalent and bounded below on $\mathbb{T}$ but is not an outer function. Suppose that zero belongs to the boundary of $\varphi (\mathbb{D})$.
Then 
\[
\{ \lambda M_\varphi^* \}_{ | \lambda | > a }
\]
has a common hypercyclic subspace on $H^2(\mathbb{D})$, where $a=\| \frac{1}{\varphi } {\|}_{ L_{ \mathbb{T} }^\infty }$ .
\end{example}
\begin{proof} We know that the spectrum of $M_\varphi^*$ is the closure of $\{ \overline{\varphi (z) }: \ z\in \mathbb{D} \}$ and that its essential spectrum is
$
\sigma_e(M_\varphi^*)=\partial \{ \overline{\varphi (z) }: \ z\in \mathbb{D} \}
$
thanks to $\varphi$ being univalent, see \cite[Proposition 4.4(b)]{Godefroy}. Hence the assumption
 $0\in \partial \varphi (\mathbb{D})$ gives that  $0\in \sigma_{e}(M_\varphi^*)$, and the conclusion follows by Corollary~\ref{C:Spectral}.
\end{proof}

\end{document}